\newtheorem{theorem}{Theorem}[section]
\newtheorem{corollary}[theorem]{Corollary}
\newtheorem{lemma}[theorem]{Lemma}
\newtheorem{proposition}[theorem]{Proposition}
\theoremstyle{definition}
\newtheorem{definition}[theorem]{Definition}
\newtheorem{remark}[theorem]{Remark}
\newcommand{\R}{\mathbb{R}}
\newcommand{\Z}{\mathbb{Z}}
\newcommand{\C}{\mathbb{C}}
\begin{document}

\title[Resolvent estimates for Maxwell's equations]{Resolvent estimates for time-harmonic Maxwell's equations in the partially anisotropic case}

\author{Robert Schippa}
\address{Fakult\"at f\"ur Mathematik, Karlsruher Institut f\"ur Technologie,
Englerstrasse 2, 76131 Karlsruhe, Germany}
\email{robert.schippa@kit.edu}

\begin{abstract}
We prove resolvent estimates in $L^p$-spaces for time-harmonic Max\-well's equa\-tions in two spatial dimensions and in three dimensions in the partially an\-iso\-tro\-pic case. In the two-dimensional case the estimates are sharp up to endpoints. We consider anisotropic permittivity and permeability, which are both taken to be time-independent and spatially homogeneous. For the proof we diagonalize time-harmonic Maxwell's equations to equations involving Half-Laplacians. We apply these estimates to infer a Limiting Absorption Principle in intersections of $L^p$-spaces and to localize eigenvalues for perturbations by potentials.
\end{abstract}

\keywords{resolvent estimates, Maxwell's equations, Limiting Absorption Principle}
\subjclass[2020]{Primary: 47A10, Secondary: 35Q61.}

\maketitle

\section{Introduction and Main Results}

Maxwell's equations describe electromagnetic waves and consequently the propagation of light. We refer to the physics' literature for further query (cf. \cite{LandauLifschitz1990,FeynmanLeightonSands1964}).
Time-dependent Maxwell's equations in media in three spatial dimensions relate
\emph{electric and magnetic field} $ (\mathcal{E},\mathcal{B}):\R \times \R^3 \to  \C^3 \times \C^3$ with
 \emph{displacement and magnetizing fields} $(\mathcal{D},\mathcal{H}):\R \times \R^3 \to \C^3 \times \C^3$, the \emph{electric and magnetic current} $(\mathcal{J}_e,\mathcal{J}_m): \R \times \R^3 \to \C^3 \times \C^3 $, and \emph{electric and magnetic charges} $(\rho_e,\rho_m): \R \times \R^3 \to \C \times \C$:
\begin{equation}
\label{eq:Maxwell3D}
\left\{ \begin{array}{cl}
\partial_t \mathcal{D} &= \nabla \times \mathcal{H} + \mathcal{J}_e, \qquad \nabla \cdot \mathcal{D} = \rho_e, \quad \nabla \cdot \mathcal{B} = \rho_m, \\
\partial_t \mathcal{B} &= - \nabla \times \mathcal{E} + \mathcal{J}_m.
\end{array} \right.
\end{equation}
In physical contexts, fields, currents and charges are real-valued, and the magnetic charge and current vanish. We consider possibly non-vanishing magnetic charge and current to highlight symmetry between the electric and magnetic field. Moreover, $\mathcal{J}_e$ and $\mathcal{J}_m$ are typically taken with opposite signs.

In the following we consider the time-harmonic, monochromatic ansatz
\begin{equation}
\label{eq:TimeHarmonicAnsatz}
\begin{split}
\mathcal{D}(t,x) &= e^{i \omega t} D(x), \quad \mathcal{H}(t,x) = e^{i \omega t} H(x), \\
 \mathcal{J}_e(t,x) &= e^{i \omega t} J_{e}(x), \quad \mathcal{J}_m(t,x) = e^{i \omega t} J_{m}(x)
 \end{split}
\end{equation}
with $\omega \in \R$. We supplement \eqref{eq:Maxwell3D} with the material laws 
\begin{equation}
\label{eq:MaterialLaws}
\mathcal{D}(t,x) = \varepsilon \mathcal{E}(t,x), \quad \mathcal{B}(t,x) = \mu \mathcal{H}(t,x),
\end{equation}
where $\varepsilon = \text{diag}(\varepsilon_1,\varepsilon_2,\varepsilon_3) \in \R^{3 \times 3}, \; \varepsilon_i, \; \mu \in \R_{> 0}$. Requiring $\varepsilon$ and $\mu$ to be symmetric and positive definite is a physically natural assumption. 
The fully anisotropic case
\begin{equation*}
\varepsilon = \text{diag}(\varepsilon_1,\varepsilon_2,\varepsilon_3), \quad \mu = \text{diag}(\mu_1,\mu_2,\mu_3)  \text{ with } \frac{\varepsilon_1}{\mu_1} \neq \frac{\varepsilon_2}{\mu_2} \neq \frac{\varepsilon_3}{\mu_3} \neq \frac{\varepsilon_1}{\mu_1}
\end{equation*}
is analyzed in joint work with R. Mandel \cite{MandelSchippa2021}, where we argue in detail how the analysis reduces in the general case to scalar $\mu$ (see also \cite[p.~63]{Liess1991}).
 Material laws with scalar $\mu$ are frequently used in optics (cf. \cite[Section~2]{MoloneyNewell1990}). Then \eqref{eq:Maxwell3D} becomes under \eqref{eq:TimeHarmonicAnsatz} and \eqref{eq:MaterialLaws} to relate $E$ with $D$ and $H$ with $B$:
\begin{equation}
\label{eq:Maxwell3DConcise}
P(\omega,D) 
\begin{pmatrix}
D \\ B
\end{pmatrix}
 = \begin{pmatrix}
 J_e \\ J_m
 \end{pmatrix} , \quad P(\omega,D) = 
\begin{pmatrix}
i \omega & - \mu^{-1} \nabla \times \\
\nabla \times (\varepsilon^{-1} \cdot) & i \omega
\end{pmatrix}
.
\end{equation}
\eqref{eq:TimeHarmonicAnsatz} can be explained by considering \eqref{eq:Maxwell3D} under Fourier transforms in time: Letting
\begin{equation*}
\mathcal{D}(t,x) = \frac{1}{2 \pi} \int_{\R} e^{i \omega t} D(\omega,x) d\omega, \quad \mathcal{H}(t,x) = \frac{1}{2 \pi} \int_{\R} e^{i \omega t} H(\omega,x) d\omega, \ldots,
\end{equation*}
we find a solution to \eqref{eq:Maxwell3D} provided that $D(\omega,\cdot)$,... solve \eqref{eq:Maxwell3DConcise}. We focus on solenoidal currents, but shall also consider the effect of non-vanishing divergence. We deduce from the continuity equation for electric charges $\partial_t \rho_e(t,x) - \nabla \cdot \mathcal{J}_e(t,x) = 0$ the following relation between $J_e(\omega,\cdot)$ and the time-dependent charges:
\begin{equation*}
\nabla \cdot J_e(\omega,x) = i \omega \int_{\R} e^{-i \omega t} \rho_e(t,x) dt.
\end{equation*}
Since $\omega$ will be fixed in the following analysis of the time-harmonic equation, we let
\begin{equation}
\label{eq:3DCharges}
\rho_e(x) = \nabla \cdot J_e(x) \text{ and } \rho_m(x) = \nabla \cdot J_m(x).
\end{equation}

\medskip

We consider Maxwell's equations in two spatial dimensions and the partially anisotropic case in three dimensions. The time-dependent form of Maxwell's equations in two dimensions corresponds to electric and magnetic fields and currents of the form
\begin{align*}
\mathcal{E}_i(t,x) &= \mathcal{E}_i(t,x_1,x_2), \quad i=1,2; \quad \mathcal{E}_3 = 0; \\
\mathcal{B}_i &= 0, \quad i=1,2; \quad \mathcal{B}_3(t,x) = \mathcal{B}_3(t,x_1,x_2); \\
\mathcal{J}_{ei}(t,x) &= \mathcal{J}_{ei}(t,x_1,x_2), \quad i=1,2; \quad \mathcal{J}_{e3} = 0; \\
\mathcal{J}_{mi}(t,x) &= 0, \quad i=1,2; \quad \mathcal{J}_{m3}(t,x) = \mathcal{J}_{m3}(t,x_1,x_2).
\end{align*}
\eqref{eq:Maxwell3D} simplifies to (cf. \cite{2dMaxwellProposal}):
\begin{equation}
\label{eq:Maxwell2D}
\left\{ \begin{array}{cl}
\partial_t \mathcal{D} &= \nabla_{\perp} \mathcal{H} + \mathcal{J}_e, \quad  \nabla \cdot \mathcal{D}= \rho_e, \\
\partial_t \mathcal{B} &= - \nabla \times \mathcal{E} + \mathcal{J}_m,
\end{array} \right.
\end{equation}
where $\mathcal{D},\mathcal{E},\mathcal{J}_e:\R \times \R^2 \to \C^2$, $\mathcal{B},\mathcal{H},\mathcal{J}_m: \R \times \R^2 \to \C$, $\nabla_\perp = (\partial_2,-\partial_1)^t$, and we assume \eqref{eq:MaterialLaws} with $\mu > 0$, and $(\varepsilon^{ij})_{i,j} \in \R^{2 \times 2}$ denoting a symmetric,  positive definite matrix. We can rewrite \eqref{eq:Maxwell2D} under \eqref{eq:TimeHarmonicAnsatz} and \eqref{eq:MaterialLaws} as
\begin{equation}
\label{eq:Maxwell2dConcise}
P(\omega,D) \begin{pmatrix}
D \\ B
\end{pmatrix}
= \begin{pmatrix}
J_e \\ J_m
\end{pmatrix}, \quad
P(\omega,D) =
\begin{pmatrix}
i \omega & 0 & - \mu^{-1} \partial_2 \\
0 & i \omega & \mu^{-1} \partial_1 \\
 \partial_1 \varepsilon_{21} - \partial_2 \varepsilon_{11} &  \partial_1 \varepsilon_{22} - \partial_2 \varepsilon_{12} & i \omega
\end{pmatrix},
\end{equation}
denoting with $\varepsilon_{ij}$ the components of the inverse of $\varepsilon$. In two dimensions, we let 
\begin{equation}
\label{eq:Charges2D}
\rho_e = \partial_1 J_e + \partial_2 J_e \text{ and } \rho_m = 0.
\end{equation}

\vspace*{0.5cm}

In the following let $d \in \{2,3\}$, $m(2) = 3$, $m(3) = 6$, and
\begin{align*}
L_0^p(\R^2) &= \{ (f_1,f_2,f_3) \in L^p(\R^2)^3 \, : \, \partial_1 f_1 + \partial_2 f_2 = 0 \text{ in } \mathcal{S}'(\R^2) \}, \\
L_0^p(\R^3) &= \{ (f_1,\ldots,f_6) \in L^p(\R^3)^6 \, : \, \nabla \cdot (f_1,f_2,f_3) = \nabla \cdot( f_4,f_5,f_6) = 0 \text{ in } \mathcal{S}'(\R^3) \}.
\end{align*}

In this paper we are concerned with the resolvent estimates
\begin{equation}
\label{eq:ResolventEstimates}
\| (D,B) \|_{L_0^q(\R^d)} =  \| P(\omega, D)^{-1} (J_{e},J_{m}) \|_{L_0^q(\R^d)} \lesssim \kappa_{p,q}(\omega) \| (J_{e},J_{m}) \|_{L_0^p(\R^d)}.
\end{equation}

However, as will be clear from perceiving $P(\omega,D)$ as a Fourier multiplier,\\ $P(\omega,D)^{-1}$ cannot even be understood in the distributional sense for $\omega \in \R$. The remedy will be to consider $\omega \in \C \backslash \R$ and prove estimates independent of the distance to the real axis. Then we can consider limits $\Im (\omega) \downarrow 0$ and $\Im (\omega) \uparrow 0$. This is presently referred to as Limiting Absorption Principle (LAP) in the $L^p$-$L^q$-topology. Moreover, the analysis yields explicit formulae for the resulting limits. It appears that this is the first contribution to resolvent estimates for the Maxwell operator in anisotropic media in the $L^p$-$L^q$-topology.

\medskip

Recently, Cossetti--Mandel analyzed the isotropic\footnote{In the isotropic case we identify $\varepsilon = \lambda 1_{3 \times 3}$ with $\lambda \in \R_{>0}$ and do likewise for $\mu$.}, possibly spatially inhomogeneous case $\varepsilon, \mu \in W^{1,\infty}(\R^3; \R_{>0})$ in \cite{CossettiMandel2021}. In the isotropic case, iterating \eqref{eq:Maxwell3D} and using the divergence conditions yields Helmholtz-like equations for $D$ and $H$. This approach was carried out in \cite{CossettiMandel2021}. In the anisotropic case this strategy becomes less straight-forward. Instead we choose to diagonalize the Fourier multiplier to get into the position to use resolvent estimates for the fractional Laplacian. Kwon--Lee--Seo \cite{KwonLeeSeo2021} previously used a diagonalization to prove resolvent estimates for the Lam\'e operator. However, there are degenerate components in the diagonalization of time-harmonic Maxwell's operators, which do not occur for the Lam\'e operator. We use the divergence condition to ameliorate the contribution of the degeneracies. In case the currents have non-vanishing divergence, we can quantify this contribution with the charges.

\medskip

We digress for a moment to elaborate on $L^p$-$L^q$-estimates for the fractional Laplacian and applications. Let $s \in (0,d)$. For $\omega \in \C \backslash [0,\infty)$ we consider the resolvents as Fourier multiplier:
\begin{equation}
\label{eq:FourierMultiplier}
((-\Delta)^{s/2} - \omega)^{-1}f = \frac{1}{(2 \pi)^d} \int_{\R^d} \frac{\hat{f}(\xi)}{ \|\xi \|^s - \omega } e^{i x. \xi} d\xi
\end{equation}
for $f: \R^d \to \C$ in some suitable a priori class, e.g., $f \in \mathcal{S}(\R^d)$. In the present context, resolvent estimates for the Half-Laplacian $\|((-\Delta)^{\frac{1}{2}}-\omega)^{-1} \|_{p \to q}$ are most important. There is a huge body of literature on resolvent estimates for the Laplacian $(-\Delta - \omega)^{-1}:L^p(\R^d) \to L^q(\R^d)$. This is due to versatile applications to uniform Sobolev estimates and unique continuation (cf. \cite{KenigRuizSogge1987}), the localization of eigenvalues for Schr\"odinger operators with complex potential (cf. \cite{Cuenin2017,Frank2011,Frank2018}), or LAPs in $L^p$-spaces (cf. \cite{Gutierrez2004}). Kenig--Ruiz--Sogge \cite{KenigRuizSogge1987} showed that uniform resolvent estimates in $\omega \in \C \backslash [0,\infty)$ for $d \geq 3$ hold if and only if
\begin{equation}
\label{eq:UniformBoundedness}
\frac{1}{p} - \frac{1}{q} = \frac{2}{d} \text{ and } \frac{2d}{d+3} < p < \frac{2d}{d+1}.
\end{equation}
By homogeneity and scaling, we find
\begin{equation}
\label{eq:ScalingResolvent}
\| (-\Delta - \omega)^{-1} \|_{p \to q} = |\omega|^{-1+\frac{d}{2} \big( \frac{1}{p} - \frac{1}{q} \big)} \| \big( - \Delta - \frac{\omega}{|\omega|} \big)^{-1} \|_{p \to q} \quad \forall \omega \in \mathbb{C} \backslash [0, \infty).
\end{equation}
Thus, it suffices to consider $|\omega| = 1$ to discuss boundedness. Kwon--Lee \cite{KwonLee2020} showed the currently widest range of resolvent estimates for the fractional Laplacian outside the uniform boundedness range (see \cite{HuangYaoZheng2018} for a previous contribution). To state the range of admissible $L^p$-$L^q$-estimates, we shall use notations from \cite{KwonLee2020}.
Let $I^2 = \{(x,y) \in \R^2 \, | \, 0 \leq x,y \leq 1 \}$, and let $(x,y)^\prime = (1-x,1-y)$ for $(x,y) \in I^2$. For $\mathcal{R} \subseteq I^2$ we set $\mathcal{R}^\prime = \{ (x,y)^\prime \, | \, (x,y) \in \mathcal{R} \}$.\\
The resolvent of the fractional Laplacian $((-\Delta)^{\frac{s}{2}} - z)^{-1}$ is bounded for fixed $z \in \C \backslash [0,\infty)$ if and only if $(1/p,1/q) \in \mathcal{R}_0^{\frac{s}{2}}$ with
\begin{equation*}
\mathcal{R}_0^{\frac{s}{2}} = \mathcal{R}_0^{\frac{s}{2}}(d) = \{(x,y) \in I^2 \, | \, 0 \leq x-y \leq \frac{s}{d} \} \backslash \{(1,\frac{d-s}{d}), (\frac{s}{d},0) \};
\end{equation*}
see, e.g., \cite[Proposition~6.1]{KwonLee2020}. Guti\'{e}rrez showed in \cite{Gutierrez2004} that uniform estimates for $\omega \in \{ z \in \C \, : \, |z| = 1, \, z \neq 1 \}$ hold if and only if $(1/p,1/q)$ lies in the set
\begin{equation}
\label{eq:UniformBoundednessII}
\mathcal{R}_1 = \mathcal{R}_1(d) = \{ (x,y) \in \mathcal{R}^1_0(d) \, : \, \frac{2}{d+1} \leq x-y \leq \frac{2}{d}, \, x > \frac{d+1}{2d}, \, y < \frac{d-1}{2d} \}.
\end{equation}
Failure outside this range was known before (cf. \cite{KenigRuizSogge1987,Boerjeson1986}) due to the connection to Bochner-Riesz operators with negative index. Clearly, there are more estimates available outside $\mathcal{R}_1$ if one allows for dependence on $\omega$, e.g.,
\begin{equation*}
\| (-\Delta - \omega)^{-1} \|_{L^2 \to L^2} \sim \text{dist}(\omega,[0,\infty))^{-1}.
\end{equation*}
Kwon--Lee \cite{KwonLee2020} analyzed estimates outside the uniform boundedness range in detail and covered a wide range. Estimates with dependence on $\omega$ can be used to localize eigenvalues for Schr\"o\-din\-ger operators with complex potentials (cf. \cite{Cuenin2017}), which is done for Maxwell operators in Section \ref{section:Localization}.

\medskip

Diagonalizing the symbol of \eqref{eq:Maxwell3DConcise} to operators involving the Half-Laplacian works in the \textit{partially anisotropic case}, i.e.,
\begin{equation}
\label{eq:PartiallyAnisotropicCondition}
\# \{\varepsilon_1,\varepsilon_2,\varepsilon_3\} \leq 2.
\end{equation}
This includes the isotropic case $\varepsilon_1 = \varepsilon_2 = \varepsilon_3$, for which the results of Cossetti--Mandel \cite{CossettiMandel2021} are recovered for constant coefficients, albeit via a different approach.
It turns out that in the fully anisotropic case
\begin{equation*}
\varepsilon_1 \neq \varepsilon_2 \neq \varepsilon_3 \neq \varepsilon_1,
\end{equation*}
diagonalizing the multiplier introduces singularities, and this case has to be treated differently (cf. \cite{MandelSchippa2021}). The estimates proved in \cite{MandelSchippa2021} for the fully anisotropic case are strictly weaker than in the partially anisotropic case. We connect resolvent bounds for the Maxwell operator with resolvent estimates for the Half-Laplacian:
\begin{theorem}
\label{thm:ResolventEstimateMaxwell}
Let $1 < p,q < \infty$, $d \in \{2,3\}$, and $\omega \in \mathbb{C} \backslash \R$. Let $\varepsilon \in \R^{d\times d}$ denote a symmetric positive definite matrix, and let $P(\omega,D)$ as in \eqref{eq:Maxwell2dConcise} for $d=2$, and as in \eqref{eq:Maxwell3DConcise} for $d=3$. For $d=3$, we assume that $\varepsilon=\text{diag}(\varepsilon_1,\varepsilon_2,\varepsilon_3)$ and satisfies \eqref{eq:PartiallyAnisotropicCondition}.\\
Then, $P(\omega,D)^{-1}:L^p_0(\R^d) \to L^q_0(\R^d)$ is bounded if and only if $(1/p,1/q) \in \mathcal{R}_0^{\frac{1}{2}}(d)$, and we find the estimate
\begin{equation}
\label{eq:ResolventEquivalence}
\| P(\omega,D)^{-1} \|_{L^p_0 \to L^q_0} \sim \| ((-\Delta)^{\frac{1}{2}} - \omega)^{-1} \|_{L^p \to L^q} + \| ((-\Delta)^{\frac{1}{2}} + \omega)^{-1} \|_{L^p \to L^q}
\end{equation}
to hold. \\
If $1 \leq p \leq \infty $ and $1 < q < \infty$, then we find the estimate
\begin{equation}
\label{eq:ResolventEstimateAbove}
\begin{split}
&\quad \| P(\omega,D)^{-1} (J_e,J_m) \|_{L^q}  \\
&\lesssim (\| ((-\Delta)^{\frac{1}{2}} - \omega)^{-1} \|_{L^p \to L^q} + \| ((-\Delta)^{\frac{1}{2}} + \omega)^{-1} \|_{L^p \to L^q}) \|(J_e,J_m)\|_{L^p} \\
&\quad + \| (-\Delta)^{-\frac{1}{2}} \rho_e \|_{L^q} + \| (-\Delta)^{-\frac{1}{2}} \rho_m \|_{L^q}
\end{split}
\end{equation}
to hold with $\rho_e$ and $\rho_m$ defined as in \eqref{eq:Charges2D} for $d=2$ and \eqref{eq:3DCharges} for $d=3$. If $\rho_e = \rho_m = 0$, $1<p<\infty$, and $q \in \{1,\infty\}$, then \eqref{eq:ResolventEstimateAbove} also holds.
\end{theorem}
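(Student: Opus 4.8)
\subsection*{Proof proposal}

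The plan is to compute $P(\omega,D)^{-1}$ explicitly as a matrix-valued Fourier multiplier by Cramer's rule, to split it according to $\xi$-homogeneity into a non-degenerate part --- which, after linear changes of variables, is a finite sum of bounded Mikhlin multipliers composed with the Half-Laplace resolvents $((-\Delta)^{1/2}\mp\omega)^{-1}$ --- and a degenerate part, which is killed on $L^p_0(\R^d)$ by the divergence condition and otherwise accounted for by the charges $\rho_e,\rho_m$.

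First I would diagonalise the symbol. For $d=2$, a direct computation from \eqref{eq:Maxwell2dConcise} gives $\det P(\omega,\xi)=i\omega\,(\mu^{-1}Q(\xi)-\omega^{2})$ with $Q(\xi)=\langle\varepsilon^{-1}\xi^{\perp},\xi^{\perp}\rangle$ a positive definite quadratic form. For $d=3$, writing $M(\xi)v=-\xi\times(\varepsilon^{-1}(\xi\times v))$ and taking a Schur complement in \eqref{eq:Maxwell3DConcise} yields $\det P(\omega,\xi)=-\omega^{2}\,(\mu^{-1}\lambda_{1}(\xi)-\omega^{2})(\mu^{-1}\lambda_{2}(\xi)-\omega^{2})$, where $\lambda_{1},\lambda_{2}$ are the two nonzero eigenvalues of the symmetric positive semidefinite operator $M(\xi)$ (whose kernel is $\R\xi$). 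Under \eqref{eq:PartiallyAnisotropicCondition} the discriminant $(\lambda_{1}-\lambda_{2})^{2}$ of the characteristic polynomial of $M(\xi)$ is the square of a quadratic form, so $\lambda_{1},\lambda_{2}$ are themselves positive definite quadratic forms (one a multiple of $|\xi|^{2}$, the other ellipsoidal) --- this is exactly where the fully anisotropic case breaks down. Hence each factor $\mu^{-1}\lambda_{j}(\xi)-\omega^{2}$ splits as $(a_{j}(\xi)-\omega)(a_{j}(\xi)+\omega)$ with $a_{j}(\xi)=\mu^{-1/2}\lambda_{j}(\xi)^{1/2}=|A_{j}\xi|$ for an invertible real matrix $A_{j}$; since $\omega\in\C\setminus\R$ forces $\omega^{2}\in\C\setminus[0,\infty)$, no factor vanishes on $\R^{d}$, so $P(\omega,\xi)^{-1}$ is a smooth symbol. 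Moreover $(a_{j}(D)\mp\omega)^{-1}$ is, up to conjugation by an invertible linear substitution of the space variable (which changes $L^{r}$-norms only by a fixed multiplicative constant), the resolvent $((-\Delta)^{1/2}\mp\omega)^{-1}$, so its $L^{p}\to L^{q}$ norm is comparable to $\|((-\Delta)^{1/2}\mp\omega)^{-1}\|_{L^{p}\to L^{q}}$.

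Next I would expand $\operatorname{adj}P(\omega,\xi)$ into its $\xi$-homogeneous parts of degrees $0,1,2$ and divide by $\det P(\omega,\xi)$. Using $\frac{1}{a^{2}-\omega^{2}}=\frac{1}{2\omega}(\frac{1}{a-\omega}-\frac{1}{a+\omega})$, $\frac{a}{a^{2}-\omega^{2}}=\frac{1}{2}(\frac{1}{a-\omega}+\frac{1}{a+\omega})$ and $\frac{a^{2}}{a^{2}-\omega^{2}}=1+\frac{\omega^{2}}{a^{2}-\omega^{2}}$ (with $a=a_{j}(\xi)$), one checks that the degree-$0$ and degree-$1$ contributions, together with the $\frac{\omega^{2}}{a^{2}-\omega^{2}}$-part of the degree-$2$ contributions, are bounded Mikhlin multipliers --- such as $\xi/a_{j}(\xi)$, $\xi^{\perp}/a_{j}(\xi)$ and the row symbol $r(\xi)/a_{j}(\xi)$, all homogeneous of degree $0$ and smooth away from the origin --- composed with resolvents $(a_{j}(D)\mp\omega)^{-1}$. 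The only obstruction is the leading $1$ of $\frac{a_{j}^{2}}{a_{j}^{2}-\omega^{2}}$ hitting a degree-$2$ numerator, which leaves a piece merely bounded on $L^{r}$; but in each such occurrence the index pattern contracts $\xi$ against the current, producing $\langle\xi,\widehat{J_{e}}\rangle=-i\,\widehat{\rho_{e}}$ or $\langle\xi,\widehat{J_{m}}\rangle=-i\,\widehat{\rho_{m}}$, so that this piece is a Mikhlin multiplier composed with $(-\Delta)^{-1/2}$ applied to $\rho_{e}$ or $\rho_{m}$, up to a harmless $\omega$-power. Combining these contractions with the residual $\frac{1}{i\omega}$-terms, the decomposition collapses to the clean resolvent pieces plus the $(-\Delta)^{-1/2}\rho$-pieces; since $\rho_{e}=\rho_{m}=0$ on $L^{p}_{0}(\R^{d})$, this yields the upper bound in \eqref{eq:ResolventEquivalence} there, and retaining the charge terms gives \eqref{eq:ResolventEstimateAbove}. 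This is the way the divergence condition ``ameliorates the degeneracies''.

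For the exponents: since every Mikhlin factor $M$ commutes with the scalar resolvents $R$, write $MR=RM$ and put $M$ on the side carrying an exponent in $(1,\infty)$ --- on the input when $q\in\{1,\infty\}$, $1<p<\infty$, and on the output when $p\in\{1,\infty\}$, $1<q<\infty$ --- using $M\colon L^{r}\to L^{r}$ for $1<r<\infty$ (Mikhlin--H\"ormander); the charge terms likewise require $1<q<\infty$ and are absent exactly when $q\in\{1,\infty\}$. For the reverse inequality in \eqref{eq:ResolventEquivalence} and the necessity of $(1/p,1/q)\in\mathcal{R}_{0}^{1/2}(d)$, I would feed $P(\omega,D)^{-1}$ special solenoidal data --- e.g.\ $J_{e}=0$ and $J_{m}$ an arbitrary (divergence-free, if $d=3$) Schwartz field --- and read off from the explicit formula that a fixed output component equals a bounded Mikhlin multiplier with bounded left inverse applied to $\tfrac12((a_{j}(D)-\omega)^{-1}+(a_{j}(D)+\omega)^{-1})J_{m}$, whence $\|(a_{j}(D)-\omega)^{-1}+(a_{j}(D)+\omega)^{-1}\|_{L^{p}\to L^{q}}\lesssim\|P(\omega,D)^{-1}\|_{L^{p}_{0}\to L^{q}_{0}}$; comparing with $\|((-\Delta)^{1/2}\mp\omega)^{-1}\|_{L^{p}\to L^{q}}$ by scaling and invoking the characterization recalled above of the range of boundedness of the Half-Laplace resolvent then yields both directions. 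The main obstacle I anticipate is the bookkeeping of the three-dimensional diagonalization: identifying $\lambda_{1},\lambda_{2}$ and the degenerate direction $\R\xi$ of $M(\xi)$, verifying that the divergence constraint removes the associated $\frac{1}{i\omega}$-degeneracies, and checking that no residual term grows faster than $(-\Delta)^{-1/2}\rho_{e}$, $(-\Delta)^{-1/2}\rho_{m}$; the isotropic sub-case is easier since $\lambda_{1}=\lambda_{2}$, while in the genuinely uniaxial case one must also ensure that neither $a_{1}$ nor $a_{2}$ degenerates.
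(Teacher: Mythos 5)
Your upper bound follows essentially the paper's route: invert the symbol explicitly (the paper does this via the diagonalization of Lemmas \ref{lem:Diagonalization2d} and \ref{lem:DiagonalizationPartiallyAnisotropic}, leading to the same formula $p^{-1}=M^d(\cdots)+M^d_c$ recorded in Propositions \ref{prop:Explicit2d}, \ref{prop:Explicit3d}), use partial fractions to exhibit the resolvents of the two ``Half-Laplacians'' $\sqrt{b}\|D\|$ and $\|D\|_\varepsilon$, treat the zero-homogeneous factors by Mikhlin--H\"ormander, kill the $\tfrac{1}{i\omega}$-block by the divergence condition or express it through $\rho_e,\rho_m$ via $(-\Delta)^{-1/2}$, and handle $p\in\{1,\infty\}$ or $q\in\{1,\infty\}$ by commuting the bounded multipliers onto the reflexive side. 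One claim in this part is wrong as stated: in $d=3$ the zero-homogeneous factors that actually arise (e.g. $\xi_2\xi_3/(\xi_2^2+\xi_3^2)$, $\xi_3/(\xi_2^2+\xi_3^2)^{1/2}$, cf. the entries of $M^3$) are \emph{not} smooth away from the origin --- they are singular along the whole $\xi_1$-axis --- so Mikhlin--H\"ormander does not apply to them directly; as in Lemma \ref{lem:DiagonalizationPartiallyAnisotropic} one must factor them into genuinely smooth zero-homogeneous symbols times Riesz transforms in the two variables $(\xi_2,\xi_3)$. This is exactly the renormalization issue you defer to ``bookkeeping''; it is fixable, but it is not automatic from your phrasing.

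The genuine gap is the lower bound in \eqref{eq:ResolventEquivalence} (and hence the ``only if'' direction). First, from a single output component you obtain at best $\|(a_j(D)-\omega)^{-1}+(a_j(D)+\omega)^{-1}\|_{L^p\to L^q}\lesssim\|P(\omega,D)^{-1}\|_{L^p_0\to L^q_0}$, i.e. a bound on the norm of the \emph{sum} of the two resolvents; the theorem requires the sum of the two \emph{norms}, and there is no general inequality $\|A+B\|\gtrsim\|A\|+\|B\|$, nor do you argue absence of cancellation uniformly in $\omega$ (uniformity matters, since Corollary \ref{cor:ResolventBounds} and Sections \ref{section:LAP}--\ref{section:Localization} rest on the $\omega$-dependence of the constants). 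The paper avoids this by choosing data for which the solution exhibits \emph{both} combinations $e_-+e_+$ and $e_--e_+$ in different components, and then uses the triangle inequality to recover $\|e_-f\|_{L^q}+\|e_+f\|_{L^q}$. Second, your assertion that the relevant component is ``a bounded Mikhlin multiplier with bounded left inverse'' applied to that combination fails in $d=3$: the multipliers occurring in front of $e_\mp f$ are of the type $\mathcal{R}_3$ and $\mathcal{R}_2^2+\mathcal{R}_3^2$, whose symbols vanish on frequency subsets, so they admit no bounded inverse on $L^p(\R^3)$ (and in $d=2$ one needs the two-transform equivalence \eqref{eq:EquivalenceRieszTransforms}, proved by a conic frequency decomposition, not a left inverse of a single multiplier). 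The paper's fix is to take $f$ with Fourier support in a small conic neighbourhood of the $\xi_3$-axis or spherically symmetric $f$, where these operators are bounded below, and then --- this is an essential step, not bookkeeping --- to verify that the Kwon--Lee extremizers (Knapp-type and radial) that realize $\|((-\Delta)^{1/2}\mp\omega)^{-1}\|_{L^p\to L^q}$ can be taken within this special class. Without these two points your argument does not yield the reverse inequality in \eqref{eq:ResolventEquivalence}, and the unboundedness of $P(\omega,D)^{-1}$ for $(1/p,1/q)\notin\mathcal{R}_0^{1/2}(d)$ would require a separate argument for the symmetrized multiplier.
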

We cannot allow for $p\in \{1, \infty\}$ or $q \in \{1,\infty\}$ in the proof of 
\begin{equation*}
\| P(\omega,D)^{-1} \|_{L^p_0 \to L^q_0} \gtrsim \| ((-\Delta)^{\frac{1}{2}} - \omega)^{-1} \|_{L^p \to L^q} + \| ((-\Delta)^{\frac{1}{2}} + \omega)^{-1} \|_{L^p \to L^q}
\end{equation*}
as multiplier bounds for Riesz transforms are involved. It is well-known that the Riesz transforms are bounded on $L^p(\R^d)$, $1<p<\infty$, but neither on $L^1$ nor on $L^\infty$. In the proof of  \eqref{eq:ResolventEstimateAbove} for $\rho_e = \rho_m = 0$, which covers the reverse estimate of the above display, we can overcome this possibly technical issue by arranging the Riesz transforms acting on a reflexive $L^p$-space. Hence, we can allow for either $p \in \{1,\infty\}$ or $q \in \{ 1, \infty\}$. For the sake of simplicity, in Corollary \ref{cor:ResolventBounds} we only consider $1<p,q<\infty$ although \eqref{eq:ResolventEstimateAbove} partially extends to $p \in \{1, \infty\}$ or $q \in \{1, \infty\}$.

\medskip

Coming back to resolvent estimates for the Half-Laplacian, for $d \in \{2,3\}$ and $(1/p,1/q) \in I^2$, define
\begin{equation*}
\gamma_{p,q} = \gamma_{p,q}(d) = \max \{ 0, 1 - \frac{d+1}{2} \big( \frac{1}{p} - \frac{1}{q} \big), \frac{d+1}{2} - \frac{d}{p}, \frac{d}{q} - \frac{d-1}{2} \}.
\end{equation*}
Set
\begin{align*}
\kappa_{p,q}^{(\frac{1}{2})}(\omega) &= |\omega|^{-1 + d \big( \frac{1}{p} - \frac{1}{q} \big) + \gamma_{p,q}} \text{dist}(\omega,[0,\infty))^{-\gamma_{p,q}}, \\
\kappa_{p,q}(\omega) &= |\omega|^{-1 + d \big( \frac{1}{p} - \frac{1}{q} \big) + \gamma_{p,q}} \text{dist}(\omega,\R)^{-\gamma_{p,q}}.
\end{align*}
 Kwon--Lee \cite[Conjecture~3,~p.~1462]{KwonLee2020} conjectured for $(1/p,1/q) \in \mathcal{R}_0^{1/2}(d)$
\begin{equation}
\label{eq:SharpResolventEstimate}
\kappa_{p,q}^{(\frac{1}{2})}(\omega) \sim_{p,q,d} \| ((-\Delta)^{1/2}-\omega)^{-1} \|_{p \to q}.
\end{equation}
They verified the conjecture for $d=2$ and for $d = 3$ in the restricted range in $\tilde{\mathcal{R}}_0^{1/2}(3)$ \cite[Theorem~6.2,~p.~1462]{KwonLee2020}. We refer to \cite{KwonLee2020} for the precise description. For notational convenience, let $\tilde{\mathcal{R}}_0^{1/2}(2) = \mathcal{R}_0^{1/2}(2)$. By invoking the results from \cite{KwonLee2020}, we find the following:
\begin{corollary}
\label{cor:ResolventBounds}
Let $1 < p,q < \infty$, $d \in \{2,3\}$, and $\omega \in \mathbb{C} \backslash \R$. Let $\varepsilon \in \R^{d\times d}$ and $P(\omega,D)$ be as in Theorem \ref{thm:ResolventEstimateMaxwell}. Then we find the following:

\noindent 1. If $d=2$, then
\begin{equation}
\label{eq:ResolventEstimateMaxwell}
\| P(\omega,D)^{-1} \|_{L_0^p(\R^d) \to L^q_0(\R^d)} \sim \kappa_{p,q}(\omega)
\end{equation}
is true for $(1/p,1/q) \in \mathcal{R}_0^{\frac{1}{2}}(2)$.

\medskip

\noindent 2. If $d=3$ with $\varepsilon$ satisfying \eqref{eq:PartiallyAnisotropicCondition}, then \eqref{eq:ResolventEstimateMaxwell} is true for $(1/p,1/q) \in \tilde{\mathcal{R}}_0^{\frac{1}{2}}(3)$.
\end{corollary}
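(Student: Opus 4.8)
The plan is to combine Theorem \ref{thm:ResolventEstimateMaxwell} with the sharp resolvent bounds for the Half-Laplacian from \cite{KwonLee2020}, so the argument is mostly bookkeeping. First I would note that $(1/p,1/q)$ lies in $\tilde{\mathcal R}_0^{1/2}(d)$, which is contained in $\mathcal R_0^{1/2}(d)$ (with $\tilde{\mathcal R}_0^{1/2}(2)=\mathcal R_0^{1/2}(2)$ by convention); hence Theorem \ref{thm:ResolventEstimateMaxwell} applies and \eqref{eq:ResolventEquivalence} gives
\begin{equation*}
\| P(\omega,D)^{-1}\|_{L_0^p(\R^d)\to L_0^q(\R^d)}\sim \|((-\Delta)^{\frac{1}{2}}-\omega)^{-1}\|_{p\to q}+\|((-\Delta)^{\frac{1}{2}}+\omega)^{-1}\|_{p\to q}.
\end{equation*}
It therefore suffices to show that the right-hand side is comparable to $\kappa_{p,q}(\omega)$.

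To this end I would invoke the Kwon--Lee verification of the conjecture \eqref{eq:SharpResolventEstimate}, which holds on $\mathcal R_0^{1/2}(2)$ when $d=2$ and on $\tilde{\mathcal R}_0^{1/2}(3)$ when $d=3$: for every $z\in\C\setminus[0,\infty)$ one has $\|((-\Delta)^{\frac{1}{2}}-z)^{-1}\|_{p\to q}\sim\kappa_{p,q}^{(\frac{1}{2})}(z)$. Applying this with $z=\omega$, and, after writing $((-\Delta)^{\frac{1}{2}}+\omega)^{-1}=((-\Delta)^{\frac{1}{2}}-(-\omega))^{-1}$, with $z=-\omega$ (both admissible since $\omega\notin\R$), and using $|{-\omega}|=|\omega|$ together with $\operatorname{dist}(-\omega,[0,\infty))=\operatorname{dist}(\omega,(-\infty,0])$, the sum above becomes comparable to
\begin{equation*}
|\omega|^{-1+d\left(\frac{1}{p}-\frac{1}{q}\right)+\gamma_{p,q}}\Big(\operatorname{dist}(\omega,[0,\infty))^{-\gamma_{p,q}}+\operatorname{dist}(\omega,(-\infty,0])^{-\gamma_{p,q}}\Big).
\end{equation*}

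The last step is an elementary distance estimate. For $\omega\in\C\setminus\R$ both quantities $\operatorname{dist}(\omega,[0,\infty))$ and $\operatorname{dist}(\omega,(-\infty,0])$ are at least $|\Im\omega|=\operatorname{dist}(\omega,\R)$, while at least one of them equals $|\Im\omega|$ (the one attached to the half-line on the side of the imaginary axis containing $\Re\omega$). Since $\gamma_{p,q}\geq 0$ and $t\mapsto t^{-\gamma_{p,q}}$ is non-increasing on $(0,\infty)$, it follows that
\begin{equation*}
\operatorname{dist}(\omega,\R)^{-\gamma_{p,q}}\leq\operatorname{dist}(\omega,[0,\infty))^{-\gamma_{p,q}}+\operatorname{dist}(\omega,(-\infty,0])^{-\gamma_{p,q}}\leq 2\operatorname{dist}(\omega,\R)^{-\gamma_{p,q}},
\end{equation*}
so the displayed expression is $\sim\kappa_{p,q}(\omega)$ and \eqref{eq:ResolventEstimateMaxwell} follows in both cases. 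I do not expect a genuine obstacle here: the analytic content sits in Theorem \ref{thm:ResolventEstimateMaxwell} and in the Kwon--Lee estimates, and the only points requiring a little care are confirming $\tilde{\mathcal R}_0^{1/2}(3)\subseteq\mathcal R_0^{1/2}(3)$ so that \eqref{eq:ResolventEquivalence} is available there, and the bookkeeping that converts the two one-sided distances coming from $\kappa_{p,q}^{(\frac{1}{2})}(\pm\omega)$ into $\operatorname{dist}(\cdot,\R)$.
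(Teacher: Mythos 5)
Your proposal is correct and follows essentially the same route as the paper, which obtains the corollary precisely by combining the equivalence \eqref{eq:ResolventEquivalence} from Theorem \ref{thm:ResolventEstimateMaxwell} with the Kwon--Lee bounds \eqref{eq:SharpResolventEstimate} applied at $\pm\omega$ on the stated ranges. Your final step converting $\operatorname{dist}(\omega,[0,\infty))^{-\gamma_{p,q}}+\operatorname{dist}(\omega,(-\infty,0])^{-\gamma_{p,q}}$ into $\operatorname{dist}(\omega,\R)^{-\gamma_{p,q}}$ is exactly the bookkeeping the paper leaves implicit, and it is carried out correctly.
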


\medskip

Turning to LAPs, we work with the following notions:
\begin{definition}
Let $d \in \{2,3\}$, $1 \leq p,q \leq \infty$, $\omega \in \R \backslash 0$, and $0 < \delta < 1/2$. We say that a global $L^p_0$-$L^q_0$-LAP holds if $P(\omega \pm i \delta,D)^{-1}: L^p_0(\R^d) \to L^q_0(\R^d)$ are bounded uniformly in $\delta > 0$, and there are operators
$P_{\pm}(\omega) : L_0^p(\R^d) \to L_0^q(\R^d)$ such that
\begin{equation}
\label{eq:GlobalLAP}
P(\omega \pm i \delta,D)^{-1} f \to P_{\pm}(\omega) f \text{ as } \delta \to 0 \text{ in } (\mathcal{S}'(\R^d))^{m(d)}.
\end{equation}

We say that a local $L^p_0$-$L^q_0$-LAP holds if for any $\beta \in C^\infty_c(\R^d )$, $P(\omega \pm i \delta,D)^{-1} \beta(D): L^p_0(\R^d) \to L^q_0(\R^d)$ are bounded uniformly in $\delta > 0$, and there are operators $P^{loc}_{\pm}(\omega): L_0^p(\R^d) \to L_0^q(\R^d)$ such that
\begin{equation}
\label{eq:LocalLAP}
P(\omega \pm i \delta,D)^{-1} \beta(D) f \to P_{\pm}^{loc}(\omega) f \text{ in } \mathcal{S}'(\R^d)^{m(d)}.
\end{equation}
\end{definition}
\begin{remark}
By the explicit formulae for $P(\omega,D)^{-1}$ for $\omega \in \C \backslash \R$ we can also handle currents with non-vanishing divergence as in Theorem \ref{thm:ResolventEstimateMaxwell}. We omitted this discussion for the sake of brevity.
\end{remark}

We observe that $\gamma_{p,q} > 0$ for $p$ and $q$ as in Corollary \ref{cor:ResolventBounds}:
\begin{corollary}
\label{cor:GlobalLAP}
Let $d \in \{2,3\}$. For $1<p,q<\infty$, $(1/p,1/q) \in \tilde{\mathcal{R}}_0^{\frac{1}{2}}(d)$, there is no global $L^p_0$-$L^q_0$-LAP for \eqref{eq:Maxwell2dConcise} or \eqref{eq:Maxwell3DConcise}.
\end{corollary}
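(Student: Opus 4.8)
\textit{Proof proposal.} The plan is to obtain a contradiction directly from the sharp two-sided estimate of Corollary~\ref{cor:ResolventBounds}. Suppose, for some $\omega \in \R \backslash 0$ and some admissible pair with $(1/p,1/q) \in \tilde{\mathcal{R}}_0^{\frac{1}{2}}(d)$, that a global $L^p_0$-$L^q_0$-LAP held for \eqref{eq:Maxwell2dConcise} (if $d=2$) or \eqref{eq:Maxwell3DConcise} (if $d=3$). By definition this forces $\sup_{0<\delta<1/2}\|P(\omega\pm i\delta,D)^{-1}\|_{L^p_0\to L^q_0}<\infty$. Since $\omega\pm i\delta\in\C\backslash\R$ whenever $\delta>0$, I would apply Corollary~\ref{cor:ResolventBounds} with $\omega$ replaced by $\omega\pm i\delta$, noting that $\operatorname{dist}(\omega\pm i\delta,\R)=\delta$, to get
\[
\|P(\omega\pm i\delta,D)^{-1}\|_{L^p_0\to L^q_0}\ \sim\ \kappa_{p,q}(\omega\pm i\delta)\ =\ |\omega\pm i\delta|^{-1+d(1/p-1/q)+\gamma_{p,q}}\,\delta^{-\gamma_{p,q}},
\]
with implied constants depending only on $p,q,d$.

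Next I would record that $\gamma_{p,q}>0$ throughout the range in question, which is the observation stated just before the corollary. Indeed, $(1/p,1/q)\in\tilde{\mathcal{R}}_0^{\frac{1}{2}}(d)\subseteq\mathcal{R}_0^{\frac{1}{2}}(d)$ forces $1/p-1/q\le 1/d$, hence $1-\tfrac{d+1}{2}(1/p-1/q)\ge 1-\tfrac{d+1}{2d}=\tfrac{d-1}{2d}>0$, so $\gamma_{p,q}\ge\tfrac{d-1}{2d}>0$. Since $\omega$ is fixed and nonzero, $|\omega\pm i\delta|\in[\,|\omega|,|\omega|+1\,]$ for $0<\delta<1/2$, so the prefactor $|\omega\pm i\delta|^{-1+d(1/p-1/q)+\gamma_{p,q}}$ is bounded above and below by positive constants independent of $\delta$. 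Therefore
\[
\|P(\omega\pm i\delta,D)^{-1}\|_{L^p_0\to L^q_0}\ \gtrsim_{\omega,p,q,d}\ \delta^{-\gamma_{p,q}}\ \xrightarrow[\ \delta\downarrow 0\ ]{}\ \infty,
\]
which contradicts the uniform boundedness required by a global LAP. This establishes the corollary.

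I do not expect a genuine obstacle here: the statement is essentially a corollary of the sharpness asserted in Corollary~\ref{cor:ResolventBounds}. The only point to be careful about is that one really needs the \emph{lower} bound $\|P(\omega,D)^{-1}\|_{L^p_0\to L^q_0}\gtrsim\kappa_{p,q}(\omega)$, not just the upper bound; this is exactly the content of the equivalence in \eqref{eq:ResolventEstimateMaxwell}, which in turn rests (via Theorem~\ref{thm:ResolventEstimateMaxwell} and the diagonalization to Half-Laplacians) on the sharp lower bounds of Kwon--Lee for $((-\Delta)^{1/2}\mp\omega)^{-1}$. If one wished to avoid invoking the full strength of Corollary~\ref{cor:ResolventBounds}, an alternative would be to test $P(\omega\pm i\delta,D)^{-1}$ directly on an explicit family of solenoidal data adapted to the diagonalized multiplier, reproducing the $\delta^{-\gamma_{p,q}}$ blow-up of the corresponding scalar Half-Laplacian resolvent; but routing the argument through the already-established equivalence keeps the bookkeeping minimal.
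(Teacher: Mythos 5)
Your argument is correct and is essentially the paper's own: the paper likewise deduces the failure of the global LAP from the two-sided (in particular lower) resolvent bound $\|P(\omega\pm i\delta,D)^{-1}\|_{L^p_0\to L^q_0}\sim\kappa_{p,q}(\omega\pm i\delta)$ obtained via the diagonalization to Half-Laplacian resolvents, together with the observation that $\gamma_{p,q}>0$ on the stated range, so the norm blows up like $\delta^{-\gamma_{p,q}}$ as $\delta\downarrow 0$. Your explicit verification that $\gamma_{p,q}\ge\frac{d-1}{2d}>0$ matches the remark preceding the corollary, and no further ingredient is needed.
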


We show a local $L^p_0$-$L^q_0$-LAP for the Maxwell operator in Proposition \ref{prop:LocalLAP}. Roughly speaking, for low frequencies the resolvent estimates are equivalent to resolvent estimates for the Laplacian, and uniform estimates $L^{p_1} \to L^q$ are possible for $(1/p_1,1/q) \in \mathcal{P}(d)$ (see Section \ref{section:LAP}). For the high frequencies, away from the singular set, the multiplier is smooth, but provides merely the smoothing of the Half-Laplacian. We use different $L^{p_2} \to L^q$-estimates for this region. This gives  $L^{p_1} \cap L^{p_2} \to L^q$-estimates, which are uniform in $\omega$ in a compact set away from the origin, and an LAP in the same spaces. The necessity of considering currents in intersections of $L^p$-spaces is shown in Corollary \ref{cor:GlobalLAP}. Below for $s \geq 0$ and $1 < q < \infty$, $W^{s,q}(\R^d)$ denotes the $L^q$-based Sobolev space:
\begin{equation*}
W^{s,q}(\R^d) = \{ f \in L^q(\R^d) : (1-\Delta)^{s/2} f \in L^q \} \text{ and } \| f \|_{W^{s,q}} := \| (1-\Delta)^{s/2} f \|_{L^q}.
\end{equation*}
\begin{theorem}[LAP for Time-Harmonic Maxwell's equations]
\label{thm:LocalLAP}
Let $1 \leq p_1,p_2,q \leq \infty$, and let $d \in \{2,3\}$. If $(1/p_1,1/q) \in \mathcal{P}(d)$, $(1/p_2,1/q) \in \mathcal{R}_0^{\frac{1}{2}}(d)$, then $P(\omega,D)^{-1}: L_0^{p_1}(\R^d) \cap L_0^{p_2}(\R^d) \to L_0^q(\R^d)$ is bounded uniformly for $\omega \in \C \backslash \R$ in a compact set away from the origin. Furthermore, for $\omega \in \R \backslash 0$ there are limiting operators $P_{\pm}(\omega): L_0^{p_1}(\R^d) \cap L_0^{p_2}(\R^d) \to L_0^{q}(\R^d)$ with
\begin{equation*}
P(\omega \pm i \delta, D)^{-1} (J_e,J_m) \to P_{\pm}(\omega) (J_e,J_m) \text{ in } (\mathcal{S}'(\R^d))^{m(d)} \text{ as } \delta \downarrow 0
\end{equation*}
 such that $(D,B) = P_{\pm}(\omega) (J_e,J_m)$ satisfy
\begin{equation}
\label{eq:LimitingOperators}
P(\omega,D) (D,B) = (J_e,J_m) \text{ in } (\mathcal{S}'(\R^d))^{m(d)}.
\end{equation}
Additionally, if $q<\infty$, and $s \in [1,\infty)$, then
\begin{equation}
\label{eq:WeakSolutions}
\| (D,B) \|_{(W^{s,q}(\R^d))^{m(d)}} \lesssim \| (J_e,J_m) \|_{(W^{s-1,q}(\R^d))^{m(d)} \cap L_0^{p_1}(\R^d)}.
\end{equation}
\end{theorem}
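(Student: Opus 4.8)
The plan is to decompose $P(\omega,D)^{-1}$ into low- and high-frequency parts, reduce via the diagonalization of Theorem~\ref{thm:ResolventEstimateMaxwell} to resolvent estimates and limiting absorption for the Half-Laplacian, and then, near the singular sphere, to the classical Helmholtz limiting absorption principle. First I would invoke the proof of Theorem~\ref{thm:ResolventEstimateMaxwell}: on $L_0^p(\R^d)$ the charges vanish, so $P(\omega,D)^{-1}$ is, up to composition with Riesz-transform-type multipliers bounded on every $L^r(\R^d)$ with $1<r<\infty$, a finite combination of the Half-Laplacian resolvents $R_\pm(\omega):=((-\Delta)^{1/2}\mp\omega)^{-1}$ (the degenerate components of the diagonalization contributing only bounded multipliers once the solenoidal conditions are used). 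Then I would fix a compact set $K\subset\C\setminus\R$ with $0\notin\overline{K}$, choose $\chi\in C_c^\infty(\R^d)$ with $\chi\equiv1$ on $\{\|\xi\|\le 3\sup_{z\in K}|z|\}$, and split $R_\pm(\omega)=\chi(D)R_\pm(\omega)+(1-\chi(D))R_\pm(\omega)=:R_\pm^{\mathrm{lo}}(\omega)+R_\pm^{\mathrm{hi}}(\omega)$.

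On $\mathrm{supp}(1-\chi)$ one has $(\|\xi\|\mp\omega)^{-1}=\|\xi\|^{-1}(1\mp\omega/\|\xi\|)^{-1}$, and $(1\mp\omega/\|\xi\|)^{-1}$ is smooth with Mikhlin--H\"ormander bounds uniform for $\omega\in K$; hence $R_\pm^{\mathrm{hi}}(\omega)$ differs from $(1-\chi(D))(-\Delta)^{-1/2}$ by a uniformly bounded zero-order multiplier and therefore maps $W^{s-1,q}(\R^d)\to W^{s,q}(\R^d)$ for $1<q<\infty$, $s\in\R$, and $L^{p_2}(\R^d)\to L^q(\R^d)$ for $(1/p_2,1/q)\in\mathcal{R}_0^{1/2}(d)$, all uniformly in $\omega\in K$ (composition with the Riesz factors preserves these bounds, endpoints $p_2\in\{1,\infty\}$ being handled by routing the Riesz transforms through a reflexive space as in the proof of \eqref{eq:ResolventEstimateAbove}). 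Since $\mathrm{supp}(1-\chi)$ stays away from the spheres $\{\|\xi\|=\mathrm{Re}\,\omega\}$, the family $R_\pm^{\mathrm{hi}}(\omega\pm i\delta)$ is continuous in $\delta\in[0,1/2)$ in these topologies, so its limit as $\delta\downarrow 0$ is immediate.

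The low-frequency part carries the work. For the resolvent whose symbol does not vanish on $\mathrm{supp}\,\chi$ (no singular sphere present) $R_\pm^{\mathrm{lo}}(\omega)$ is again a Mikhlin multiplier uniformly in $\omega\in K$ and is treated as above. For the resolvent whose symbol vanishes on $\{\|\xi\|=\mathrm{Re}\,\omega\}$ I would use $\|\xi\|-\omega=(\|\xi\|^2-\omega^2)/(\|\xi\|+\omega)$ (and its analogue for the other sign) to write $R_\pm^{\mathrm{lo}}(\omega)$ as a smooth compactly supported multiplier, namely multiplication by $\chi(\xi)(\|\xi\|+\omega)$, composed with the frequency-localized Helmholtz resolvent $\chi_1(D)(-\Delta-\omega^2)^{-1}$ where $\chi_1\in C_c^\infty(\R^d)$, $\chi_1\equiv1$ on $\mathrm{supp}\,\chi$; here $\omega^2\in\C\setminus[0,\infty)$ for $\omega\in K$, and as $\delta\downarrow0$ with $\omega$ real the sign in $(\omega\pm i\delta)^2$ selects the correct boundary value. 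By the definition of $\mathcal{P}(d)$ (Section~\ref{section:LAP}), for $(1/p_1,1/q)\in\mathcal{P}(d)$ the operators $\chi_1(D)(-\Delta-(\omega\pm i\delta)^2)^{-1}$ are bounded $L^{p_1}(\R^d)\to L^q(\R^d)$ uniformly for $\omega\in K$, $\delta\in(0,1/2)$, and converge in $\mathcal{S}'(\R^d)$ to the Sokhotski--Plemelj boundary resolvents; transplanting through the bounded smooth factor and the Riesz multipliers yields the uniform bound $L^{p_1}\to L^q$ and the $\mathcal{S}'$-limit for $R_\pm^{\mathrm{lo}}(\omega\pm i\delta)$. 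I expect this transplantation step --- the bookkeeping of which bounded factor multiplies which resolvent so that the Helmholtz LAP and the $\mathcal{R}_0^{1/2}$-estimates apply on the correct side, together with the endpoint handling for $p_i\in\{1,\infty\}$ via reflexivity --- to be the main obstacle; the rest is assembly.

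Summing the pieces, $P(\omega\pm i\delta,D)^{-1}:L_0^{p_1}(\R^d)\cap L_0^{p_2}(\R^d)\to L_0^q(\R^d)$ is bounded uniformly for $\omega\in K$ and $\delta\in(0,1/2)$ --- apply the low-frequency bound to the $L^{p_1}$-norm of the input and the high-frequency bound to the $L^{p_2}$-norm --- and uniform boundedness together with the $\mathcal{S}'$-convergence established on the dense subspace $(\mathcal{S}(\R^d))^{m(d)}\cap L_0^{p_1}\cap L_0^{p_2}$ gives, by density, operators $P_\pm(\omega):L_0^{p_1}\cap L_0^{p_2}\to L_0^q$ with $P(\omega\pm i\delta,D)^{-1}(J_e,J_m)\to P_\pm(\omega)(J_e,J_m)$ in $(\mathcal{S}'(\R^d))^{m(d)}$. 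Passing to the limit in the identity $P(\omega\pm i\delta,D)P(\omega\pm i\delta,D)^{-1}(J_e,J_m)=(J_e,J_m)$ in $\mathcal{S}'$ gives \eqref{eq:LimitingOperators}: the differential operator $P(\omega\pm i\delta,D)$ differs from $P(\omega,D)$ only in its diagonal entry $i(\omega\pm i\delta)$ and acts continuously on $\mathcal{S}'$, while the uniform $L^q$-bound on $P(\omega\pm i\delta,D)^{-1}(J_e,J_m)$ makes that sequence equicontinuous on $\mathcal{S}$, so the pairing passes to the limit. Finally, for \eqref{eq:WeakSolutions} with $q<\infty$ and $s\ge1$: the high-frequency part maps $W^{s-1,q}\to W^{s,q}$, while the low-frequency part maps $L^{p_1}\to L^q$ with output frequency-supported in a fixed ball, hence into $W^{s,q}$ by Bernstein's inequality; since $\|(J_e,J_m)\|_{L^{p_1}}\le\|(J_e,J_m)\|_{(W^{s-1,q}(\R^d))^{m(d)}\cap L_0^{p_1}(\R^d)}$, adding the contributions of the two frequency pieces yields \eqref{eq:WeakSolutions}.
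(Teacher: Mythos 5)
Your plan is correct and follows essentially the same route as the paper: a low/high frequency splitting, the $\mathcal{P}(d)$-bounds for the frequency-localized singular resolvent and its Sokhotsky--Plemelj boundary values (Proposition \ref{prop:OperatorBounds} / Proposition \ref{prop:LocalLAP}) for the low-frequency block, Mikhlin/Bessel-kernel bounds with $(1/p_2,1/q)\in\mathcal{R}_0^{\frac{1}{2}}(d)$ for the high-frequency block, the identity $P(\omega\pm i\delta,D)=P(\omega,D)\mp\delta\,\mathrm{Id}$ to pass to the limit in the equation, and Bernstein/Young at low frequency plus the one-derivative gain at high frequency for \eqref{eq:WeakSolutions}. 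The only deviations are minor: the paper invokes the Kwon--Lee bounds directly for the first-order symbol $\|\xi\|_{\varepsilon'}-\omega$ rather than factoring through the Helmholtz resolvent via $\|\xi\|-\omega=(\|\xi\|^2-\omega^2)/(\|\xi\|+\omega)$, and it settles the $p_1=1$ and $q=\infty$ endpoints of $\mathcal{P}(d)$ (where your reflexivity routing is unavailable, e.g. $(p_1,q)=(1,\infty)$) by Young's inequality exploiting the compact frequency support of the localized piece, i.e. exactly the Bernstein device you already use for \eqref{eq:WeakSolutions}.
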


Previously, Picard--Weck--Witsch \cite{PicardWeckWitsch2001} showed an LAP in weighted $L^2$-spaces (cf. \cite{Agmon1975}). Since the results in \cite{PicardWeckWitsch2001} are proved via Fredholm's Alternative, the frequencies $\omega \in \R \backslash 0$ are assumed not to belong to a discrete set of eigenvalues. In \cite{PicardWeckWitsch2001} $\varepsilon$ and $\mu$ are assumed to be positive-definite and isotropic, but allowed to depend on $x$ as in \cite{CossettiMandel2021}. Pauly \cite{Pauly2006} proved similar results as Picard--Weck--Witsch \cite{PicardWeckWitsch2001} in weighted $L^2$-spaces in the anisotropic case; see also \cite{PaulyThesis,BenArtziNemirovsky1998}. Much earlier, Eidus \cite{Eidus1985} already proved non-existence of eigenvalues of the Maxwell operator provided that $\varepsilon$ and $\mu$ are sufficiently smooth short-range perturbations of the identity and satisfy a repulsivity condition. Recently, D'Ancona--Schnaubelt \cite{DAnconaSchnaubelt2021} proved global-in-time Strichartz estimates from resolvent estimates in weighted $L^2$-spaces. 

\smallskip

It appears that in the present work the role of the Half-Laplacian is explicitly identified for the analysis of the Maxwell operator the first time. We note that in \cite{2dQuasilinearMaxwell,3dQuasilinearMaxwell}, in joint work with R. Schnaubelt, we apply a similar diagonalization to show Strichartz estimates for time-dependent Maxwell's equations with rough coefficients. In these works, due to variable permittivity and permeability, the diagonalization is carried out with pseudo-differential operators, and the present role of the Half-Laplacian is played by the Half-Wave operator. Provided that suitable estimates for the Half-Laplacian with variable coefficients were at disposal, of which the author is not aware, it seems possible that the present approach extends to variable permittivity and permeability as well.
\vspace*{0.5cm}

\textit{Outline of the paper.} In Section \ref{section:Reduction} we diagonalize time-harmonic Maxwell's equations in Fourier space to reduce the resolvent estimates to estimates for the Half-Laplacian. We also give examples for lower resolvent bounds in terms of the Half-Laplacian. In Section \ref{section:LAP} we argue how an LAP fails in $L^p$-spaces, but can be salvaged in intersections of $L^p$-spaces. In Section \ref{section:Localization} we show how the $\omega$-dependent resolvent estimates lead to localization of eigenvalues in the presence of potentials. We postpone technical computations to the Appendix, where we also give explicit solution formulae.

\section{Reduction to resolvent estimates for the Half-Laplacian}
\label{section:Reduction}
 Let $\omega \in \C \backslash \R$. We diagonalize $P(\omega,D)$ defined in \eqref{eq:Maxwell2dConcise} or in \eqref{eq:Maxwell3DConcise} in the partially anisotropic case. We shall see that the transformation matrices are essentially Riesz transforms. This allows to bound the resolvents with estimates for the Half-Laplacian. We will make repeated use of the Mikhlin--H\"ormander multiplier theorem (cf. {\cite[Theorem~6.2.7,~p.~446]{Grafakos2014}}):
\begin{theorem}[Mikhlin--H\"ormander]
\label{thm:MultiplierTheorem}
Let $1<p<\infty$ and $m: \R^n \backslash 0 \to \C$ be a bounded function that satisfies
\begin{equation}
\label{eq:DerivativeBounds}
|\partial^\alpha m(\xi)| \leq D_\alpha |\xi|^{-|\alpha|} \qquad (\xi \in \R^n \backslash 0)
\end{equation}
for $|\alpha| \leq \lfloor \frac{n}{2} \rfloor + 1$. Then, $\mathfrak{m}_p: L^p(\R^n) \to L^p(\R^n)$ given by $f \mapsto (m \hat{f}) \check{\;}$ defines a bounded mapping with
\begin{equation}
\label{eq:LpBoundMultiplier}
\| \mathfrak{m}_p \|_{L^p \to L^p} \leq C_n \max(p,(p-1)^{-1}) (A + \| m \|_{L^\infty}),
\end{equation}
where 
\begin{equation*}
A= \max(D_\alpha, \; |\alpha| \leq \lfloor \frac{n}{2} \rfloor + 1).
\end{equation*}
\end{theorem}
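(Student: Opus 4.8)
The plan is to derive this statement from Calderón--Zygmund theory. \emph{First}, the case $p=2$ is immediate from Plancherel, which gives $\|\mathfrak m_p\|_{L^2\to L^2}=\|m\|_{L^\infty}$. For general $p$ one fixes a radial $\psi\in C_c^\infty(\R^n)$ supported in $\{1/2\le|\xi|\le 2\}$ with $\sum_{j\in\Z}\psi(2^{-j}\xi)=1$ for $\xi\neq 0$, sets $m_j(\xi)=m(\xi)\psi(2^{-j}\xi)$ and $K_j=\check{m_j}$, and interprets the convolution kernel of $\mathfrak m_p$ as $K=\sum_j K_j$ (justified first for $m$ additionally truncated near $0$ and $\infty$, then by a limiting argument). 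The Leibniz rule applied to \eqref{eq:DerivativeBounds} yields $|\partial^\alpha m_j(\xi)|\lesssim A\,2^{-j|\alpha|}$ on $\{|\xi|\sim 2^j\}$ for $|\alpha|\le\lfloor n/2\rfloor+1$.

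\emph{Second}, I would estimate the localized masses $\int_{|x|>2^{-j}R}|K_j|$ and $\int_{|x|>2^{-j}R}|\nabla K_j|$ by pairing against the weight $|x|^{-s}$ and using Plancherel. For an integer $s\le\lfloor n/2\rfloor+1$ one has $\big\|\,|x|^sK_j\big\|_{L^2}^2\lesssim\sum_{|\alpha|=s}\|\partial^\alpha m_j\|_{L^2}^2\lesssim A^2\,2^{-2js}2^{jn}$, since $m_j$ is supported on a ball of volume $\sim 2^{jn}$; then Cauchy--Schwarz, using $2s>n$, gives
\begin{equation*}
\int_{|x|>2^{-j}R}|K_j(x)|\,dx\le\Big(\int_{|x|>2^{-j}R}|x|^{-2s}\,dx\Big)^{1/2}\big\|\,|x|^sK_j\big\|_{L^2}\lesssim A\,R^{n/2-s},
\end{equation*}
while taking $s=0$ gives $\int|K_j|\lesssim A$. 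Applying the same reasoning to $\nabla K_j$, whose Fourier transform $i\xi m_j$ obeys the annular derivative bounds with one extra factor $2^j$, gives $\int_{|x|>2^{-j}R}|\nabla K_j|\lesssim A\,2^jR^{n/2-s}$ and $\int|\nabla K_j|\lesssim A\,2^j$. \emph{This is the step I expect to be the main obstacle}: the hypothesis requires only $\lfloor n/2\rfloor+1$ derivatives rather than the $n+1$ that crude integration by parts would need, precisely because the Cauchy--Schwarz weight $|x|^{-2s}$ is integrable away from the origin exactly when $2s>n$, i.e.\ $s=\lfloor n/2\rfloor+1$ suffices; one must also keep every implied constant dominated by $A+\|m\|_{L^\infty}$, uniformly in $j$.

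\emph{Third}, I would verify Hörmander's condition $\sup_{y\neq 0}\int_{|x|>2|y|}|K(x-y)-K(x)|\,dx\lesssim_n A$ by splitting the sum over $j$ at $2^j|y|\sim 1$. For $2^j|y|\le 1$, the mean value theorem and the gradient bounds give $\int_{|x|>2|y|}|K_j(x-y)-K_j(x)|\,dx\lesssim |y|\,2^j(2^j|y|)^{n/2-s}$, a convergent geometric series in $j$ since $n/2-s<0$; for $2^j|y|\ge 1$, the triangle inequality together with the zeroth-order bounds applied with $2^{-j}R\sim|y|$ gives $\lesssim(2^j|y|)^{n/2-s}$, again summable. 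Both sums are $\lesssim_n A$, so $K$ is a Calderón--Zygmund kernel with constant $\lesssim_n A$.

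\emph{Finally}, the Calderón--Zygmund theorem for an $L^2$-bounded operator with such a kernel furnishes a weak-type $(1,1)$ bound with constant $\lesssim_n(\|m\|_{L^\infty}+A)$; Marcinkiewicz interpolation against the $L^2$ bound yields $\|\mathfrak m_p\|_{L^p\to L^p}\lesssim_n(p-1)^{-1}(\|m\|_{L^\infty}+A)$ for $1<p\le 2$, and since the adjoint multiplier $\overline{m(-\cdot)}$ satisfies the same hypotheses with identical constants, duality promotes this to the factor $p$ for $2\le p<\infty$. Taking the maximum of the two regimes produces \eqref{eq:LpBoundMultiplier}.
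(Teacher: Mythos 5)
The paper does not prove this statement at all --- it is quoted from \cite[Theorem~6.2.7]{Grafakos2014} --- and your argument is precisely the standard Littlewood--Paley/Calder\'on--Zygmund proof given in that reference: dyadic pieces $m_j$, weighted $L^2$ bounds on $K_j$ via Plancherel and Cauchy--Schwarz (this is indeed where only $\lfloor n/2\rfloor+1$ derivatives are needed), H\"ormander's condition, weak $(1,1)$, interpolation and duality. So the route is the right one; however, one step fails as written.

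The problem is the small-scale half of your H\"ormander-condition estimate. For $2^j|y|\le 1$ your bound is $|y|\,2^j(2^j|y|)^{n/2-s}=(2^j|y|)^{1+n/2-s}$, so the exponent governing the sum is $1+n/2-s$, not $n/2-s$. With $s=\lfloor n/2\rfloor+1$ this exponent is $1/2$ for odd $n$, but it is $0$ for even $n$, and then the sum over all $j$ with $2^j|y|\le 1$ diverges; note also that in this regime $(2^j|y|)^{n/2-s}\ge 1$, so the weighted, restricted estimate you invoke is strictly weaker than what you need. The repair is immediate and uses a bound you already derived: for $2^j|y|\le 1$ drop the restriction $|x|>2|y|$ and the weight entirely and estimate $\int_{\R^n}|K_j(x-y)-K_j(x)|\,dx\le |y|\,\|\nabla K_j\|_{L^1}\lesssim A\,2^j|y|$, which sums over $2^j|y|\le 1$ to $\lesssim A$. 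A smaller presentational point: ``taking $s=0$ gives $\int|K_j|\lesssim A$'' does not follow from your displayed inequality (letting $R\to 0$ there makes $R^{n/2-s}\to\infty$); instead split at $|x|=2^{-j}$, use Cauchy--Schwarz with the measure of the ball and $\|K_j\|_{L^2}\lesssim \|m\|_{L^\infty}2^{jn/2}$ on $|x|\le 2^{-j}$, and your weighted bound with $R=1$ outside. With these corrections the remaining steps (Calder\'on--Zygmund weak $(1,1)$ with constant $\lesssim_n A+\|m\|_{L^\infty}$, Marcinkiewicz interpolation, duality via the adjoint multiplier $\overline{m}$) do yield \eqref{eq:LpBoundMultiplier}.
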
 
 
As pointed out in \cite{Grafakos2014}, $m \in C^k(\R^n \backslash 0)$, $k \geq \lfloor \frac{n}{2} \rfloor + 1$ is an $L^p$-multiplier for $1<p<\infty$, if it is zero-homogeneous, i.e., there is $\tau \in \R$ such that for any $\lambda > 0$ and $\xi \neq 0$, we have
\begin{equation}
\label{eq:ZeroHomogeneous}
m(\lambda \xi) = \lambda^{i \tau} m(\xi).
\end{equation}

Differentiating the above display with respect to $\xi$, we obtain for $\lambda > 0$
\begin{equation*}
\lambda^{|\alpha|} (\partial_\xi^\alpha m)(\lambda \xi) = \lambda^{i \tau} \partial_\xi^\alpha m(\xi)
\end{equation*}
and \eqref{eq:DerivativeBounds} is satisfied with $D_\alpha = \sup_{|\theta| = 1} |\partial^\alpha m(\theta)|$.

\subsection{Proof of Theorem \ref{thm:ResolventEstimateMaxwell} for $d=2$} 
\label{subsection:2d}
  Let $u = (D_1,D_2,B)$. We denote $(\varepsilon^{-1})_{ij} = (\varepsilon_{ij})_{i,j}$. To reduce to estimates for the Half-Laplacian, we diagonalize the symbol associated with the operator defined in \eqref{eq:Maxwell2dConcise}. We write $\xi = (\xi_1,\xi_2) \in \R^2$:
 \begin{equation}
 \label{eq:MaxwellMultiplier}
(P(\omega,D) u) \widehat (\xi) = p(\omega,\xi) \hat{u}(\xi) = i
\begin{pmatrix}
\omega & 0 & -\xi_2 \mu^{-1} \\
0 & \omega & \xi_1 \mu^{-1} \\
\xi_1 \varepsilon_{12} - \xi_2 \varepsilon_{11} &  \xi_1 \varepsilon_{22} - \xi_2 \varepsilon_{12} & \omega
\end{pmatrix}
\hat{u}(\xi)
.
 \end{equation}
Let $\| \xi \|^2_{\varepsilon^\prime} = \langle \xi, \mu^{-1} \det(\varepsilon)^{-1} \varepsilon \xi \rangle $, $\xi^\prime = \xi / \| \xi \|_{\varepsilon^\prime}$, and define
\begin{equation}
\label{eq:FractionalLaplacianResolvent}
e_{\pm}(\omega,D): L^p(\R^2) \to L^q(\R^2), \quad (e_{\pm} f) \widehat (\xi) = \frac{1}{\omega \pm  \| \xi \|_{\varepsilon^\prime}} \hat{f}(\xi).
\end{equation}
We have the following lemma on diagonalization:
\begin{lemma}
\label{lem:Diagonalization2d}
For almost all $\xi \in \R^2$ there is a matrix $m(\xi) \in \C^{3 \times 3}$ such that 
\begin{equation*}
p(\omega,\xi) = m(\xi) d(\omega,\xi) m^{-1}(\xi)
\end{equation*}
with
\begin{equation}
\label{eq:Eigenvalues}
d(\omega,\xi) = i \text{diag}(\omega,\omega - \| \xi \|_{\varepsilon^\prime}, \omega + \| \xi \|_{\varepsilon^\prime}).
\end{equation}
Furthermore, the operators $m_{ij}(D)$ and $m^{-1}_{ij}(D)$ are $L^p$-bounded for $1<p<\infty$.
\end{lemma}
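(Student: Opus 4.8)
The plan is to diagonalize the $3\times 3$ symbol $p(\omega,\xi)$ in \eqref{eq:MaxwellMultiplier} by explicitly computing its eigenvalues and a corresponding eigenbasis, then verify that the resulting change-of-basis matrices have entries which are zero-homogeneous (degree $0$) in $\xi$ away from the origin, so that the Mikhlin--H\"ormander theorem (Theorem \ref{thm:MultiplierTheorem}) applies. First I would compute $\det(p(\omega,\xi) - i\lambda)$: factoring out the $i$, this amounts to finding the eigenvalues of the matrix
\begin{equation*}
A(\xi) = \begin{pmatrix} 0 & 0 & -\xi_2\mu^{-1} \\ 0 & 0 & \xi_1\mu^{-1} \\ \xi_1\varepsilon_{12}-\xi_2\varepsilon_{11} & \xi_1\varepsilon_{22}-\xi_2\varepsilon_{12} & 0 \end{pmatrix},
\end{equation*}
which is the symbol divided by $i$ with $\omega$ subtracted off the diagonal. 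Expanding the characteristic polynomial, the $2\times 2$ zero block forces one eigenvalue $\lambda=0$ (giving the eigenvalue $i\omega$ of $p$), and the remaining two satisfy $\lambda^2 = \mu^{-1}\big[\xi_2(\xi_1\varepsilon_{12}-\xi_2\varepsilon_{11}) - \xi_1(\xi_1\varepsilon_{22}-\xi_2\varepsilon_{12})\big]$. One checks this bracket equals $-\langle\xi,\varepsilon\xi\rangle/\det\varepsilon$ (using that $\varepsilon_{ij}$ are entries of $\varepsilon^{-1}$ and Cramer's rule), so $\lambda = \pm\|\xi\|_{\varepsilon'}$ with $\|\xi\|_{\varepsilon'}^2 = \mu^{-1}\det(\varepsilon)^{-1}\langle\xi,\varepsilon\xi\rangle$ as defined. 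This yields exactly the eigenvalues $d(\omega,\xi)$ in \eqref{eq:Eigenvalues}.

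Next I would write down the eigenvectors. For $\lambda=0$ one can take a vector spanning the kernel of the bottom row acting on the first two coordinates, e.g. (up to normalization) $v_0(\xi) = (\xi_1\varepsilon_{22}-\xi_2\varepsilon_{12},\ -(\xi_1\varepsilon_{12}-\xi_2\varepsilon_{11}),\ 0)^t$, which is $1$-homogeneous; dividing by $\|\xi\|$ makes it $0$-homogeneous. For $\lambda = \pm\|\xi\|_{\varepsilon'}$ the eigenvector equations give the first two components proportional to $(-\xi_2\mu^{-1},\ \xi_1\mu^{-1})$ and the third component proportional to $\lambda$; after dividing through by $\|\xi\|$ these too become $0$-homogeneous, with the $\lambda/\|\xi\|$ factor being a bounded $C^\infty$ function of $\xi/\|\xi\|$ since $\|\xi\|_{\varepsilon'}\sim\|\xi\|$ by positive-definiteness of $\varepsilon$. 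Assembling these columns into $m(\xi)$, the zero-homogeneity and smoothness of each entry on $\R^2\setminus 0$ is immediate from the formulas, so by the remark following Theorem \ref{thm:MultiplierTheorem} each $m_{ij}(D)$ is $L^p$-bounded for $1<p<\infty$.

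The step requiring the most care is $m^{-1}(\xi)$: I must check that $\det m(\xi)$ is bounded below (in modulus) by a positive constant times an appropriate power of $\|\xi\|$ uniformly on the sphere, so that $m^{-1}(\xi)$ is again a smooth, zero-homogeneous matrix and hence its entries are also Mikhlin multipliers. Since the three eigenvalues $0, \pm\|\xi\|_{\varepsilon'}$ are distinct for $\xi\ne 0$ (as $\|\xi\|_{\varepsilon'}>0$), the eigenvectors are linearly independent, so $\det m(\xi)\ne 0$ for $\xi\ne 0$; after the $\|\xi\|$-normalizations the matrix $m$ depends only on $\xi/\|\xi\|$, a continuous nonvanishing function on the compact unit circle, hence $|\det m|$ is bounded away from $0$ there. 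By homogeneity this controls $m^{-1}$ everywhere, and one reads off that $m^{-1}_{ij}(\xi)$ is a ratio of zero-homogeneous smooth functions with nonvanishing denominator, hence $0$-homogeneous and smooth on $\R^2\setminus 0$, so Mikhlin--H\"ormander applies to $m^{-1}_{ij}(D)$ as well. The only subtlety is the phrase "for almost all $\xi$": the construction is valid for all $\xi\ne 0$, and the single point $\xi=0$ is a null set, which is harmless for the multiplier statement. This completes the verification of Lemma \ref{lem:Diagonalization2d}.
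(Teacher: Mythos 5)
Your proposal is correct and takes essentially the same route as the paper: the paper simply exhibits the eigenvector matrix $m(\xi)$, notes $\det m(\xi)=-1$, writes out $m^{-1}(\xi)$ explicitly, and invokes Mikhlin--H\"ormander via zero-homogeneity, while you recover the same eigenvalues and eigenvectors from the characteristic polynomial and replace the explicit determinant/inverse computation by the (equally valid) observation that distinct eigenvalues plus compactness of the unit circle bound $|\det m|$ from below. One small slip: the bracket in your characteristic polynomial should be $\xi_1(\xi_1\varepsilon_{22}-\xi_2\varepsilon_{12})-\xi_2(\xi_1\varepsilon_{12}-\xi_2\varepsilon_{11})=+\langle\xi,\varepsilon\xi\rangle/\det\varepsilon$ (you wrote its negative, which would make $\lambda^2<0$), but the eigenvalues $\lambda=\pm\|\xi\|_{\varepsilon'}$ you then state are the correct ones, so the argument stands after fixing the sign.
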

\begin{proof}
It is straight-forward to check that the eigenvalues are as in \eqref{eq:Eigenvalues} with the eigenvectors at hand. We align the corresponding eigenvectors as columns to
 \begin{equation}
 \label{eq:Eigenvectors}
m(\xi) = 
\begin{pmatrix}
 \varepsilon_{22} \xi_1^\prime -\varepsilon_{12} \xi_2^\prime  & -\xi_2^\prime \mu^{-1} & \xi_2^\prime \mu^{-1} \\
 \varepsilon_{11} \xi_2^\prime -\varepsilon_{12} \xi_1^\prime & \xi_1^\prime \mu^{-1} & -\xi_1^\prime \mu^{-1} \\
0 & -1 & -1
\end{pmatrix} 
 \end{equation}
 and note that $\det m(\xi) = - 1$ for $\xi \neq 0$. For the inverse matrix we compute
\begin{equation}
\label{eq:InverseEigenvectors}
m^{-1}(\xi) =
\begin{pmatrix}
 \mu^{-1} \xi_1^\prime & \mu^{-1} \xi_2^\prime & 0 \\
\frac{ \xi_1^\prime \varepsilon_{21} - \xi_2^\prime \epsilon_{11} }{2} & \frac{\varepsilon_{22} \xi_1^\prime - \varepsilon_{21} \xi_2^\prime }{2} & -\frac{1}{2} \\
\frac{\xi_2^\prime \varepsilon_{11} - \xi_1^\prime \varepsilon_{12}}{2} & \frac{ \xi_2^\prime \varepsilon_{12} - \xi_1^\prime \varepsilon_{22}}{2} & - \frac{1}{2}
\end{pmatrix}
.
\end{equation}
$L^p$-boundedness is immediate from Theorem \ref{thm:MultiplierTheorem} because the components of $m$ and $m^{-1}$ are zero-homogeneous and smooth away from the origin.
\end{proof}
In Proposition \ref{prop:Explicit2d} we compute $p^{-1}(\omega,\xi)$ via this diagonalization. The diagonalization allows us to separate
\begin{equation}
\label{eq:Decomposition2d}
p^{-1}(\omega,\xi) = M^2(A,B) + M^2_c
\end{equation}
with $M^2_c v = 0$ for $\xi_1 v_1 + \xi_2 v_2 = 0$ and
\begin{equation}
\label{eq:Constants2d}
A = \frac{1}{i(\omega - \| \xi \|_{\varepsilon'})}, \quad B = \frac{1}{i(\omega + \| \xi \|_{\varepsilon'})}.
\end{equation}
We can finish the proof of Theorem \ref{thm:ResolventEstimateMaxwell} for $d=2$:
\begin{proof}[Proof~of~Theorem~\ref{thm:ResolventEstimateMaxwell},~$d=2$]
We begin with the lower bound in \eqref{eq:ResolventEquivalence}. For $u$ with $\partial_1 u_1 +\partial_2 u_2 = 0$, we have
\begin{equation*}
p^{-1}(\omega,\xi) \hat{u}(\xi) = M^2(A,B) \hat{u}(\xi).
\end{equation*}
The entries of $M^2(A,B)$ are linear combinations of $e_{\pm}(\omega,\xi)$ and $\xi_i'$. The operators 
\begin{equation*}
(\mathcal{R}^{\varepsilon'}_i f) \widehat (\xi) = \xi_i' \hat{f}(\xi)
\end{equation*}
 are $L^p$-bounded for $1<p<\infty$ with a constant only depending on $p,\varepsilon,\mu$ as the symbols are linear combinations of Riesz symbols after changes of variables. 
We find (see \eqref{eq:FractionalLaplacianResolvent} for notations)
\begin{equation}
\label{eq:ResolventEstimateUpperBound}
\| P(\omega,D)^{-1} \|_{L_0^p \to L_0^q} \lesssim \| e_+(\omega,D) \|_{L^p \to L^q} + \| e_-(\omega,D) \|_{L^p \to L^q}
\end{equation}
for $1 \leq p,q \leq \infty$ with $(1 < p < \infty$ or $1<q<\infty)$. The reason we are not required to take $1<p<\infty$ and $1<q<\infty$ is that, if there is one reflexive $L^p$-space, then we can commute the Fourier multipliers after multiplying out the matrices such that the Riesz transforms act on a reflexive $L^p$-space.\footnote{I thank the referee for pointing this out.} This shows the lower bound in \eqref{eq:ResolventEquivalence} for $d=2$.

\medskip

We turn to show the upper bound in \eqref{eq:ResolventEquivalence}, which is
\begin{equation}
\label{eq:EquivalenceHalfLaplacians}
\| P(\omega,D)^{-1} \|_{L_0^p \to L_0^q} \gtrsim \| e_+(\omega,D) \|_{L^p \to L^q} + \| e_-(\omega,D) \|_{L^p \to L^q}
\end{equation}
for $1<p,q<\infty$.

The operators $\mathcal{R}_j^{\varepsilon'}$ satisfy for $1<p<\infty$
\begin{equation}
\label{eq:EquivalenceRieszTransforms}
\| f \|_{L^p(\R^2)} \sim_{p,\varepsilon,\mu} \| \mathcal{R}_1^{\varepsilon^\prime} f \|_{L^p(\R^2)} + \| \mathcal{R}_2^{\varepsilon^\prime} f \|_{L^p(\R^2)}.
\end{equation}  

In fact, as already used above, $\| \mathcal{R}_j^{\varepsilon^\prime} f \|_{L^p} \lesssim_{p,\varepsilon,\mu} \| f \|_{L^p}$ for $1<p<\infty$ as a consequence of Theorem \ref{thm:MultiplierTheorem}. Let $\chi_1$, $\chi_2: \R / (2 \pi \Z) \to [0,1]$ be a smooth partition of unity of the unit circle such that
\begin{equation*}
\left\{ \begin{array}{cl}
\chi_1(\theta) &= 1 \text{ for } \theta \in [-\frac{\pi}{8},\frac{\pi}{8}] \cup [\frac{7 \pi}{8}, \frac{9 \pi}{8}], \\
\chi_2(\theta) &= 1 \text{ for } \theta \in [\frac{3 \pi}{8}, \frac{5 \pi}{8}] \cup [\frac{11 \pi}{8}, \frac{13 \pi}{8}].
\end{array} \right.
\end{equation*}
We extend $\chi_i$ to $\R^2 \backslash 0$ by zero-homogeneity.

\medskip

 For the reverse bound in \eqref{eq:EquivalenceRieszTransforms}, we decompose $f=f_1+f_2$ as $f_i = \chi_i(D) f$. Set $((\mathcal{R}_i^{\varepsilon^\prime})^{-1} f) \widehat (\xi) = \frac{\| \xi \|_{\varepsilon^\prime}}{\xi_i} \hat{f}(\xi)$. Note that $|\xi_i| \gtrsim \| \xi \|_{\varepsilon'}$ for $\xi \in \text{supp}(\hat{f}_i)$. By Theorem \ref{thm:MultiplierTheorem}, we find the estimate
\begin{equation*}
\| \big( \mathcal{R}^{\varepsilon^\prime}_i \big)^{-1} f_i \|_{L^p} \lesssim_{p,\varepsilon,\mu} \| f_i \|_{L^p}.
\end{equation*}
Consequently,
\begin{equation*}
\| f \|_{L^p} \leq \| f_1 \|_{L^p} + \| f_2 \|_{L^p} \leq \sum_{i=1}^2 \| \big( \mathcal{R}^{\varepsilon'}_i \big)^{-1} \mathcal{R}^{\varepsilon'}_i f_i \|_{L^p} \lesssim_{p,\varepsilon,\mu} \sum_{i=1}^2 \| \mathcal{R}^{\varepsilon'}_i f_i \|_p.
\end{equation*}
 
With \eqref{eq:EquivalenceRieszTransforms} in mind, we show \eqref{eq:EquivalenceHalfLaplacians} by considering the  data
\begin{equation}
v =
\begin{pmatrix}
-2 \mathcal{R}_2^{\varepsilon^\prime} f & 2 \mathcal{R}_1^{\varepsilon^\prime} f & 0
\end{pmatrix}^t
.
\end{equation} 
Clearly, $\partial_1 v_1 + \partial_2 v_2 = 0$. We compute
\begin{equation*}
m^{-1}(D) v = \mu
\begin{pmatrix}
0 & 1 & -1
\end{pmatrix}^t
f.
\end{equation*} 
 We further compute
\begin{equation*}
P(\omega,D)^{-1} v = 
\begin{pmatrix}
- \mathcal{R}_2^{\varepsilon^\prime} (e_- + e_+) & \mathcal{R}_1^{\varepsilon'} (e_- + e_+ ) & \mu (- e_- + e_+) 
\end{pmatrix}^t f,
\end{equation*}
and it follows by \eqref{eq:EquivalenceRieszTransforms} 
\begin{equation*}
\begin{split}
\| P(\omega,D)^{-1} v \|_{L^q} &\sim \| (e_-(\omega,D) + e_+(\omega,D)) f \|_{L^q} + \mu \| (e_-(\omega,D) - e_+(\omega,D)) f \|_{L^q} \\
 &\sim \| e_-(\omega,D) f \|_{L^q} + \| e_+(\omega,D) f \|_{L^q}
\end{split}
\end{equation*}
as claimed. Since $\| v \|_{L^p} \sim \| f \|_{L^p}$, by choosing $f$ suitably, we find
\begin{equation*}
\| P(\omega,D)^{-1} \|_{L_0^p \to L_0^q} \gtrsim \max( \| e_- \|_{L^p \to L^q}, \| e_+ \|_{L^p \to L^q} ) \sim \| e_- \|_{L^p \to L^q} + \| e_+ \|_{L^p \to L^q}.
\end{equation*}
Finally, we turn to \eqref{eq:ResolventEstimateAbove}, which reads for $d=2$
\begin{equation}
\label{eq:ResolventEstimateAbove2d}
\begin{split}
&\quad \| P(\omega,D)^{-1} (J_e,J_m) \|_{L^q}  \lesssim (\| e_-(\omega,D) \|_{L^p \to L^q} + \| e_+(\omega,D) \|_{L^p \to L^q}) \|(J_e,J_m)\|_{L^p} \\
&\quad + \| (-\Delta)^{-\frac{1}{2}} \rho_e \|_{L^q}.
\end{split}
\end{equation}
We decompose writing $J = (J_e,J_m)$
\begin{equation*}
P(\omega,D)^{-1} J = (M^2(A,B) \hat{J})^{\vee} + (M_c \hat{J})^{\vee}
\end{equation*}
as in \eqref{eq:Decomposition2d}. The arguments from above estimate the contribution of $(M^2(A,B) \hat{J})^{\vee}$. A computation yields
\begin{equation*}
(M_c \hat{J})(\xi) = 
\begin{pmatrix}
 \varepsilon_{12} \xi_2' -\varepsilon_{22} \xi_1' \\
  \varepsilon_{12} \xi_1' -\varepsilon_{11} \xi_2'\\
0
\end{pmatrix}
\frac{\hat{\rho_e}(\xi)}{\mu \omega \| \xi \|_{\varepsilon'}}
\end{equation*}
with $\rho_e = \partial_1 J_{e1} + \partial_2 J_{e2}$. From this follows
\begin{equation*}
\| (M_c \hat{J})^{\vee} \|_{L^q} \lesssim \| (-\Delta)^{1/2} \rho_e \|_{L^q}
\end{equation*}
by $L^q$-boundedness of $\mathcal{R}_i^{\varepsilon'}$ for $1<q<\infty$ and $\| \xi \|/ \| \xi \|_{\varepsilon'}$ zero-homogeneous and smooth away from the origin. The proof is complete.
\end{proof}

\subsection{Proof of Theorem \ref{thm:ResolventEstimateMaxwell} for $d=3$}
\label{subsection:3d}
We consider $P(\omega,D)$ as in \eqref{eq:Maxwell3DConcise} with $\varepsilon = \text{diag}(\varepsilon_1,\varepsilon_2,\varepsilon_3)$ and $\mu > 0$. Here we consider the partially anisotropic case $a^{-1}= \varepsilon_1 ; \quad \varepsilon_2 = \varepsilon_3 = b^{-1}$ and suppose that $\mu = 1$ without loss of generality, to which we can reduce by linear substitution. The computation also covers the isotropic case $a=b$, which was considered in \cite{CossettiMandel2021}. For $\xi \in \R^3$ we denote
\begin{align*}
\| \xi \|^2 &= \xi_1^2 + \xi_2^2 + \xi_3^2, \quad \| \xi \|^2_\varepsilon = b \xi_1^2 + a \xi_2^2 + a \xi_3^2, \\
\xi' &= \xi / \| \xi \|, \qquad \qquad \quad \; \tilde{\xi} = \xi / \| \xi \|_\varepsilon.
\end{align*}
We write further
\begin{align*}
(\nabla \times u) \widehat{\,} (\xi) = - i \mathcal{B}(\xi) \hat{u}(\xi), \quad \mathcal{B}(\xi) = 
\begin{pmatrix}
0 & \xi_3 & - \xi_2 \\
- \xi_3 & 0 & \xi_1 \\
\xi_2 & -\xi_1 & 0
\end{pmatrix}
.
\end{align*}
We have the following lemma on diagonalization:
\begin{lemma}
\label{lem:DiagonalizationPartiallyAnisotropic}
For almost all $\xi \in \R^3$ there is a matrix $\tilde{m}(\xi) \in \C^{6 \times 6}$ such that
\begin{equation*}
p(\omega,\xi) = \tilde{m}(\xi) d(\omega,\xi) \tilde{m}^{-1}(\xi)
\end{equation*}
with
\begin{equation*}
d(\omega,\xi) = i \, \text{diag}(\omega, \omega, \omega - \sqrt{b} \| \xi \|, \omega + \sqrt{b} \| \xi \|, \omega - \| \xi \|_\varepsilon, \omega + \| \xi \|_\varepsilon).
\end{equation*}
Furthermore, the components of $\tilde{m}$ and $\tilde{m}^{-1}$ are $L^p$-bounded Fourier multipliers for $1<p<\infty$.
\end{lemma}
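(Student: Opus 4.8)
The plan is to diagonalize $p(\omega,\xi)$ explicitly in an orthonormal frame adapted to the distinguished $\xi_1$-axis, and then to read off the multiplier bounds from the resulting block structure. Throughout I use the reduction $\mu=1$, $\varepsilon^{-1}=\text{diag}(a,b,b)$ already in place before the lemma, and $(\nabla\times u)\widehat{\,}=-i\mathcal{B}(\xi)\hat u$, so that $p(\omega,\xi)=i\left(\begin{smallmatrix}\omega I_3 & \mathcal{B}(\xi)\\ -\mathcal{B}(\xi)\varepsilon^{-1} & \omega I_3\end{smallmatrix}\right)$.

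\textbf{Eigenvalues.} Looking for eigenvectors $(v_1,v_2)$ with eigenvalue $i\lambda$: for $\lambda\ne\omega$ the relations $\mathcal{B}(\xi)v_2=(\lambda-\omega)v_1$ and $-\mathcal{B}(\xi)\varepsilon^{-1}\mathcal{B}(\xi)v_2=(\lambda-\omega)^2 v_2$ reduce everything to the eigenproblem for $-\mathcal{B}(\xi)\varepsilon^{-1}\mathcal{B}(\xi)$ on $v_2$. Using $\mathcal{B}(\xi)^2=\xi\xi^t-\|\xi\|^2 I$ and $\varepsilon^{-1}=bI+(a-b)e_1e_1^t$ one obtains $-\mathcal{B}(\xi)\varepsilon^{-1}\mathcal{B}(\xi)=b(\|\xi\|^2 I-\xi\xi^t)+(a-b)\eta\eta^t$ with $\eta:=\mathcal{B}(\xi)e_1=(0,-\xi_3,\xi_2)^t$; this matrix annihilates $\xi$, has eigenvalue $\|\xi\|_\varepsilon^2$ on $\eta$ and eigenvalue $b\|\xi\|^2$ on $\zeta:=\xi\times\eta=(\xi_2^2+\xi_3^2,-\xi_1\xi_2,-\xi_1\xi_3)^t$. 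Together with the two-dimensional eigenspace $\text{span}\{(\varepsilon\xi,0),(0,\xi)\}$ for $\lambda=\omega$, this gives exactly the six diagonal entries of $d(\omega,\xi)$; since $\omega\in\C\setminus\R$ none of them vanishes for $\xi\ne0$, so $p(\omega,\xi)$ is diagonalizable for a.e.\ $\xi$.

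\textbf{The frame and block structure.} The naive eigenvector matrix degenerates on the $\xi_1$-axis, exactly where the branches $\sqrt b\|\xi\|$ and $\|\xi\|_\varepsilon$ collide, so I would not diagonalize in the standard basis. Instead set $\hat\xi=\xi/\|\xi\|$, $\hat\eta=\eta/\rho$, $\hat\zeta=\zeta/(\rho\|\xi\|)$ with $\rho=(\xi_2^2+\xi_3^2)^{1/2}$, let $U_3(\xi)=(\hat\xi\,|\,\hat\eta\,|\,\hat\zeta)\in O(3)$ and $U(\xi)=\text{diag}(U_3,U_3)$. One checks $U_3^t\mathcal{B}(\xi)U_3=\|\xi\|\left(\begin{smallmatrix}0&0&0\\0&0&1\\0&-1&0\end{smallmatrix}\right)$ and $U_3^t\varepsilon^{-1}U_3=bI+(a-b)\left(\begin{smallmatrix}\alpha^2&0&\alpha\gamma\\0&0&0\\\alpha\gamma&0&\gamma^2\end{smallmatrix}\right)$, $\alpha=\xi_1/\|\xi\|$, $\gamma=\rho/\|\xi\|$, so $U^tp(\omega,\xi)U$ involves only $\omega$, $\|\xi\|$ and the bounded homogeneous symbols $\alpha^2,\gamma^2,\alpha\gamma$. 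A simultaneous permutation of the six frame components into the groups $(B_{\hat\xi})$, $(D_{\hat\eta},B_{\hat\zeta})$, $(D_{\hat\xi},D_{\hat\zeta},B_{\hat\eta})$ makes this matrix block-diagonal: a $1\times1$ block $i\omega$; a $2\times2$ block $i\left(\begin{smallmatrix}\omega&\|\xi\|\\b\|\xi\|&\omega\end{smallmatrix}\right)$ with eigenvalues $i(\omega\pm\sqrt b\|\xi\|)$ and \emph{constant} eigenvectors $(1,\pm\sqrt b)^t$; and a $3\times3$ block with characteristic polynomial $(\omega-\lambda)\bigl((\omega-\lambda)^2-\|\xi\|_\varepsilon^2\bigr)$, whose eigenvectors have entries among $1$, $\|\xi\|/\|\xi\|_\varepsilon$ and $(a-b)\xi_1\rho/\|\xi\|_\varepsilon^2$, and whose diagonalizer has determinant comparable to $\|\xi\|/\|\xi\|_\varepsilon$, hence a bounded inverse of the same type. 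Then $\tilde m(\xi)$ is the product of $U(\xi)$, this permutation, and the block-diagonal diagonalizer, $\tilde m^{-1}(\xi)$ the product of the inverses in reverse order, and $d(\omega,\xi)$ the stated diagonal after one further fixed permutation.

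\textbf{Multiplier bounds and the main obstacle.} It remains to check that each entry of $\tilde m$ and $\tilde m^{-1}$ is an $L^p$-multiplier for $1<p<\infty$. The entries coming from the small-block diagonalizers are constants or smooth homogeneous degree-$0$ ratios of $\|\xi\|$ and $\|\xi\|_\varepsilon$, which are bounded above and below away from the origin, so they are covered by Theorem~\ref{thm:MultiplierTheorem}. The entries of $U_3$ and $U_3^t$ are \emph{not} smooth on the $\xi_1$-axis, so Theorem~\ref{thm:MultiplierTheorem} does not apply to them directly; but the components of $\hat\xi$ are the usual Riesz symbols, the nonzero components of $\hat\eta$ are the Riesz symbols $\pm\xi_j/\rho$ in the $(x_2,x_3)$-variables (bounded on $L^p(\R^3)$ by Theorem~\ref{thm:MultiplierTheorem} in $\R^2$ together with Fubini), and the components of $\hat\zeta$ are finite sums of products of these, e.g.\ $\rho/\|\xi\|=\tfrac{\xi_2}{\|\xi\|}\cdot\tfrac{\xi_2}{\rho}+\tfrac{\xi_3}{\|\xi\|}\cdot\tfrac{\xi_3}{\rho}$ and $\xi_1\xi_j/(\rho\|\xi\|)=\tfrac{\xi_1}{\|\xi\|}\cdot\tfrac{\xi_j}{\rho}$. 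Since finite sums of finite products of $L^p$-bounded Fourier multiplier operators are again $L^p$-bounded, every entry of $\tilde m(D)$ and $\tilde m^{-1}(D)$ is $L^p$-bounded. The one genuine obstacle here is the collision of the two nontrivial eigenvalue branches on the optic axis $\{\xi_2=\xi_3=0\}$, which makes a direct eigenvector diagonalization singular; it is resolved by the frame $\{\hat\xi,\hat\eta,\hat\zeta\}$ precisely because that frame is adapted to the axis and its only singularity is of Riesz-transform type — in contrast to the fully anisotropic case, where the branch collision spreads over a curve and cannot be resolved this way.
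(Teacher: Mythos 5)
Your proposal is correct, but it is organized rather differently from the paper's proof. The paper simply exhibits the six eigenvectors $v_1,\dots,v_6$ in the standard basis, observes that the resulting eigenvector matrix $m(\xi)$ degenerates on the $\xi_1$-axis ($|\det m(\xi)|\sim\alpha^4(\xi)$ with $\alpha=(\xi_2^2+\xi_3^2)^{1/2}/(\|\xi\|\|\xi\|_\varepsilon)^{1/2}$), renormalizes the four degenerate columns by $1/\alpha$, writes out $\tilde m^{-1}$ explicitly, and then checks entry by entry that each component is a product of zero-homogeneous symbols and Riesz transforms in the $(x_2,x_3)$-variables. You instead conjugate by the orthonormal frame $U=\mathrm{diag}(U_3,U_3)$, $U_3=(\hat\xi\,|\,\hat\eta\,|\,\hat\zeta)$, derive the spectrum systematically from $-\mathcal{B}(\xi)\varepsilon^{-1}\mathcal{B}(\xi)=b(\|\xi\|^2I-\xi\xi^t)+(a-b)\eta\eta^t$, and reduce to a $1\times1$, a $2\times2$ and a $3\times3$ block whose diagonalizers have constant or benign zero-homogeneous entries; the multiplier bounds for $\tilde m^{\pm1}$ then follow from the factorization $\tilde m=U\Pi S$ without ever computing the full $6\times6$ inverse. (I checked the key identities: $U_3^t\mathcal{B}U_3$, the block structure after your permutation, the characteristic polynomial $(\omega-\lambda)\bigl((\omega-\lambda)^2-\|\xi\|_\varepsilon^2\bigr)$ of the $3\times3$ block, and that its $i\omega$-eigenvector transported back is proportional to $(\varepsilon\xi,0)$, consistent with the paper's $v_2$.) Both arguments rest on the same two facts — the only degeneracy is on the optic axis, and the singular entries there are of two-dimensional Riesz type — and produce the same eigenvectors up to column normalization; what your route buys is that the $L^p$-bounds become essentially automatic (finite sums of compositions of bounded multiplier operators), and your handling of symbols such as $(\xi_2^2+\xi_3^2)^{1/2}/\|\xi\|$, which are zero-homogeneous but not smooth on the $\xi_1$-axis, is in fact more careful than the paper's brief justification, since you factor them through genuine two-variable Riesz symbols rather than applying the Mikhlin--H\"ormander theorem directly. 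Two details to spell out in a final write-up: fix the last constant permutation so that the columns match the stated ordering of $d(\omega,\xi)$, and note explicitly that the inverse of the $3\times3$ diagonalizer has entries of the same admissible type because its determinant is comparable to $\|\xi\|/\|\xi\|_\varepsilon$, hence bounded above and below away from the origin.
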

\begin{proof}
To verify that the diagonal entries of $d$ are truly the eigenvalues of $p$, we record eigenvectors, which are normalized to zero-homogeneous entries. Eigenvectors to $i \omega$ are
\begin{align*}
v_1^t &= \big(0,0,0, \xi_1^\prime , \xi_2^\prime, \xi_3^\prime \big), \\
v_2^t &= \big(\frac{\tilde{\xi}_1}{a }, \frac{\tilde{\xi}_2}{b }, \frac{\tilde{\xi}_3}{b }, 0, 0, 0 \big).
\end{align*}
Eigenvectors to $i\omega \mp i \sqrt{b} \| \xi \|$ are given by
\begin{align*}
v_3^t &= \big(0,- \frac{\xi_3^\prime }{\sqrt{b}}, \frac{\xi_2^\prime }{\sqrt{b}}, - ((\xi_2^\prime)^2 + (\xi_3^\prime)^2), \xi_1^\prime \xi_2^\prime, \xi_1^\prime \xi_3^\prime \big), \\
v_4^t &= \big(0, \frac{\xi_3^\prime }{\sqrt{b} }, - \frac{\xi_2^\prime }{\sqrt{b}}, - ((\xi_2^\prime)^2 + (\xi_3^\prime)^2), \xi_1^\prime \xi_2^\prime, \xi_1^\prime \xi_3^\prime \big).
\end{align*}
Eigenvectors to $i \omega \mp i \| \xi \|_{\varepsilon}$ are given by
\begin{align*}
v_5^t &= \big( \tilde{\xi}_2^2 + \tilde{\xi}_3^2, - \tilde{\xi}_1 \tilde{\xi}_2, - \tilde{\xi}_1 \tilde{\xi}_3, 0 , - \tilde{\xi}_3, \tilde{\xi}_2 \big),\\
v_6^t &= \big(- (\tilde{\xi}_2^2+ \tilde{\xi}_3^2), \tilde{\xi}_1 \tilde{\xi}_2, \tilde{\xi}_1 \tilde{\xi}_3, 0, -\tilde{\xi}_3, \tilde{\xi}_2 \big).
\end{align*}
Set
\begin{equation}
\label{eq:AuxiliaryMatrix}
m(\xi) = (v_1, \ldots, v_6)
\end{equation}
and
\begin{equation}
\label{eq:RenormalizationQuantities}
\alpha(\xi)= \frac{(\xi_2^2 + \xi_3^2)^{1/2}}{(\| \xi \| \| \xi \|_\varepsilon)^{\frac{1}{2}}} \text{ and } \delta = \frac{\| \xi \|}{\| \xi \|_\varepsilon}.
\end{equation}
The determinant of $m(\xi)$ is computed in Lemma \ref{lem:ComputationDeterminant} in the Appendix. We have
\begin{equation*}
| \det m(\xi) | \sim \alpha^4(\xi).
\end{equation*}
Furthermore, we find for $\alpha \neq 0$:
\begin{align*}
&m^{-1}(\xi) = \\
&\begin{pmatrix}
0 & 0 & 0 &  \xi_1^\prime & \xi_2^\prime & \xi_3^\prime \\
ab \tilde{\xi}_1 & ab \tilde{\xi}_2 & ab \tilde{\xi}_3 & 0 & 0 & 0 \\
0 & - \frac{\sqrt{b} \| \xi \|}{2 \| \xi \|_\varepsilon} \frac{\tilde{\xi}_3}{\tilde{\xi}_2^2 + \tilde{\xi}_3^2} &  
\frac{\sqrt{b} \| \xi \|}{2 \| \xi \|_\varepsilon} \frac{\tilde{\xi}_2}{\tilde{\xi}_2^2 + \tilde{\xi}_3^2} & - 1/2 & \frac{\xi_1' \xi_2'}{2(\xi_2'^2 + \xi_3'^2)} & \frac{\xi_1' \xi_3'}{2(\xi_2'^2 + \xi_3'^2)} \\
0 & \frac{\sqrt{b}\| \xi \| }{2 \| \xi \|_\varepsilon} \frac{\tilde{\xi}_3}{\tilde{\xi}_2^2 + \tilde{\xi}_3^2} & - \frac{\sqrt{b} \| \xi \| }{2 \| \xi \|_\varepsilon} \frac{\tilde{\xi}_2}{\tilde{\xi}_2^2 + \tilde{\xi}_3^2} & - 1/2 & \frac{\xi_1^\prime \xi_2^\prime}{2(\xi_2'^2 + \xi_3'^2)} & \frac{\xi_1^\prime \xi_3^\prime}{2(\xi_2'^2 + \xi_3'^2)} \\
a/2 & - \frac{b \tilde{\xi}_1 \tilde{\xi}_2}{2(\tilde{\xi}_2^2 + \tilde{\xi}_3^2)} & - \frac{b \tilde{\xi}_1 \tilde{\xi}_3}{2(\tilde{\xi}_2^2 + \tilde{\xi}_3^2)} & 0 & - \frac{\xi_3^\prime \| \xi \|_\varepsilon}{2 \| \xi \| (\xi_2'^2 + \xi_3'^2)} & \frac{\| \xi \|_\varepsilon \xi_2^\prime }{2 \| \xi \| (\xi_2'^2 + \xi_3'^2)} \\
-a/2 & \frac{b \tilde{\xi}_1 \tilde{\xi}_2}{2(\tilde{\xi}_2^2 + \tilde{\xi}_3^2)} &  \frac{b \tilde{\xi}_1 \tilde{\xi}_3}{2(\tilde{\xi}_2	^2 + \tilde{\xi}_3^2)} & 0 & - \frac{| \xi |_\varepsilon}{2 \| \xi \|} \frac{\xi_3^\prime}{(\xi_2'^2 + \xi_3'^2)} & \frac{\| \xi \|_\varepsilon \xi_2^\prime}{2 \| \xi \| (\xi_2'^2 + \xi_3'^2)}
\end{pmatrix}
.
\end{align*}
Since $\alpha(\xi) \to 0$ as $|\xi_2| + |\xi_3| \to 0$, $m$ becomes singular along the $\xi_1$-axis, and the entries of $m^{-1}(\xi)$ are no $L^p$-bounded Fourier multipliers anymore. This suggests to renormalize $v_3,\ldots,v_6$ with $1/\alpha(\xi)$. We let
\begin{equation*}
\begin{split}
&\tilde{m}(\xi) = \\
&\begin{pmatrix}
0 & \frac{\tilde{\xi}_1}{a} & 0 & 0 & (\delta (\tilde{\xi}_2^2 + \tilde{\xi}_3^2))^{\frac{1}{2}} & - (\delta(\tilde{\xi}_2^2 + \tilde{\xi}_3^2))^{\frac{1}{2}} \\
0 & \frac{\tilde{\xi}_2}{b} & - \frac{\xi_3'}{\sqrt{b} (\delta( \xi_2'^2 + \xi_3'^2))^{\frac{1}{2}}} & \frac{\xi_3'}{\sqrt{b} (\delta(\xi_2'^2 + \xi_3'^2))^{\frac{1}{2}}} & - \frac{\delta^{\frac{1}{2}} \tilde{\xi}_1 \tilde{\xi}_2}{(\tilde{\xi}_2^2 + \tilde{\xi}_3^2)^{1/2}} & \frac{\delta^{\frac{1}{2}} \tilde{\xi}_1 \tilde{\xi}_2}{(\tilde{\xi}_2^2 + \tilde{\xi}_3^2)^{\frac{1}{2}}} \\
0 & \frac{\tilde{\xi}_3}{b} & \frac{\xi_2'}{\sqrt{b} (\delta (\xi_2'^2 + \xi_3'^2))^{\frac{1}{2}}} & - \frac{\xi_2'}{\sqrt{b} (\delta(\xi_2'^2 + \xi_3'^2))^{\frac{1}{2}}} & - \frac{\delta^{\frac{1}{2}} \tilde{\xi}_1 \tilde{\xi}_3}{(\tilde{\xi}_2^2 + \tilde{\xi}_3^2)^{\frac{1}{2}}} & \frac{\delta^{\frac{1}{2}} \tilde{\xi}_1 \tilde{\xi}_3}{(\tilde{\xi}_2^2 + \tilde{\xi}_3^2)^{\frac{1}{2}}} \\
\xi_1' & 0 &- \frac{(\xi_2'^2 + \xi_3'^2)^{\frac{1}{2}}}{\delta^{\frac{1}{2}}} & - \frac{(\xi_2'^2 + \xi_3'^2)^{\frac{1}{2}}}{\delta^{\frac{1}{2}}} & 0 & 0 \\
\xi_2' & 0 & \frac{\xi_1' \xi_2' }{(\delta(\xi_2'^2 + \xi_3'^2))^{\frac{1}{2}}} & \frac{\xi_1' \xi_2' }{(\delta (\xi_2'^2 + \xi_3'^2))^{\frac{1}{2}}} & - \frac{\delta^{\frac{1}{2}} \tilde{\xi}_3}{(\tilde{\xi}_2^2 + \tilde{\xi}_3^2)^{\frac{1}{2}}} & - \frac{\delta^{\frac{1}{2}} \tilde{\xi}_3}{(\tilde{\xi}_2^2 + \tilde{\xi}_3^2)^{\frac{1}{2}}} \\
\xi_3' & 0 & \frac{\xi_1' \xi_3'}{(\delta( \xi_2'^2 + \xi_3'^2))^{\frac{1}{2}}} & \frac{\xi_1' \xi_3' }{ (\delta(\xi_2'^2 + \xi_3'^2))^{\frac{1}{2}}} & \frac{\tilde{\xi}_2 \delta^{\frac{1}{2}}}{(\tilde{\xi}_2^2 + \tilde{\xi}_3^2)^{\frac{1}{2}}} & \frac{\tilde{\xi}_2 \delta^{\frac{1}{2}}}{(\tilde{\xi}_2^2 + \tilde{\xi}_3^2)^{\frac{1}{2}}}
\end{pmatrix}.
\end{split}
\end{equation*}
By Lemma \ref{lem:ComputationDeterminant}, we have $\det (\tilde{m}) \sim 1$ if and only if $\xi \neq (\nu,0,0)$ for some $\nu \in \R$. Hence, $\tilde{m}$ and $\tilde{m}^{-1}$ are well-defined away from the $\xi_1$-axis. By Cramer's rule, we obtain $\tilde{m}(\xi)^{-1}$ from $m^{-1}(\xi)$ by modifying the rows 3-6:
\begin{equation*}
\begin{split}
&\tilde{m}^{-1}(\xi) = \\
&\begin{pmatrix}
0 & 0 & 0 & \xi_1' & \xi_2' & \xi_3' \\
ab \tilde{\xi}_1 & ab \tilde{\xi}_2 & ab \tilde{\xi}_3 & 0 & 0 & 0 \\
0 & - \frac{\sqrt{b} \delta^{\frac{1}{2}} \tilde{\xi}_3}{2 (\tilde{\xi}_2^2+ \tilde{\xi}_3^2)^{\frac{1}{2}}} &  \frac{\sqrt{b} \delta^{\frac{1}{2}} \tilde{\xi}_2}{2 (\tilde{\xi}_2^2 + \tilde{\xi}_3^2)^{\frac{1}{2}}} & - \frac{ (\tilde{\xi}_2^2 + \tilde{\xi}_3^2)^{\frac{1}{2}}}{2 \delta^{\frac{1}{2}}} & \frac{\xi_1' \xi_2' \delta^{\frac{1}{2}}}{2(\xi_2'^2 + \xi_3'^2)^{\frac{1}{2}}} & \frac{\xi_1' \xi_3' \delta^{\frac{1}{2}}}{2(\xi_2'^2 + \xi_3'^2)^{\frac{1}{2}}} \\
0 &  \frac{\sqrt{b} \delta^{\frac{1}{2}} \tilde{\xi}_3}{2 (\tilde{\xi}_2^2+\tilde{\xi}_3^2)^{\frac{1}{2}}} & - \frac{\sqrt{b} \delta^{\frac{1}{2}} \tilde{\xi}_2}{2 (\tilde{\xi}_2^2 + \tilde{\xi}_3^2)^{1/2}} & - \frac{ (\tilde{\xi}_2^2 + \tilde{\xi}_3^2)^{\frac{1}{2}}}{2 \delta^{\frac{1}{2}}} & \frac{\delta^{\frac{1}{2}} \xi_1' \xi_2'}{2(\xi_2'^2 + \xi_3'^2)^{\frac{1}{2}}} & \frac{\delta^{\frac{1}{2}} \xi_1' \xi_3'}{2(\xi_2'^2 + \xi_3'^2)^{\frac{1}{2}}} \\
\frac{a (\tilde{\xi}_2 + \tilde{\xi}_3^2)^\frac{1}{2}}{2 \delta^{\frac{1}{2}}} & - \frac{ b \tilde{\xi}_1 \tilde{\xi}_2}{2 (\delta(\tilde{\xi}_2^2 + \tilde{\xi}_3^2))^{\frac{1}{2}}} & - \frac{b \tilde{\xi}_1 \tilde{\xi}_3}{ 2 (\delta(\tilde{\xi}_2^2 + \tilde{\xi}_3^2))^{\frac{1}{2}}} & 0 & - \frac{\xi_3'}{2(\delta(\xi_2'^2 + \xi_3'^2))^{\frac{1}{2}}} & \frac{\xi_2'}{2  (\delta(\xi_2'^2 + \xi_3'^2))^{\frac{1}{2}}} \\
- \frac{a (\tilde{\xi}_2 + \tilde{\xi}_3^2)^\frac{1}{2}}{2 \delta^{\frac{1}{2}}} & \frac{b \tilde{\xi}_1 \tilde{\xi}_2}{2 (\delta(\tilde{\xi}_2^2 + \tilde{\xi}_3^2))^{\frac{1}{2}}} & \frac{b \tilde{\xi}_1 \tilde{\xi}_3}{2 (\delta(\tilde{\xi}_2^2 + \tilde{\xi}_3^2))^{\frac{1}{2}}} & 0 & - \frac{\xi_3'}{2(\delta (\xi_2'^2 + \xi_3'^2))^{\frac{1}{2}}} & \frac{\xi_2'}{2(\delta( \xi_2'^2 + \xi_3'^2))^{\frac{1}{2}}}
\end{pmatrix}
.
\end{split}
\end{equation*}
Also by Cramer's rule, it is enough to check that the Fourier multipliers associated with the entries in $\tilde{m}$ are $L^p$-bounded, for which we use Theorem \ref{thm:MultiplierTheorem}. 

For the first and second column this is evident since these are Riesz transforms up to change of variables. We turn to the proof that the entries of $v_i/\alpha(\xi)$, $i=3,\ldots,6$, are multipliers bounded in $L^p$ for $1<p<\infty$. This follows by writing them as products of zero-homogeneous functions, which are smooth away from the origin, and Riesz transforms in two variables. We give the details for the entries of $v_3/\alpha(\xi)$:
\begin{itemize}
\item $(v_3)_2/\alpha(\xi)$: We have to show that
\begin{equation*}
\frac{\xi_3 (\| \xi \| \| \xi \|_{\tilde{\varepsilon}})^{1/2} }{\| \xi \| (\xi_2^2 + \xi_3^2)^{1/2}} = \frac{\xi_3}{(\xi_2^2+ \xi_3^2)^{1/2}} \big( \frac{\| \xi \|_{\varepsilon}}{\| \xi \|} \big)^{1/2}
\end{equation*}
is a multiplier. This is the case because $\frac{i \xi_3}{(\xi_2^2+ \xi_3^2)^{1/2}}$ is a Riesz transform in $(x_2,x_3)$ and the second factor $\big( \frac{\| \xi \|_{\varepsilon}}{\| \xi \|} \big)^{1/2}$ is zero-homogeneous and smooth away from the origin, hence, in the scope of Theorem \ref{thm:MultiplierTheorem}.
\item $(v_3)_3/\alpha(\xi)$ is a multiplier by symmetry in $\xi_2$ and $\xi_3$ and the previous considerations.
\item $(v_3)_4/\alpha(\xi)$: We find
\begin{equation*}
\frac{(\xi_2^2 + \xi_3^2)}{\| \xi \|^2 (\xi_2^2+ \xi_3^2)^{1/2}} \cdot (\| \xi \| \| \xi \|_{\varepsilon})^{1/2} = \frac{(\xi_2^2 + \xi_3^2)^{1/2}}{\| \xi \|} \cdot \big( \frac{\| \xi \|_{\varepsilon}}{\| \xi \|} \big)^{1/2}
\end{equation*}
to be a Fourier multiplier as it is zero-homogeneous and smooth away from the origin.
\item $(v_3)_5/\alpha(\xi)$: Consider
\begin{equation*}
\frac{\xi_1 \xi_2}{\| \xi \|^2 (\xi_2^2 + \xi_3^2)^{1/2}} (\| \xi \| \| \xi \|_{\varepsilon})^{1/2} = \frac{\xi_1}{\| \xi \|} \cdot \frac{\xi_2}{(\xi_2^2 + \xi_3^2)^{1/2}} \cdot \big( \frac{\| \xi \|_{\varepsilon}}{\| \xi \|} \big)^{1/2}, 
\end{equation*}
which is again a Fourier multiplier because the first and third expression are zero-homogeneous and smooth in $\R^n \backslash 0$, the second is again a Riesz transform in two variables.
\item $(v_3)_6/\alpha(\xi)$ can be handled like the previous case.
\end{itemize}
The remaining entries of $\tilde{m}$ are treated similarly, which completes the proof.

\end{proof}

\begin{remark}
To compute the eigenvalues from scratch, it is perhaps easiest to use the block structure of $p(\omega,\xi)$ to find
\begin{equation*}
\det (p(\omega,\xi)) = \det (-\omega^2 1_{3 \times 3} - \mathcal{B}^2(\xi) \varepsilon^{-1}).
\end{equation*}
Next, we can use the identity $\mathcal{B}^2(\xi) = - \| \xi \|^2 1_{3 \times 3} + \xi \otimes \xi$, after which there seems to be no further simplification but to compute the determinant brutely. Note that $\det(i \lambda 1_{6 \times 6} - p(\omega,\xi)) = \det (p(\lambda - \omega,\xi))$, which allows to find the eigenvalues from the zero locus of $\det(p(\omega,\xi))$.
\end{remark}

We prove Theorem \ref{thm:ResolventEstimateMaxwell} for $d=3$ following along the argument for $d=2$. Proposition \ref{prop:Explicit3d} in the Appendix provides a decomposition
\begin{equation}
\label{eq:Decomposition3d}
p^{-1}(\omega,\xi)= M^3(A,B,C,D) + M^3_c
\end{equation}
with $M^3_c v = 0$ for $\xi_1 v_1 + \xi_2 v_2 + \xi_3 v_3 = \xi_1 v_4 + \xi_2 v_5 + \xi_3 v_6 = 0$ and
\begin{equation*}
A = \frac{1}{i(\omega - \| \xi \|_\varepsilon)}, \; B = \frac{1}{i(\omega + \| \xi \|_\varepsilon)}, \; C= \frac{1}{i(\omega - \| \xi \|)}, \; D = \frac{1}{i(\omega + \| \xi \|)}.
\end{equation*}

\begin{proof}[Proof of Theorem \ref{thm:ResolventEstimateMaxwell}, $d=3$]
The estimate
\begin{equation*}
\| P(\omega,D)^{-1} \|_{L^p_0(\R^3) \to L^q_0(\R^3)} \lesssim \| ((-\Delta)^{\frac{1}{2}}-\omega)^{-1} \|_{L^p \to L^q} + \| ((-\Delta)^{\frac{1}{2}}-\omega)^{-1} \|_{L^p \to L^q}
\end{equation*}
for $1 \leq p,q \leq \infty$ with $(1 <p < \infty$ or $1<q<\infty)$ follows from the same argument as in the two-dimensional case: The entries of $M^3(A,B,C,D)$ are linear combinations of $A$,$B$,$C$,$D$ multiplied with components of $\tilde{m}$ and $\tilde{m}^{-1}$, which yield Fourier multipliers by Lemma \ref{lem:DiagonalizationPartiallyAnisotropic}.

\medskip

 Below let $(\mathcal{R}_i f) \widehat (\xi) = \frac{\xi_i}{\| \xi \|} \hat{f}(\xi) $.
To show the lower bound for $1<p,q<\infty$, we consider the following initial data:
\begin{equation*}
J_{e} = \begin{pmatrix}
0 \\ - \mathcal{R}_3 f \\ \mathcal{R}_2 f
\end{pmatrix}, \quad
J_{m} = \underline{0}.
\end{equation*}
Note that $\nabla \cdot J_{e} = 0$ and again, the initial data is also physically meaningful as the magnetic current vanishes.

Let $(e_{\pm} f) \widehat (\xi) = (\omega \pm \sqrt{b} | \xi |)^{-1} \hat{f}(\xi)$. We compute with $m$ as in \eqref{eq:AuxiliaryMatrix}:
\begin{equation}
\label{eq:Example}
(d m^{-1})(\xi) \begin{pmatrix}
\hat{J}_{e} \\ \hat{J}_{m}
\end{pmatrix}
= \frac{\sqrt{b}}{2}
\begin{pmatrix}
0 \\ 0 \\  \widehat{e_- f} \\
- \widehat{e_+ f} \\ 0 \\ 0
\end{pmatrix}, \quad
\begin{pmatrix}
D \\ B 
\end{pmatrix}
= i
\begin{pmatrix}
0 \\ - \mathcal{R}_3 ( e_- f + e_+ f) \\ \mathcal{R}_2 ( e_- f + e_+ f) \\
- ((\mathcal{R}_2^2 + \mathcal{R}_3^2) (e_- f - e_+ f) \\ \mathcal{R}_1 \mathcal{R}_2 (e_- f - e_+ f) \\ \mathcal{R}_1 \mathcal{R}_3 (e_- f - e_+ f).
\end{pmatrix}
\end{equation}
We shall see that
\begin{equation}
\label{eq:LowerBound}
\| (D,B) \|_{L^q_0} \gtrsim \| e_- f + e_+ f \|_{L^q} + \| e_- f - e_+ f \|_{L^q} \gtrsim \| e_- f \|_{L^q} + \| e_+ f \|_{L^q}
\end{equation}
either, if $f$ has frequency support in a conic neighbourhood of the $\xi_3$-axis, or, if $f$ is spherically symmetric.

\medskip

\noindent Assume that $g \in \mathcal{S}(\R^3)$ and
\begin{equation*}
\text{supp} (\hat{g}) \subseteq \{ \xi \in \R^3 : | \xi / |\xi| - e_3 | \leq c \ll 1 \text{ and } \frac{1}{2} \leq | \xi | \leq 2 \} =: E.
\end{equation*}
By Theorem \ref{thm:MultiplierTheorem}, we have for $1<p<\infty$
\begin{equation}
\label{eq:RieszTransformConicNeighbourhood}
\| g \|_{L^p} \lesssim \| \mathcal{R}_3 g \|_{L^p} \text{ and } \| \mathcal{R}_2 g \|_{L^p} \leq C(c) \| g \|_{L^p}
\end{equation}
with $C(c) \to 0$ as $c \to 0$. If $\text{supp}( \hat{f}) \subseteq E$, then also the Fourier support of $e_- f \pm e_+ f$ is contained in $E$, and an application of \eqref{eq:RieszTransformConicNeighbourhood} to $D_2$ and $B_1$ yields
\begin{equation}
\label{eq:EstimateBelow}
\begin{split}
\| (D,B) \|_{L^q_0} &\gtrsim \| \mathcal{R}_3 ( e_- f + e_+ f) \|_{L^q} + \| (\mathcal{R}_2^2 + \mathcal{R}_3^2) ( e_- f - e_+ f) \|_{L^q} \\
&\gtrsim \| e_- f + e_+ f \|_{L^q} + \| e_- f - e_+ f \|_{L^q} \\
&\gtrsim \| e_- f \|_{L^q} + \| e_+ f \|_{L^q},
\end{split}
\end{equation}
which is \eqref{eq:LowerBound}.\\
Next, suppose that $f \in L^p(\R^3)$, $1<p<\infty$ is spherically symmetric. Since $\mathcal{R}_1^2 + \mathcal{R}_2^2 + \mathcal{R}_3^2 = Id$ and $\| \mathcal{R}_i^2 f \|_{L^p} = \| \mathcal{R}_j^2 f \|_{L^p}$ for $i,j \in \{1,2,3\}$ by change of variables and rotation symmetry, we find $\| \mathcal{R}_i^2 f \|_{L^p} \gtrsim \| f \|_{L^p}$. By $L^p$-boundedness, we have
\begin{equation}
\label{eq:RieszTransformsEstimateRadialSymmetry}
\| f \|_{L^p} \lesssim \| \mathcal{R}_i^2 f \|_{L^p} \lesssim \| \mathcal{R}_i f \|_{L^p} \lesssim \| f \|_{L^p}.
\end{equation}
Similarly,
\begin{equation*}
(\mathcal{R}_1^2 + \mathcal{R}_2^2) + (\mathcal{R}_2^2 + \mathcal{R}_3^2) + (\mathcal{R}_1^2 + \mathcal{R}_3^2) = 2 Id,
\end{equation*}
and $\| (\mathcal{R}_i^2 + \mathcal{R}_j^2) f \|_{L^p} = \| (\mathcal{R}_k^2 + \mathcal{R}_l^2) f \|_{L^p}$ again by change of variables and rotation symmetry. Hence, we also find
\begin{equation}
\label{eq:RieszTransformBoundBelowSquared}
\| (\mathcal{R}_i^2 + \mathcal{R}_j^2) f \|_{L^p} \gtrsim \| f \|_{L^p}.
\end{equation}
\eqref{eq:RieszTransformsEstimateRadialSymmetry} and \eqref{eq:RieszTransformBoundBelowSquared} together allow to argue as well in case of spherical symmetry as in \eqref{eq:EstimateBelow}. If we can choose $f$ such that the operator norms of $e_{\pm}$ are approximated, we find
 \begin{equation*}
 \| (D,H) \|_{L^q_0} \gtrsim (\| e_- \|_{L^p \to L^q} + \| e_+ \|_{L^p \to L^q} ) \| f \|_{L^p}.
 \end{equation*}
Lastly, if $\text{supp} (\hat{f}) \subseteq E$, i.e., the frequency support is in a conic neighbourhood of the $\xi_3$-axis, or is spherically symmetric, we find $\| (J_{e},J_{m}) \|_{L^p_0} \sim \| f \|_{L^p}$. To see that it suffices to consider the frequency support of $f$ as such, we recall the examples from \cite[Section~5.2]{KwonLee2020}, giving the claimed lower bound for the operator norm of the resolvent of the fractional Laplacian: a Knapp type example, which can be realized with frequency support in a conic neighbourhood of the $\xi_3$-axis \cite[p.~1458]{KwonLee2020}, and a spherically symmetric example related with the surface measure on the sphere \cite[p.~1459]{KwonLee2020}.

\medskip

We turn to the proof of \eqref{eq:ResolventEstimateAbove} for $d=3$:
\begin{equation}
\label{eq:ResolventEstimateAbove3d}
\begin{split}
&\quad \| P(\omega,D)^{-1} (J_e,J_m) \|_{L^q}  \\
&\lesssim (\| ((-\Delta)^{\frac{1}{2}} - \omega)^{-1} \|_{L^p \to L^q} + \| ((-\Delta)^{\frac{1}{2}} + \omega)^{-1} \|_{L^p \to L^q}) \|(J_e,J_m)\|_{L^p} \\
&\quad + \| (-\Delta)^{-\frac{1}{2}} \rho_e \|_{L^q} + \| (-\Delta)^{-\frac{1}{2}} \rho_m \|_{L^q}.
\end{split}
\end{equation}
This hinges again on the decomposition
\begin{equation*}
(P^{-1}(\omega,D) (J_e,J_m))^{\wedge}(\xi) = M^3(A,B,C,D) (\hat{J}_e,\hat{J}_m)(\xi) + M^3_c  (\hat{J}_e,\hat{J}_m)(\xi).
\end{equation*}
The contribution of $M^3(A,B,C,D)$ is estimated like in the first part of the proof. We compute
\begin{equation*}
\begin{split}
&M^3_c (\hat{J}_e,\hat{J}_m)(\xi) \\
&= -
(\frac{b \tilde{\xi}_1 \hat{\rho}_e(\xi)}{ \omega \| \xi \|_{\varepsilon}} , \frac{a \tilde{\xi}_2 \hat{\rho}_e(\xi)}{ \omega \| \xi \|_{\varepsilon}} , \frac{a \tilde{\xi}_3 \hat{\rho}_e(\xi)}{ \omega \| \xi \|_{\varepsilon}} , \frac{\xi_1' \hat{\rho}_m(\xi)}{  \omega \| \xi \|} , \frac{ \xi_2' \hat{\rho}_m(\xi)}{ \omega \| \xi \|},  \frac{\xi_3' \hat{\rho}_m(\xi)}{ \omega \| \xi \|} )^t
.
\end{split}
\end{equation*}
The claim follows by Theorem \ref{thm:MultiplierTheorem} because $\| \xi \| / \| \xi \|_\varepsilon$ and $\xi_i'$ and $\tilde{\xi}_i$ are zero-\-homo\-geneous and smooth away from the origin. The proof of Theorem \ref{thm:ResolventEstimateMaxwell} is complete.
\end{proof}

\section{Local and global LAP}
\label{section:LAP}
Let $P(\omega,D)$ be as in the previous section. In the following we want to investigate the limit of
\begin{equation*}
P(\omega \pm i \delta, D)^{-1} f \text{ as } \delta \to 0, \quad \omega \in \R \backslash 0,
\end{equation*}
by which we construct solutions to time-harmonic Maxwell's equations. By scaling we see that the following estimates are uniform in $\omega$, provided it varies in a compact set away from the origin. We further suppose that $\omega > 0$; the case $\omega < 0$ can be treated with the obvious modifications.


In the following let $0<|\delta|<1/2$. By the above diagonalization, it is equivalent to consider uniform boundedness of
\begin{equation*}
e^{\varepsilon^\prime}_{\pm}(\omega +i \delta): L^p(\R^d) \to L^q(\R^d), \quad (e^{\varepsilon'}_{\pm}(\omega + i \delta) f) \widehat (\xi) = \frac{\hat{f}(\xi)}{\|\xi\|_{\varepsilon^\prime} \pm ( \omega+ i \delta)}.
\end{equation*}
Hence, by the results of the previous section, the uniform $L^p_0$-$L^q_0$-LAP fails due to the lack of uniform resolvent estimates for the Half-Laplacian in $L^p$-spaces. This is recorded in Corollary \ref{cor:GlobalLAP}.

Regarding the local $L^p_0$-$L^q_0$-LAP, we observe that the operator
\begin{equation*}
(e^{\varepsilon'}_{+}(\omega \pm i \delta) f) \widehat (\xi) = \frac{\beta(\xi) \hat{f}(\xi)}{\|\xi\|_{\varepsilon'} + (\omega \pm i \delta)}
\end{equation*}
for $\beta \in C^\infty_c$, $0<\delta<1/2$ is bounded from $L^p \to L^q$ for $1 \leq p \leq q \leq \infty$ by Young's inequality, with the obvious limit as $\delta \to 0$. Thus, we focus on
\begin{equation}
\label{eq:ReducedOperator}
(e_\delta f) \widehat (\xi) := (e_-(\omega \pm i\delta) f) \widehat (\xi) = \frac{\beta(\xi) \hat{f}(\xi)}{\|\xi\|_{\varepsilon'} - (\omega \pm i \delta)}
\end{equation}
with $0< \delta  < \delta_0 \ll 1$, where $\beta \in C^\infty_c(\R^n)$.

We can be more precise about the limiting operators: For $t \in \R$ recall Sokhotsky's formula, which hold in the sense of distributions:
\begin{equation*}
\lim_{\varepsilon \downarrow 0} \frac{1}{t \pm i \varepsilon} = v.p. \frac{1}{t} \mp i \pi \delta_0(t),
\end{equation*}
where $\delta_0$ denotes the delta-distribution at the origin.\\
Let
\begin{equation*}
\mathcal{R}_{\pm}^{loc} f = \lim_{\delta \to \pm 0} e_{\delta} f.
\end{equation*}
We find
\begin{equation*}
\mathcal{R}_{\pm}^{loc} f = v.p. \int \frac{\beta(\xi) e^{ix\xi}}{\|\xi\|_{\varepsilon'} - \omega} \hat{f}(\xi) d\xi \pm i \pi \int e^{i x \xi} \beta(\xi) \delta(\|\xi\|_{\varepsilon'} - \omega) \hat{f}(\xi) d\xi,
\end{equation*}
and by the diagonalization formulae, we find that the limiting operators can be expressed as linear combinations involving possibly generalized Riesz transforms, $\mathcal{R}^{loc}_{\pm}$, and $e_+$. We recall the $L^p$-$L^q$-mapping properties of $\mathcal{R}^{loc}_{\pm}$.

We observe that
\begin{equation*}
(\mathcal{R}^{loc}_+ - \mathcal{R}^{loc}_-) f = 2 \pi i \int_{\{ \| \xi \|_{\varepsilon'} = 1 \}} \beta(\xi) e^{ix \xi} \hat{f}(\xi) d\sigma(\xi).
\end{equation*}
This operator, modulo the bounded operator given by convolution with $\mathcal{F}^{-1} \beta$ and linear change of variables  $\xi \to \zeta$ such that $\| \xi \|_{\varepsilon'} = \| \zeta \|$, is known as \emph{restriction-extension operator} (cf. \cite{JeongKwonLee2016,KwonLee2020}) and is a special case of the Bochner-Riesz operator of negative index:
\begin{equation*}
(\mathcal{B}^{\alpha} f) \widehat (\xi) = \frac{1}{\Gamma(1-\alpha)} \frac{\hat{f}(\xi)}{(1-\| \xi \|^2)_+^\alpha}, \quad 0 < \alpha \leq \frac{d+2}{2}, 
\end{equation*}
$\mathcal{B}_\alpha$ is defined by analytic continuation for $\alpha \geq 1$. Hence, for $\alpha = 1$, it matches the restriction--extension operator. This operator is well-understood due to the works of B\"orjeson \cite{Boerjeson1986}, Sogge \cite{Sogge1986}, and Guti\'errez \cite{Gutierrez2000,Gutierrez2004}. The most recent results for Bochner--Riesz operators of negative index are due to Kwon--Lee \cite{KwonLee2020}. Guti\'errez showed that $\mathcal{B}^1: L^p \to L^q$ is bounded if and only if $(1/p,1/q) \in \mathcal{P}(d)$ with
\begin{equation*}
\mathcal{P}(d) = \{ (x,y) \in [0,1]^2 \, : \, x-y \geq \frac{2}{d+1}, \; x > \frac{d+1}{2d}, \; y < \frac{d-1}{2d} \}.
\end{equation*}
She used this to show uniform resolvent estimates for 
\begin{equation*}
(-\Delta - z)^{-1}: L^p \to L^q, \quad z \in \mathbb{S}^1 \backslash \{1 \} \text{ for } (1/p,1/q) \in \mathcal{R}_1(d).
\end{equation*}
We summarize the operator bounds for $e_\delta$ and $\mathcal{R}_{\pm}^{\text{loc}}$.
\begin{proposition}[{\cite[Proposition~4.1]{KwonLee2020}}]
\label{prop:OperatorBounds}
Let $\omega > 0$, $0<\delta<1/2$, $\beta \in C^\infty_c(\R^d)$ and $d_\delta$ as in \eqref{eq:ReducedOperator}. Then, we find the following estimates to hold for $(1/p,1/q) \in \mathcal{P}(d)$:
\begin{equation}
\label{eq:OperatorBounds}
\begin{split}
\| e_\delta \|_{L^p \to L^q} &\leq C(\omega,p,q), \\
\big\| \int_{\R^d} e^{ix.\xi} \delta(\| \xi \|_{\varepsilon'} - \omega) \hat{f}(\xi) d\xi \big\|_{L^q} &\leq C(\omega,p,q) \| f \|_{L^p}, \\
\big\| v.p. \int_{\R^d} e^{ix.\xi} \frac{\beta(\xi)}{\| \xi \|_{\varepsilon'} - \omega} \hat{f}(\xi) d\xi \big\|_{L^q} &\leq C(\omega,p,q) \|f \|_{L^p}.
\end{split}
\end{equation}
\end{proposition}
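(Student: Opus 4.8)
The plan is to reduce all three estimates to the localized restriction--extension operator for the sphere (Bochner--Riesz of negative index) and then invoke Guti\'errez's sharp bounds for it. First I would normalize: a linear change of variables turns $\|\cdot\|_{\varepsilon'}$ into the Euclidean norm and a parabolic rescaling reduces to $\omega = 1$, at the cost of constants depending only on $p,q,\varepsilon,\mu$. Next, the part of each multiplier supported where $\bigl|\,|\xi|-1\,\bigr| \geq c > 0$ is a bounded, compactly supported symbol, hence has Schwartz convolution kernel and maps $L^p \to L^q$ for every $1 \leq p \leq q \leq \infty$; since $\tfrac1p > \tfrac1q$ on $\mathcal{P}(d)$, this contribution is harmless and I may localize all three multipliers to the annulus $\{\bigl|\,|\xi| - 1\,\bigr| \leq c\}$. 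Finally, by Sokhotsky--Plemelj ($\tfrac{1}{t \mp i\delta} \to \mathrm{v.p.}\,\tfrac1t \pm i\pi\delta_0(t)$) the three operators are linearly related in the limit $\delta \downarrow 0$, so it suffices to prove the uniform-in-$\delta$ bound for $e_\delta$ together with the bound for the surface operator $Sf = \int_{\{|\xi|=1\}} e^{ix\xi}\beta(\xi)\hat f(\xi)\,d\sigma(\xi)$; the principal-value bound is then the resulting combination.

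For the core I would decompose dyadically around the sphere. With $\psi \in C_c^\infty(\R \setminus 0)$ and $\sum_{j \geq 0}\psi(2^j t) = 1$ for $0 < |t| \leq c$, write the localized multiplier of $e_\delta$ as $\sum_{j \geq 0} a_j^\delta$, $a_j^\delta(\xi) = \beta(\xi)\psi(2^j(|\xi|-1))(|\xi| - 1 \mp i\delta)^{-1}$, which is supported on an $O(2^{-j})$-annulus and of size $\lesssim \min(2^j, \delta^{-1})$; write $M_j$ for the multiplier $\beta(\xi)\psi(2^j(|\xi|-1))$. The heart of the matter is the single-scale estimate $\|M_j\|_{L^p \to L^q} \lesssim 2^{-j\theta(p,q)}$, with $\theta(p,q)$ obtained by interpolation between the Stein--Tomas restriction/extension estimate for the parabolically rescaled $2^{-j}$-annulus ($\theta = 1$ on the line $\tfrac1p - \tfrac1q = \tfrac{2}{d+1}$), the trivial $L^1 \to L^\infty$ bound ($\theta = 1$), and the trivial $L^1 \to L^1$, $L^\infty \to L^\infty$ bounds ($\theta < 0$); the conditions $\tfrac1p > \tfrac{d+1}{2d}$ and $\tfrac1q < \tfrac{d-1}{2d}$ are exactly the thresholds beyond which this interpolation keeps $\theta \geq 1$, i.e. $\sum_j \min(2^j,\delta^{-1}) 2^{-j\theta}$ stays bounded uniformly in $\delta$ (the terms with $2^{-j}\lesssim\delta$ summing to $O(\delta^{\theta - 1})$). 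Applying the same single-scale estimate to the $2^{-j}$-fattened surface measures sums to the bound for $S$. On the locus where $\theta = 1$ exactly --- the critical line, and, for large gaps $\tfrac1p - \tfrac1q > \tfrac{d+1}{2d}$, where Young's inequality with the kernel $\widehat{d\sigma}(x) = O(|x|^{-(d-1)/2})$ applies --- one works with the undecomposed operator directly and then interpolates. This scheme is that of Guti\'errez \cite{Gutierrez2004} and, for the full set $\mathcal{P}(d)$, Kwon--Lee \cite{KwonLee2020}.

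The convergence $e_\delta f \to \mathcal{R}^{\mathrm{loc}}_\pm f$ and the stated estimates for the limiting operators are then routine: the symbols converge in $\mathcal{S}'$ by Sokhotsky--Plemelj, the uniform $L^p \to L^q$ bounds allow passage to the limit there, and the three displays in \eqref{eq:OperatorBounds} for $\mathcal{R}^{\mathrm{loc}}_\pm$ and the surface integral are the $\delta \to 0$ forms of what was proved.

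The main obstacle I expect is the oscillatory single-scale estimate: the Stein--Tomas restriction/extension bound for the sphere, in its fattened and parabolically rescaled forms, with the sharp power of $2^j$, together with the $L^2$ almost-orthogonality of the $\sim 2^{j(d-1)/2}$ caps comprising a $2^{-j}$-annulus. Once that input is granted, the reductions, the dyadic decomposition, the summation, the uniformity in $\delta$, and the limit are all standard.
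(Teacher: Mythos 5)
You have set out to prove something the paper itself does not prove: Proposition \ref{prop:OperatorBounds} is imported verbatim from Kwon--Lee \cite[Proposition~4.1]{KwonLee2020} (with Guti\'errez \cite{Gutierrez2004} behind the critical-line and restriction--extension bounds), and the only ``argument'' the paper supplies is the observation, made just above the statement, that a linear change of variables taking $\|\cdot\|_{\varepsilon'}$ to the Euclidean norm, together with the harmless convolution with $\mathcal{F}^{-1}\beta$, reduces the three operators to the ones treated in the cited works. Your first reduction step (change of variables, rescaling to $\omega=1$, discarding the part of the symbol supported away from the unit sphere, Sokhotsky--Plemelj to link the three operators) is exactly this reduction, and the rest of your proposal is a reconstruction of the Guti\'errez/Kwon--Lee proof itself: dyadic decomposition in $\bigl||\xi|-1\bigr|$, single-scale bounds from Stein--Tomas plus trivial estimates, summation uniform in $\delta$, and kernel-decay/Young arguments for the large-gap part. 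So the route is not genuinely different from the paper's; it is a proof of the black box the paper chooses to cite.

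The one place where your sketch is materially thinner than what the cited results actually require is the critical line $\tfrac1p-\tfrac1q=\tfrac{2}{d+1}$, which belongs to $\mathcal{P}(d)$ because the inequality there is non-strict. There your single-scale exponent is $\theta=1$ exactly, and the dyadic sum $\sum_j \min(2^j,\delta^{-1})\,2^{-j}$ diverges logarithmically, so the scheme as described does not close; also, interpolating the Stein--Tomas pair with $L^1\to L^\infty$ moves you off the critical line rather than along it, so ``interpolation keeps $\theta\ge 1$'' does not by itself cover the whole line with room to sum. You acknowledge this and defer to working with the undecomposed operator as in \cite{Gutierrez2004,KwonLee2020}, which is indeed where the real work lies (explicit oscillatory-integral/kernel estimates for the localized resolvent, not just fattened-annulus bounds). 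Since the paper's stance is to cite precisely that work, your proposal is acceptable as a sketch, but if you intend it as a self-contained proof you must supply the critical-line argument rather than gesture at it.
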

We are ready for the proof of the local LAP:
\begin{proposition}[Local LAP]
\label{prop:LocalLAP}
We find a local $L^p_0$-$L^q_0$-LAP to hold provided that $(1/p,1/q) \in \mathcal{P}(d)$. This means that for $\omega \in \R \backslash 0$ and $\beta \in C^\infty_c(\R^d)$, we find uniform (in $0<\delta<1/2$) resolvent bounds
\begin{equation}
\label{eq:UniformResolventBounds}
\| P(\omega \pm i \delta,D)^{-1} \beta(D) f \|_{L^q_0(\R^d)} \lesssim_{p,q,d,\omega} \| f \|_{L^p_0(\R^d)}
\end{equation}
and there are limiting operators $P_{\pm}^{loc}: L^p_0 \to L^q_0$ such that
\begin{equation*}
P(\omega \pm i \delta, D)^{-1} \beta(D) f \to P_{\pm}^{loc}(\omega) f \text{ in } (\mathcal{S}'(\R^d))^{m(d)}.
\end{equation*}
\end{proposition}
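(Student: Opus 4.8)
The plan is to reduce the local LAP for the full Maxwell operator to the corresponding statement for the scalar operators $e_\delta$ via the diagonalization established in Lemma~\ref{lem:Diagonalization2d} (for $d=2$) and Lemma~\ref{lem:DiagonalizationPartiallyAnisotropic} (for $d=3$), and then quote Proposition~\ref{prop:OperatorBounds}. Concretely, writing $P(\omega,D) = m(D) d(\omega,D) m^{-1}(D)$ on the Fourier side, the operator $P(\omega\pm i\delta,D)^{-1}\beta(D)$ becomes a matrix whose entries are linear combinations of the scalar resolvents with multipliers $(\omega\pm i\delta \mp d_j(\xi))^{-1}$ — namely $i\omega^{-1}\beta(\xi)$, the harmless piece $(\|\xi\|_{\varepsilon'}+\omega\pm i\delta)^{-1}\beta(\xi)$ handled by Young's inequality as already noted, and the delicate piece $e_\delta$ from \eqref{eq:ReducedOperator} — each pre- and post-composed with entries of $m(D)$ and $m^{-1}(D)$, which are $L^r$-bounded for every $1<r<\infty$ by the two diagonalization lemmas (and, after cutting off by $\beta$, the degenerate/renormalized entries in the $d=3$ case cause no trouble on the relevant frequency annulus). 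One must be slightly careful that the composition is literally of the stated form on $L^p_0\to L^q_0$: since $\mathcal{P}(d)$ forces $1<p,q<\infty$ (because $x>\tfrac{d+1}{2d}>0$ and $y<\tfrac{d-1}{2d}<1$), all the intervening $L^r$-spaces in the composition chain are reflexive, so the Mikhlin multipliers genuinely bound the composed operator, exactly as in the proof of Theorem~\ref{thm:ResolventEstimateMaxwell}.

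For the uniform bound \eqref{eq:UniformResolventBounds}, I would invoke the first estimate in \eqref{eq:OperatorBounds}, $\|e_\delta\|_{L^p\to L^q}\le C(\omega,p,q)$ uniformly in $0<\delta<1/2$ for $(1/p,1/q)\in\mathcal{P}(d)$, combine it with the trivial uniform bound on the $(\,\cdot+\omega\pm i\delta)^{-1}\beta$ piece and the $i\omega^{-1}\beta$ piece, and sandwich between the uniform-in-$\delta$ Mikhlin bounds on the $m,m^{-1}$ entries; the divergence-freeness is automatic because the off-diagonal ``charge'' block $M^d_c$ acts by $0$ on $L^p_0$, so the output again lies in $L^q_0$ (and one checks directly that each component of the image is solenoidal in the required sense). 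For the convergence statement I would first observe that each scalar multiplier $(\|\xi\|_{\varepsilon'}-(\omega\pm i\delta))^{-1}\beta(\xi)$ converges in $\mathcal{S}'$ to the Sokhotsky limit $\big(\text{v.p.}\,\tfrac{1}{\|\xi\|_{\varepsilon'}-\omega}\mp i\pi\,\delta(\|\xi\|_{\varepsilon'}-\omega)\big)\beta(\xi)$, apply $\mathcal{F}^{-1}$, and use that the other pieces converge trivially; then $P^{loc}_\pm(\omega)$ is defined by plugging $\mathcal{R}^{loc}_\pm$ (as above) into the diagonalization formula, and the second and third estimates in \eqref{eq:OperatorBounds} show $P^{loc}_\pm(\omega):L^p_0\to L^q_0$ is bounded, hence it is the $L^p$-$L^q$ extension of the distributional limit. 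The continuity of $\mathcal{F}^{\pm1}$ on $\mathcal{S}'$ upgrades the symbol-level convergence to convergence of $P(\omega\pm i\delta,D)^{-1}\beta(D)f$ in $(\mathcal{S}'(\R^d))^{m(d)}$ for each fixed $f$.

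The main obstacle is not analytic depth but bookkeeping: one must confirm that after the $\beta(D)$ cutoff the only genuinely non-trivial contribution is $e_\delta$, i.e.\ that the multiplier $(\|\xi\|_{\varepsilon'}+\omega\pm i\delta)^{-1}\beta(\xi)$ and the $\omega^{-1}$-term are uniformly $L^p\to L^q$-bounded for $1\le p\le q\le\infty$ (Young), that in the $d=3$ partially anisotropic case the renormalized entries of $\tilde m,\tilde m^{-1}$ — which blow up along the $\xi_1$-axis — are nonetheless fine once multiplied by a compactly supported $\beta$ chosen to vanish near that axis, or else that the degenerate block is absorbed into the resolvents $e_\delta$ corresponding to the eigenvalues $\omega\mp\sqrt b\|\xi\|$ and $\omega\mp\|\xi\|_\varepsilon$ in a way that still lands in $\mathcal{P}(d)$, and that the singular support $\{\|\xi\|_{\varepsilon'}=\omega\}$ lies inside $\{\beta=1\}$ so that the $\delta$-distribution term is genuinely captured. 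None of these steps is hard, but they are exactly where a careless argument would slip.
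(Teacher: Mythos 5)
Your overall strategy coincides with the paper's: diagonalize via Lemma \ref{lem:Diagonalization2d} resp.\ Lemma \ref{lem:DiagonalizationPartiallyAnisotropic}, observe that after multiplying by $\beta$ the only delicate scalar piece is $e_\delta$ from \eqref{eq:ReducedOperator}, quote Proposition \ref{prop:OperatorBounds} for the uniform bounds and Sokhotsky's formula for the limits, and sandwich everything between the $L^r$-bounded entries of $m(D)$, $m^{-1}(D)$. However, there is a genuine gap in how you dispose of the endpoint exponents. Your claim that ``$\mathcal{P}(d)$ forces $1<p,q<\infty$'' is false: the conditions $x>\frac{d+1}{2d}$ and $y<\frac{d-1}{2d}$ do \emph{not} exclude $x=1$ or $y=0$, and indeed $(1,0)\in\mathcal{P}(d)$ (e.g.\ for $d=2$: $1-0\geq \tfrac23$, $1>\tfrac34$, $0<\tfrac14$), so the proposition as stated covers $p=1$ and $q=\infty$. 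At these endpoints your argument breaks down exactly where you rely on it not being needed: the entries of $m(D)$ and $m^{-1}(D)$ are (generalized) Riesz transforms and are bounded only on reflexive $L^r$, $1<r<\infty$, so the composed multiplier bound you invoke is simply not available for $p=1$ or $q=\infty$, and there is no ``reflexive space in the middle'' trick here because both the input and output spaces are the problematic ones.

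The missing step, which the paper supplies, is an extension argument using the frequency cutoff: for $(1/p,0)\in\mathcal{P}(d)$ one picks $1<\tilde q<\infty$ with $(1/p,1/\tilde q)\in\mathcal{P}(d)$ and uses that $P(\omega\pm i\delta,D)^{-1}\beta(D)f$ has compact Fourier support, so by Young's inequality (convolving with the inverse Fourier transform of a fattened cutoff) $\| P(\omega\pm i\delta,D)^{-1}\beta(D)f\|_{L^\infty_0}\lesssim \|P(\omega\pm i\delta,D)^{-1}\beta(D)f\|_{L^{\tilde q}_0}\lesssim\|f\|_{L^p_0}$, uniformly in $\delta$; the case $(1,1/q)\in\mathcal{P}(d)$ is then handled by the dual argument. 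Without this (or an equivalent) step your proof only establishes \eqref{eq:UniformResolventBounds} on the open part of $\mathcal{P}(d)$ with $1<p,q<\infty$. Two smaller remarks: the worry about the $\xi_1$-axis in $d=3$ is unnecessary, since the renormalized entries of $\tilde m$, $\tilde m^{-1}$ are already globally $L^r$-bounded multipliers by Lemma \ref{lem:DiagonalizationPartiallyAnisotropic} (no choice of $\beta$ avoiding the axis is needed); and no condition that $\{\|\xi\|_{\varepsilon'}=\omega\}\subseteq\{\beta=1\}$ is required, because the statement concerns $P(\omega\pm i\delta,D)^{-1}\beta(D)$ for an arbitrary $\beta\in C_c^\infty$ and the delta-distribution term in the limit simply carries the factor $\beta(\xi)$, whatever its support.
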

\begin{proof}
We assume that $\omega > 0$ because $\omega < 0$ can be treated \emph{mutatis mutandis}. Recall the bounds for $e_\delta$ recorded in Proposition \ref{prop:OperatorBounds}, easier bounds for $e_+^{\varepsilon'}$, and the diagonalization from Sections \ref{section:Reduction}, which decompose (cf. Lemmas \ref{lem:Diagonalization2d}, \ref{lem:DiagonalizationPartiallyAnisotropic})
\begin{equation*}
p(\omega,\xi) = m(\xi) d(\omega,\xi) m^{-1}(\xi).
\end{equation*}
By these, \eqref{eq:UniformResolventBounds} follows for $(1/p,1/q) \in \mathcal{P}(d)$ provided that $1<p,q<\infty$ to bound the generalized Riesz transforms. We extend this to all $(1/p,1/q) \in \mathcal{P}(d)$ by Young's inequality: For $(1/p,0) \in \mathcal{P}(d)$ we choose $1<\tilde{q}<\infty$ such that $(1/p,1/\tilde{q})  \in \mathcal{P}(d)$. By Young's inequality and the previously established bounds for $(1/p,1/\tilde{q}) \in \mathcal{P}(d)$ follows
\begin{equation*}
\| P(\omega \pm i \delta,D)^{-1} \beta(D) f \|_{L^\infty_0} \lesssim \| P(\omega \pm i \delta,D)^{-1} \beta(D) f \|_{L^{\tilde{q}}_0} \lesssim \| f \|_{L^p_0}.
\end{equation*}
The case $(1,1/q) \in \mathcal{P}(d)$ is treated by the dual argument.

\medskip

 By Sokhotsky's formula and the diagonalization, we can consider the limiting operators
\begin{equation*}
P_{\pm}(\omega,D) = \lim_{\delta \to 0} P(\omega\pm i \delta,D)^{-1} \beta(D): L^p_0 \to L^q_0
\end{equation*}
whose mapping properties follow again from Proposition \ref{prop:OperatorBounds} and the diagonalization as argued above. We give explicit formulae in Propositions \ref{prop:Explicit2d} and \ref{prop:Explicit3d}; however, these are bulky and recorded in the appendix.
\end{proof}

We are ready for the proof of Theorem \ref{thm:LocalLAP}:
\begin{proof}[Proof of Theorem \ref{thm:LocalLAP}]

 Let $1 \leq p_1, p_2, q \leq \infty$, and $\omega \in \R \backslash 0$. Choose $C=C(\varepsilon,\omega)$ such that $p(\omega,\xi)^{-1}$ is regular for $\| \xi \| \geq C$. Write $J = (J_e,J_m)$ for the sake of brevity. Let $\beta \in C^\infty_c$ with $\beta \equiv 1$ on $\{ \| \xi \| \leq C \}$ and decompose
\begin{equation*}
J
= \beta(D) J + (1-\beta)(D) J
=: J_{low} + J_{high}.
\end{equation*}
By Proposition \ref{prop:LocalLAP}, we find uniform bounds for $0 < \delta < 1/2$
\begin{equation*}
\| P(\omega \pm i \delta, D)^{-1} J_{low} \|_{L_0^q} \lesssim \| J_{low} \|_{L_0^{p_1}}
\end{equation*}
provided that $(\frac{1}{p_1},\frac{1}{q}) \in \mathcal{P}(d)$. The estimate
\begin{equation*}
\| P(\omega \pm i \delta,D)^{-1} J_{high} \|_{L_0^{q}} \lesssim \| J_{high} \|_{L_0^{p_2}}
\end{equation*}
follows for $0 \leq \frac{1}{p_2} - \frac{1}{q} \leq \frac{1}{d}$ and $(\frac{1}{p_2},\frac{1}{q}) \notin \{ (\frac{1}{d},0), (1,\frac{d-1}{d}) \}$ by properties of the Bessel kernel. The limiting operators $P^{loc}_{\pm}(\omega)$ were described in Proposition \ref{prop:LocalLAP}: We have
\begin{equation*}
P(\omega \pm i \delta, D)^{-1} (J_e,J_m) \to P^{loc}_{\pm}(\omega) (J_e,J_m) \text{ in } \mathcal{S}'(\R^d)^{m(d)}.
\end{equation*}
The high frequency is limit is easier to analyze because the multiplier remains regular by construction. Let $M^d \in \C^{m(d) \times m(d)}$ be as in Propositions \ref{prop:Explicit2d} and \ref{prop:Explicit3d}. For $d=2$, let
\begin{equation*}
A = \frac{1}{i(\omega - \| \xi \|_{\varepsilon'})}, \quad B= \frac{1}{i(\omega + \| \xi \|_{\varepsilon'})},
\end{equation*}
and we have
\begin{equation}
\label{eq:2dHighFrequencyLimit}
\begin{split}
P(\omega \pm i \delta,D)^{-1} J_{high} &\to \frac{1}{(2 \pi)^2} \int_{\R^2} e^{ix.\xi} M^2(A,B) (1-\beta(\xi)) \hat{J}(\xi) d\xi \text{ in } (\mathcal{S}'(\R^2))^3 \\
 &=: P^{high}(\omega) J.
\end{split}
\end{equation}
For $d=3$, let
\begin{equation*}
A = \frac{1}{i(\omega - \sqrt{b} \| \xi \|)}, \; B = \frac{1}{i(\omega + \sqrt{b} \| \xi \|)}, \; C = \frac{1}{i(\omega - \| \xi \|_\varepsilon)}, \; D = \frac{1}{i(\omega + \| \xi \|_\varepsilon)},
\end{equation*}
and we have with convergence in $ (\mathcal{S}'(\R^3))^6$ 
\begin{equation}
\label{eq:3dHighFrequencyLimit}
\begin{split}
P(\omega \pm i \delta,D)^{-1} (1-\beta(D)) J &\to \frac{1}{(2 \pi)^3} \int_{\R^3} e^{ix.\xi} M^3(A,B,C,D) (1-\beta(\xi)) \hat{J}(\xi) d\xi \\
 &=: P^{high}(\omega) J.
 \end{split}
\end{equation}
Let $P_{\pm}(\omega) = P_{\pm}^{loc}(\omega) + P^{high}(\omega)$. By Proposition \ref{prop:LocalLAP}, and \eqref{eq:2dHighFrequencyLimit}, \eqref{eq:3dHighFrequencyLimit}, we have
\begin{equation*}
P(\omega \pm i \delta,D)^{-1} J \to P_{\pm}^{loc}(\omega) J + P^{high}(\omega) J \text{ in } (\mathcal{S}'(\R^d))^{m(d)}.
\end{equation*}

Let $(D,B)^{\pm}_\delta = P(\omega \pm i \delta, D)^{-1} J$ and $(D,B)^{\pm} = P_{\pm}(\omega) J$. At last, we show that
\begin{equation}
\label{eq:LimitingSolution}
P(\omega,D)(D,B)^{\pm} = J.
\end{equation}
For this purpose, we show that for $\delta \to 0$ we have
\begin{equation}
\label{eq:Limit}
P(\omega,D) (D,B)^{\pm}_\delta \to J \text{ in } \mathcal{S}'(\R^d)^{m(d)}.
\end{equation}
As $(D,B)^{\pm}_\delta \to (D,B)^{\pm}$ in $\mathcal{S}'(\R^d)^{m(d)}$, \eqref{eq:Limit} concludes the proof.\\
To show \eqref{eq:Limit}, we return to the diagonalizations (cf. Lemmas \ref{lem:Diagonalization2d}, \ref{lem:DiagonalizationPartiallyAnisotropic}):
\begin{equation*}
p(\tilde{\omega},\xi) = i m(\xi) d(\tilde{\omega},\xi) m^{-1}(\xi) \text{ for } \tilde{\omega} \in \C.
\end{equation*}
We find for $\omega \in \R$:
\begin{equation*}
\begin{split}
p(\omega, \xi) p^{-1}(\omega \pm i \delta, \xi) &= m(\xi) d(\omega,\xi) d(\omega \pm i \delta, \xi)^{-1} m^{-1}(\xi) \\
&= m(\xi) ( 1_{m(d) \times m(d)} \pm \delta d(\omega \pm i \delta,\xi)^{-1} ) m^{-1}(\xi) \\
&= 1_{m(d) \times m(d)} \pm \delta p(\omega \pm i \delta, \xi)^{-1}.
\end{split}
\end{equation*}
Hence,
\begin{equation*}
\begin{split}
P(\omega,D) (D,B)^{\pm}_\delta &= J \pm \delta P(\omega \pm i \delta, D)^{-1} J, \\
\| P(\omega, D) (D,B)^{\pm}_\delta - J \|_{L^q_0(\R^d)} &\lesssim \delta \| J \|_{L_0^{p_1} \cap L_0^{p_2}} \to 0.
\end{split}
\end{equation*}
In particular, \eqref{eq:Limit} holds true in $\mathcal{S}'(\R^d)^{m(d)}$.

\medskip

Next, we suppose additionally that $J \in (W^{s-1,q}(\R^d))^{m(d)}$ for $s \geq 1$. By Young's inequality, we have
\begin{equation*}
\| P(\omega \pm i \delta,D)^{-1} \beta(D) J \|_{(W^{s,q}(\R^d))^{m(d)}} \lesssim \| P(\omega \pm i \delta, D)^{-1} \beta(D) J \|_{L^{q}_0(\R^d)}.
\end{equation*}
Hence, the low frequencies can be estimated like before. For the high frequencies, we recall that the multipliers $M^2$ and $M^3$ yield smoothing of one derivative and by Theorem \ref{thm:MultiplierTheorem}, we find
\begin{equation*}
\begin{split}
&\qquad \| P(\omega \pm i \delta, D)^{-1} (1-\beta(D)) J \|_{(W^{s,q}(\R^d))^{m(d)}} \\
&\lesssim \| \frac{(1-\Delta)^{s/2}}{(1-\Delta)^{1/2}} (1-\beta(D)) J \|_{(L^q(\R^d))^{m(d)}} \\
&\lesssim \| (1-\Delta)^{(s-1)/2} (1-\beta(D)) J \|_{(L^q(\R^d))^{m(d)}} \\
 &= \| (1-\beta(D)) J \|_{(W^{s-1,q}(\R^d))^{m(d)}}.
\end{split}
\end{equation*}
The proof of Theorem \ref{thm:LocalLAP} is complete.
\end{proof}

\section{Localization of Eigenvalues}
\label{section:Localization}
At last, we use the $\omega$-dependent resolvent estimates to localize eigenvalues for operators $P(\omega,D) + V$ acting in $L^q$. For this purpose, we consider for $\ell > 0$ and $(1/p,1/q) \in \tilde{\mathcal{R}}_0^{\frac{1}{2}}$ the region, where uniform resolvent estimates are possible:
\begin{equation}
\label{eq:UniformEstimate}
\begin{split}
\mathcal{Z}_{p,q}(\ell) &= \{ \omega \in \C \backslash \R \; : \; \kappa_{p,q}(\omega) \leq \ell \} \\
&= \{ \omega \in \C \backslash \R \; : \; |\omega|^{-\alpha_{p,q}} |\omega|^{\gamma_{p,q}} | \Im \omega |^{-\gamma_{p,q}} \leq \ell \}, \quad \alpha_{p,q} = 1 - d \big( \frac{1}{p} - \frac{1}{q} \big).
\end{split}
\end{equation}
Describing the regions, we start with observing the symmetry in the real and imaginary part. For $\alpha_{p,q} = 0$, $\ell < 1$, we find $\mathcal{Z}_{p,q}(\ell) = \emptyset$. For $\ell \geq 1$, $\mathcal{Z}_{p,q}(\ell)$ describes a cone around the $y$-axis with aperture getting larger. For $\alpha_{p,q} > 0$ the boundaries become slightly curved.
Pictorial representations for $\Re \omega > 0$ were provided in \cite[Figures~9~(a)-(c)]{KwonLee2020}. The region in the left half plane is obtained by reflection along the imaginary axis. We shall see that eigenvalues of $P(\omega,D) + V$ must lie in $\C \backslash \mathcal{Z}_{p,q}(\ell)$. Previously in \cite{Frank2018}, for non-self-adjoint Schr\"odinger operators analogous arguments were used to show that in a range of $(p,q)$, a sequence of eigenvalues $\lambda_j$ with $\Re \lambda_j \to \infty$ has to satisfy $\Im \lambda_j \to 0$ as a consequence of the shape of $\mathcal{Z}_{p,q}(\ell)$. This is not the case presently and the shape of $\mathcal{Z}_{p,q}(\ell)$ only yields a bound for the asymptotic growth of $|\Im \lambda_j|$ as $|\Re \lambda_j| \to \infty$. This also raises the question for counterexamples, where the behavior $\Re \lambda_j \to \infty$ and $\Im \lambda_j \to 0$ fails. We also refer to Cuenin \cite{Cuenin2017} for resolvent estimates for the fractional Laplacian in this context.

Let $C$ be the constant such that
\begin{equation}
\label{eq:ConstantResolventEstimate}
\| P(\omega,D)^{-1} \|_{L_0^p(\R^d) \to L_0^q(\R^d)} \leq C \kappa_{p,q}(\omega).
\end{equation}

\begin{corollary}
Let $d \in \{2,3\}$, $\ell > 0$, and $1<p,q<\infty$ such that $(1/p,1/q) \in \tilde{\mathcal{R}}_0^{1/2}$. Suppose that there is $t \in (0,1)$ such that
\begin{equation*}
\| V \|_{\frac{pq}{q-p}} \leq t (C \ell)^{-1}.
\end{equation*}
If $E \in \C \backslash \R$ is an eigenvalue of $P+V$ acting in $L^q_0$, then $E$ must lie in $\C \backslash \mathcal{Z}_{p,q}(\ell)$.
\end{corollary}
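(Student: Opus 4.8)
The plan is to run a Birman--Schwinger type argument by contradiction: assume $E\in\mathcal{Z}_{p,q}(\ell)$ is an eigenvalue of $P+V$ on $L^q_0$, produce a nonzero eigenfunction, and use the resolvent bound \eqref{eq:ConstantResolventEstimate} together with the smallness of $V$ to force the eigenfunction to vanish.

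First I would fix a nonzero $u\in L^q_0(\R^d)$ with $(P(E,D)+V)u=0$, i.e.\ $P(E,D)u=-Vu$ in $(\mathcal{S}'(\R^d))^{m(d)}$. Since $(1/p,1/q)\in\tilde{\mathcal{R}}_0^{1/2}$ forces $1/p-1/q\ge 0$, Hölder's inequality with $\frac1p=\frac1q+\frac{q-p}{pq}$ shows that multiplication by $V$ maps $L^q(\R^d)^{m(d)}$ into $L^p(\R^d)^{m(d)}$ with $\|Vu\|_{L^p}\le\|V\|_{L^{pq/(q-p)}}\|u\|_{L^q}$. One then checks that $Vu=-P(E,D)u$ again lies in the solenoidal subspace: the divergence of each half of $P(E,D)u$ vanishes because $P(E,D)$ only feeds curls (resp.\ $\nabla_\perp$) and $iE$ times the already solenoidal components of $u$ into its output, so $Vu\in L^p_0(\R^d)$ and $P(E,D)^{-1}Vu$ is meaningful.

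Next, since the symbol $p(E,\xi)$ is invertible for $E\in\C\backslash\R$ (cf.\ the diagonalization of Section~\ref{section:Reduction}, whose eigenvalues are $iE$ and $i(E\pm\|\xi\|_{\varepsilon'})$, etc., none vanishing when $\Im E\neq0$), applying $P(E,D)^{-1}$ to $P(E,D)u=-Vu$ gives $u=-P(E,D)^{-1}Vu$ on the Fourier side. Combining this with \eqref{eq:ConstantResolventEstimate} and the Hölder bound,
\begin{equation*}
\|u\|_{L^q_0}=\|P(E,D)^{-1}Vu\|_{L^q_0}\le C\kappa_{p,q}(E)\,\|Vu\|_{L^p_0}\le C\kappa_{p,q}(E)\,\|V\|_{L^{pq/(q-p)}}\,\|u\|_{L^q_0}.
\end{equation*}
If $E\in\mathcal{Z}_{p,q}(\ell)$ then $\kappa_{p,q}(E)\le\ell$ by \eqref{eq:UniformEstimate}, so the right-hand side is bounded by $C\ell\cdot t(C\ell)^{-1}\|u\|_{L^q_0}=t\|u\|_{L^q_0}$ with $t\in(0,1)$; since $0<\|u\|_{L^q_0}<\infty$, this is a contradiction. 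Hence every eigenvalue $E\in\C\backslash\R$ of $P+V$ lies in $\C\backslash\mathcal{Z}_{p,q}(\ell)$.

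The only genuinely delicate point is the bookkeeping around the solenoidal constraint and the a priori class of the eigenfunction: one must make sure that "$E$ is an eigenvalue of $P+V$ acting in $L^q_0$" really does supply a nonzero $u\in L^q_0$ for which $P(E,D)u=-Vu$ holds in $(\mathcal{S}'(\R^d))^{m(d)}$, and that $Vu\in L^p_0$ so that Corollary~\ref{cor:ResolventBounds} (via \eqref{eq:ConstantResolventEstimate}) applies verbatim. Everything else is Hölder's inequality plus the resolvent estimate already established.
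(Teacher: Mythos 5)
Your proposal is correct and follows essentially the same route as the paper's own proof: identify $Vu=-P(E,D)u\in L^p$ via H\"older, apply the $\omega$-dependent resolvent bound \eqref{eq:ConstantResolventEstimate} with $\kappa_{p,q}(E)\le\ell$ on $\mathcal{Z}_{p,q}(\ell)$, and conclude $\|u\|_{L^q}\le t\|u\|_{L^q}$, forcing $u=0$. Your extra check that $P(E,D)u$ (hence $Vu$) remains divergence-free is a welcome detail the paper leaves implicit, but the argument is the same.
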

\begin{proof}
The short argument is standard by now (cf. \cite{KwonLee2020,KwonLeeSeo2021}), but contained for the sake of completeness. Let $u \in L_0^q(\R^d)$ be an eigenfunction of $P+V$ with eigenvalue $E \in \C \backslash \R$ and suppose that $E \in \mathcal{Z}_{p,q}(\ell)$. By H\"older's inequality, we find $-(P-E)u = (V-(P-E+V))u = Vu \in L^p$. By definition of $\mathcal{Z}_{p,q}(\ell)$, we find
\begin{equation*}
\| (P-E)^{-1} \|_{p \to q} \leq C \kappa_{p,q}(E) \leq C \ell.
\end{equation*}
By the triangle and H\"older's inequality, we find
\begin{equation*}
\|(P-E)^{-1}(P-E) u \|_q \leq C \ell ( \| (P-E+V) u \|_p + \| V u \|_p ) \leq C \ell \| V \|_{\frac{pq}{q-p}} \| u \|_q \leq t \| u \|_q,
\end{equation*}
which implies $u=0$ as $t<1$. Hence, $E \notin \mathcal{Z}_{p,q}(\ell)$.
\end{proof}

\section{Appendix}
\begin{lemma}
\label{lem:ComputationDeterminant}
With the notations from Section \ref{subsection:3d}, let $m(\xi)$ be as in \eqref{eq:AuxiliaryMatrix} and $\alpha(\xi)$ as in \eqref{eq:RenormalizationQuantities}. Then, we have
\begin{equation*}
|\det m(\xi)| \sim \alpha^4(\xi).
\end{equation*}
\end{lemma}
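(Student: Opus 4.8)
The plan is to compute $\det m(\xi)$ directly by a sequence of column and row operations that exploit the block structure visible in the eigenvector list $v_1,\dots,v_6$. Write $m(\xi) = (v_1 \mid v_2 \mid v_3 \mid v_4 \mid v_5 \mid v_6)$. Observe first that $v_3 + v_4 = (0,0,0,-2((\xi_2')^2+(\xi_3')^2),2\xi_1'\xi_2',2\xi_1'\xi_3')$ lives entirely in the last three coordinates, while $v_3 - v_4$ lives in coordinates $2,3$; similarly $v_5+v_6$ lives in coordinates $5,6$ and $v_5-v_6$ lives in coordinates $1,2,3$. So after the column operations $v_3 \mapsto \tfrac12(v_3+v_4)$, $v_4 \mapsto \tfrac12(v_3-v_4)$, $v_5 \mapsto \tfrac12(v_5+v_6)$, $v_6 \mapsto \tfrac12(v_5-v_6)$ (which multiply the determinant by a harmless nonzero constant depending only on $a,b$), the matrix acquires a near block-triangular form with respect to the coordinate split $\{1,2,3\}\sqcup\{4,5,6\}$: columns $1$ (that is $v_2$), $4$ (the new $\tfrac12(v_3-v_4)$) and $6$ (the new $\tfrac12(v_5-v_6)$) are supported in $\{1,2,3\}$, and columns $2$ (that is $v_1$), $3$ and $5$ are supported in $\{4,5,6\}$.

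Reordering columns and rows to group the two blocks, the determinant factors (up to sign) as the product of two $3\times 3$ determinants. The first is
\[
\det\begin{pmatrix} \tilde\xi_1/a & \ast & \tilde\xi_2^2+\tilde\xi_3^2 \\ \tilde\xi_2/b & -\xi_3'/\sqrt b & -\tilde\xi_1\tilde\xi_2 \\ \tilde\xi_3/b & \xi_2'/\sqrt b & -\tilde\xi_1\tilde\xi_3 \end{pmatrix},
\]
coming from $v_2$, $\tfrac12(v_3-v_4)$, $\tfrac12(v_5-v_6)$ restricted to coordinates $1,2,3$, and the second is the analogous $3\times 3$ determinant built from $v_1$, $\tfrac12(v_3+v_4)$, $\tfrac12(v_5+v_6)$ restricted to coordinates $4,5,6$. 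Each of these is a concrete polynomial/rational expression in $\xi'$ and $\tilde\xi$ that one expands along a convenient row. I expect each factor to evaluate, after using $\|\xi'\|=1$ and the definition $\tilde\xi = \xi/\|\xi\|_\varepsilon$, to a constant (depending on $a,b$) times $\alpha^2(\xi) = (\xi_2^2+\xi_3^2)/(\|\xi\|\,\|\xi\|_\varepsilon)$ — note $(\xi_2')^2+(\xi_3')^2 = \alpha^2 \cdot \|\xi\|_\varepsilon/\|\xi\| \cdot \dots$, so tracking the homogeneous factors carefully is what produces the power $\alpha^2$ in each block. Multiplying the two factors then yields $|\det m(\xi)| \sim \alpha^4(\xi)$, with the implicit constants depending only on $a$, $b$ (equivalently on $\varepsilon$), which is exactly the claim.

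The main obstacle is purely computational bookkeeping: keeping the two homogeneity scales $\|\xi\|$ (entering through $\xi'$) and $\|\xi\|_\varepsilon$ (entering through $\tilde\xi$) straight while expanding the $3\times 3$ determinants, and verifying that the cross terms organize into the single factor $(\xi_2^2+\xi_3^2)$ rather than into some other quadratic form in $\xi_2,\xi_3$ (this is where the specific structure $\varepsilon = \mathrm{diag}(a^{-1},b^{-1},b^{-1})$, i.e.\ the degeneracy $\varepsilon_2 = \varepsilon_3$, is essential — it is what makes $\tilde\xi_2^2+\tilde\xi_3^2$ proportional to $(\xi_2^2+\xi_3^2)$ times a radial factor). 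A clean way to control this is to pass to coordinates adapted to the $\xi_1$-axis, writing $\xi = (\xi_1, r\cos\theta, r\sin\theta)$ with $r^2 = \xi_2^2+\xi_3^2$; then $\alpha^2 = r^2/(\|\xi\|\,\|\xi\|_\varepsilon)$ and every entry of $m$ becomes a function of $\xi_1, r$ and the unit vector $(\cos\theta,\sin\theta)$, and the $\theta$-dependence can be checked to drop out of each $3\times 3$ block determinant by the rotational symmetry in the $(\xi_2,\xi_3)$-plane. I would then just evaluate at $\theta = 0$ (i.e.\ $\xi_3 = 0$), reducing each block to a $3\times 3$ determinant in the two variables $\xi_1, r$ that can be expanded by hand, and read off the power of $r$ (hence of $\alpha$) and the constant.
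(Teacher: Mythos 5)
Your proposal follows essentially the same route as the paper: there, too, one takes the linear combinations of columns three/four and five/six of $m(\xi)$, regroups into two $3\times 3$ blocks supported on the coordinate sets $\{1,2,3\}$ and $\{4,5,6\}$, and evaluates each block determinant to be, up to constants depending on $a,b$, equal to $\xi_2'\tilde{\xi}_2+\xi_3'\tilde{\xi}_3=\alpha^2(\xi)$, giving $|\det m(\xi)|\sim\alpha^4(\xi)$. The evaluation you leave as an ``expectation'' does check out (your rotation-invariance reduction to $\xi_3=0$ yields the factors $\xi_2'\tilde{\xi}_2$ and $\frac{1}{ab\sqrt{b}}\,\xi_2'\tilde{\xi}_2$), so the argument is correct and matches the paper's proof in substance.
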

\begin{proof}
We compute the determinant by taking linear combinations of the third and fourth column and fifth and sixth column, and aligning the columns as block matrices:
\begin{equation*}
\det m(\xi) = 
\begin{vmatrix}
0 & \tilde{\xi}_1/a & 0 & 0 & \tilde{\xi}_2^2 + \tilde{\xi}_3^2 & -(\tilde{\xi}_2^2 + \tilde{\xi}_3^2) \\
0 & \tilde{\xi}_2/b & - \xi_3'/\sqrt{b} & \xi_3'/\sqrt{b} & - \tilde{\xi}_1 \tilde{\xi}_2 & \tilde{\xi}_1 \tilde{\xi}_2 \\
0 & \tilde{\xi}_3/b & \xi_2'/\sqrt{b} & - \xi_2'/\sqrt{b} & - \tilde{\xi}_1 \tilde{\xi}_3 & \tilde{\xi}_1 \tilde{\xi}_3 \\
\xi_1' & 0 & -((\xi_2')^2 + (\xi_3')^2) & - ((\xi_2')^2 + (\xi_3')^2) & 0 & 0 \\
\xi_2' & 0 & \xi_1' \xi_2' & \xi_1' \xi_2' & - \tilde{\xi}_3 & - \tilde{\xi}_3 \\
\xi_3' & 0 & \xi_1' \xi_3' & \xi_1' \xi_3' & \tilde{\xi}_2 & \tilde{\xi}_3
\end{vmatrix}
\end{equation*}
\begin{equation*}
\begin{split}
&\sim 
\begin{vmatrix}
0 & \tilde{\xi}_1/a & 0 & 0 & \tilde{\xi}_2^2 + \tilde{\xi}_3^2 & 0 \\
0 & \tilde{\xi}_2/b & 0 & - \xi_3'/\sqrt{b} & - \tilde{\xi}_1 \tilde{\xi}_2 & 0 \\
0 & \tilde{\xi}_3/b & 0 & - \xi_2'/\sqrt{b} & - \tilde{\xi}_1 \tilde{\xi}_3 & 0 \\
\xi_1' & 0 & (\xi_2')^2 + (\xi_3')^2 & 0 & 0 & 0 \\
\xi_2' & 0 & -\xi_1' \xi_2' & 0 & 0 & - \tilde{\xi}_3 \\
\xi_3' & 0 & -\xi_1' \xi_3' & 0 & 0 & \tilde{\xi}_3
\end{vmatrix}
\\
&\sim 
\begin{vmatrix}
\tilde{\xi}_1/a & 0 & \tilde{\xi}_2^2+ \tilde{\xi}_3^2 & 0 & 0 & 0 \\
\tilde{\xi}_2/b & - \xi_3'/\sqrt{b} & - \tilde{\xi}_1 \tilde{\xi}_2 & 0 & 0 & 0 \\
\tilde{\xi}_3/b & \xi_2'/\sqrt{b} & - \tilde{\xi}_1 \tilde{\xi}_3 & 0 & 0 & 0 \\
0 & 0 & 0 & \xi_1' & (\xi_2')^2 + (\xi_3')^2 & 0 \\
0 & 0 & 0 & \xi_2' & - \xi_1' \xi_2' & - \tilde{\xi}_3 \\
0 & 0 & 0 & \xi_3' & - \xi_1' \xi_3' & \tilde{\xi}_2 
\end{vmatrix}
=: A_2 \cdot A_1.
\end{split}
\end{equation*}
We find by noting that $(\xi_1')^2 + (\xi_2')^2 + (\xi_3')^2 = 1$
\begin{equation*}
\begin{split}
A_1 &\sim 
\begin{vmatrix}
(\xi_2')^2 + (\xi_3')^2 & - \xi_1' \xi_2' & - \xi_1' \xi_3' \\
\xi_1' & \xi_2' & \xi_3' \\
0 & - \tilde{\xi}_3 & \tilde{\xi}_2
\end{vmatrix}
=
\begin{vmatrix}
1-(\xi_1')^2 & - \xi_1' \xi_2' & - \xi_1' \xi_3' \\
\xi_1' & \xi_2' & \xi_3' \\
0 & - \tilde{\xi}_3 & \tilde{\xi}_2
\end{vmatrix}
\\
&=
\begin{vmatrix}
1 & 0 & 0 \\
\xi_1' & \xi_2' & \xi_3' \\
0 & -\tilde{\xi}_3 & \tilde{\xi}_2
\end{vmatrix}
-\xi_1'
\begin{vmatrix}
\xi_1' & \xi_2' & \xi_3' \\
\xi_1' & \xi_2' & \xi_3' \\
0 & - \tilde{\xi}_3 & \tilde{\xi}_2
\end{vmatrix}
= \xi_2' \tilde{\xi}_2 + \xi_3' \tilde{\xi}_3.
\end{split}
\end{equation*}
Next, by a similar argument,
\begin{equation*}
\begin{split}
A_2 &\sim
\begin{vmatrix}
\tilde{\xi}_1/a & \tilde{\xi}_2/b & \tilde{\xi}_3/b \\
0 & - \xi_3'/\sqrt{b} & \xi_2'/\sqrt{b} \\
\tilde{\xi}_2^2 + \tilde{\xi}_3^2 & - \tilde{\xi}_1 \tilde{\xi}_2 & - \tilde{\xi}_1 & \tilde{\xi}_3 
\end{vmatrix}
= \frac{1}{a}
\begin{vmatrix}
\tilde{\xi}_1/a & \tilde{\xi}_2/b & \tilde{\xi}_3/b \\
0 & - \xi_3'/\sqrt{b} & \xi_2'/\sqrt{b} \\
a(\tilde{\xi}_2^2 + \tilde{\xi}_3^2) & - a \tilde{\xi}_1 \tilde{\xi}_2 & - a \tilde{\xi}_1 \tilde{\xi}_3
\end{vmatrix}
\\
&= \frac{1}{a}
\begin{vmatrix}
\tilde{\xi}_1/a & \tilde{\xi}_2/b & \tilde{\xi}_3/b \\
0 & - \xi_3'/\sqrt{b} & \xi_2'/\sqrt{b} \\
b \tilde{\xi}_1^2 + a(\tilde{\xi}_2^2 + \tilde{\xi}_3^2) - b \tilde{\xi}_1^2 & - a \tilde{\xi}_1 \tilde{\xi}_2 & - a \tilde{\xi}_1 \tilde{\xi}_3 
\end{vmatrix}.
\end{split}
\end{equation*}
We use multilinearity to write
\begin{equation*}
\begin{split}
A_2 &\sim \frac{1}{a}
\left(
\begin{vmatrix}
\tilde{\xi}_1/a & \tilde{\xi}_2/b & \tilde{\xi}_3/ b \\
0 & - \xi_3'/\sqrt{b} & \xi'_2/\sqrt{b} \\
1 & 0 & 0
\end{vmatrix}
-
\begin{vmatrix}
\tilde{\xi}_1/a & \tilde{\xi}_2/b & \tilde{\xi}_3/b \\
0 & - \xi_3'/\sqrt{b} & \xi_2'/\sqrt{b} \\
-b \tilde{\xi}_1^2 & - a \tilde{\xi}_1 \tilde{\xi}_2 & - a \tilde{\xi}_1 \tilde{\xi}_3
\end{vmatrix}
\right)
\\
&\sim 
\begin{vmatrix}
1 & 0 & 0 \\
0 & -\xi_3'/\sqrt{b} & \xi_2'/\sqrt{b} \\
\tilde{\xi}_1/a & \tilde{\xi}_2/b & \tilde{\xi}_3/b
\end{vmatrix}
\sim (\xi_3' \tilde{\xi}_3 + \xi_2' \tilde{\xi}_2).
\end{split}
\end{equation*}
\end{proof}
In the following we give explicit formulae for the resolvents and for limiting operators in two dimensions:
\begin{proposition}
\label{prop:Explicit2d}
Let $d=2$ and
\begin{equation*}
M^2(A,B) = 
\begin{pmatrix}
\frac{A+B}{2 \mu} ((\xi_2')^2 \varepsilon_{11} - (\xi_1' \xi_2') \varepsilon_{12}) & \frac{A+B}{2 \mu}((\xi_2')^2 \varepsilon_{21} - \xi_1' \xi_2' \varepsilon_{22}) & \frac{\xi_2'}{2 \mu}(A-B) \\
\frac{A+B}{2 \mu}((\xi_1')^2 \varepsilon_{21} - \xi_1' \xi_2' \varepsilon_{11}) & \frac{A+B}{2 \mu}((\xi_1')^2 \varepsilon_{22} - \varepsilon_{12} (\xi_1') (\xi_2')) & \frac{\xi_1'}{2 \mu}(B-A) \\
\frac{A-B}{2}(\xi_2' \varepsilon_{11} - \xi_1' \varepsilon_{21}) & \frac{B-A}{2} (\xi_1' \varepsilon_{22} - \xi_2' \varepsilon_{21}) & \frac{A+B}{2}
\end{pmatrix},
\end{equation*}
furthermore,
\begin{equation*}
M^2_{c} = \frac{1}{i \omega \mu}
\begin{pmatrix}
\varepsilon_{22} (\xi_1')^2 - \varepsilon_{12} \xi_1' \xi_2' & \varepsilon_{22} \xi_1' \xi_2' - \varepsilon_{12} (\xi_2')^2 & 0 \\
\varepsilon_{11} \xi_1' \xi_2' - \varepsilon_{12} (\xi_1')^2 & \varepsilon_{11} (\xi_2')^2 - \varepsilon_{12} \xi_1' \xi_2' & 0 \\
0 & 0 & 0
\end{pmatrix}
.
\end{equation*}
Then, we have for $\omega \in \C \backslash \R$ and almost all $\xi \in \R^2$:
\begin{equation*}
(P(\omega,D)^{-1} u) \widehat (\xi) = (M^2(A,B) + M^2_c) \hat{u}(\xi)
\end{equation*}
with 
\begin{equation*}
A= \frac{1}{i(\omega - \| \xi \|_{\varepsilon'})}, \quad B = \frac{1}{i(\omega + \| \xi \|_{\varepsilon'})}.
\end{equation*}

\medskip

For $\omega > 0$, $\beta \in C^\infty_c(\R^2)$, and $u \in \mathcal{S}(\R^2)^3$, we find
\begin{equation*}
P(\omega \pm i \delta,D)^{-1} \beta(D) u \to P^{loc}_{\pm}(\omega) \beta(D) u
\end{equation*}
with
\begin{equation*}
P_{\pm}^{loc}(\omega) \beta(D) u (x) = \frac{1}{(2 \pi)^2} \int_{\R^2} e^{ix.\xi} (M^2(A,B)+M^2_c)
\beta(\xi) \hat{u}(\xi), 
\end{equation*}
where
\begin{equation*}
A= \frac{1}{i} \{ v.p. \frac{1}{\omega - \| \xi \|_{\varepsilon'}} \mp i \pi \delta(\omega - \| \xi \|_{\varepsilon'}) \}, \quad B = \frac{1}{i(\omega + \| \xi \|_{\varepsilon'})}.
\end{equation*}
\end{proposition}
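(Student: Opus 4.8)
\emph{Proof proposal.} The plan is to read both formulae off the diagonalization $p(\omega,\xi)=m(\xi)\,d(\omega,\xi)\,m^{-1}(\xi)$ from Lemma \ref{lem:Diagonalization2d}, using the explicit matrices \eqref{eq:Eigenvectors} and \eqref{eq:InverseEigenvectors} together with Sokhotsky's formula. First I would invert the diagonalization: for $\omega\in\C\setminus\R$ one has $p^{-1}(\omega,\xi)=m(\xi)\,d^{-1}(\omega,\xi)\,m^{-1}(\xi)$ with, by \eqref{eq:Eigenvalues},
\[
d^{-1}(\omega,\xi)=\text{diag}\Big(\tfrac{1}{i\omega},\,A,\,B\Big),\qquad A=\tfrac{1}{i(\omega-\|\xi\|_{\varepsilon'})},\quad B=\tfrac{1}{i(\omega+\|\xi\|_{\varepsilon'})}.
\]
Splitting $d^{-1}=\tfrac{1}{i\omega}E_{11}+A\,E_{22}+B\,E_{33}$ into rank-one idempotents, the product becomes $p^{-1}(\omega,\xi)=\tfrac{1}{i\omega}\,c_1 r_1+A\,c_2 r_2+B\,c_3 r_3$, where $c_j$ is the $j$-th column of $m(\xi)$ and $r_j$ the $j$-th row of $m^{-1}(\xi)$.

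Next I would identify the three pieces. Multiplying out $c_1 r_1$ with $c_1=(\varepsilon_{22}\xi_1'-\varepsilon_{12}\xi_2',\ \varepsilon_{11}\xi_2'-\varepsilon_{12}\xi_1',\ 0)^t$ and $r_1=\mu^{-1}(\xi_1',\xi_2',0)$ gives exactly $i\omega\,M^2_c$, which proves the $M^2_c$ part and, since $M^2_c=(i\omega)^{-1}c_1 r_1$ with $r_1\cdot v$ a multiple of $\xi_1 v_1+\xi_2 v_2$, also the degeneracy claim $M^2_c v=0$ for $\xi_1 v_1+\xi_2 v_2=0$ (consistent with \eqref{eq:Decomposition2d}). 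For the other two pieces one observes from \eqref{eq:Eigenvectors}, \eqref{eq:InverseEigenvectors} and the symmetry $\varepsilon_{12}=\varepsilon_{21}$ that $c_3$ and $r_3$ are obtained from $c_2$ and $r_2$ by negating the first two components and keeping the third; hence $A\,c_2 r_2+B\,c_3 r_3$ has entries carrying the coefficients $A+B$ (indices in $\{1,2\}^2$), $B-A$ or $A-B$ (one index equal to $3$), and $\tfrac{A+B}{2}$ (the $(3,3)$ entry), which matches $M^2(A,B)$ after inserting the $\xi'$-normalizations. This is a routine $3\times3$ multiplication, so I would only indicate it.

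For the limiting operator I would invoke Proposition \ref{prop:LocalLAP}, which already guarantees that $P(\omega\pm i\delta,D)^{-1}\beta(D)$ converges, so it remains to name the limit. Replacing $\omega$ by $\omega\pm i\delta$ in the formula above, the symbol is $(M^2(A_\delta,B_\delta)+M^2_{c,\delta})(\xi)\beta(\xi)$ with $A_\delta=\tfrac1i((\omega-\|\xi\|_{\varepsilon'})\pm i\delta)^{-1}$, $B_\delta=\tfrac1i((\omega+\|\xi\|_{\varepsilon'})\pm i\delta)^{-1}$. For $\omega>0$ the quantity $\omega+\|\xi\|_{\varepsilon'}$ is bounded away from $0$ on $\text{supp}\,\beta$, so $B_\delta\to\tfrac{1}{i(\omega+\|\xi\|_{\varepsilon'})}$ and $M^2_{c,\delta}\to M^2_c$ uniformly there, while Sokhotsky's formula yields $A_\delta\to\tfrac1i\{v.p.\tfrac{1}{\omega-\|\xi\|_{\varepsilon'}}\mp i\pi\,\delta(\omega-\|\xi\|_{\varepsilon'})\}$ in $\mathcal{S}'$. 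Since the remaining symbol factors ($\xi_i'$ and the zero-homogeneous ratios) are bounded and smooth on $\text{supp}\,\beta$ and $\beta\hat u$ is Schwartz with compact support, testing against $\phi\in\mathcal{S}(\R^2)^3$ and passing the limit through the integral (dominated convergence for the $v.p.$-regular part, the distributional limit for the surface term) gives convergence to $P^{loc}_\pm(\omega)\beta(D)u$ in $(\mathcal{S}'(\R^2))^3$ with the stated formula.

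The only real obstacle is bookkeeping: carrying out $m\,d^{-1}\,m^{-1}$ entry by entry and matching it against the displayed $M^2(A,B)$ and $M^2_c$ — keeping track of the $\xi'$-normalizations and the $(A\pm B)/2$ groupings — is lengthy, which is why it is placed in the appendix; there is no conceptual difficulty once Lemma \ref{lem:Diagonalization2d} and Sokhotsky's formula are available.
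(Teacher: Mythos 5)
Your proposal is correct and follows essentially the paper's own route: writing $p^{-1}=m\,d^{-1}m^{-1}$ from Lemma \ref{lem:Diagonalization2d}, isolating the degenerate $i\omega$-eigenvalue contribution (your rank-one piece $\tfrac{1}{i\omega}c_1r_1$ is exactly the paper's splitting $m^{-1}=m_1+m_2$ with $M^2_c=m\,d^{-1}m_2$), multiplying out the remaining two pieces to obtain $M^2(A,B)$, and then obtaining the limiting operators by Sokhotsky's formula applied to the factor $A$ while the $B$- and $M^2_c$-factors converge trivially for $\omega>0$. The symmetry observation relating columns/rows $2$ and $3$ is just a tidier way of doing the same $3\times 3$ bookkeeping the paper displays explicitly, so no substantive difference or gap.
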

\begin{proof}
The first claim follows from computing $p^{-1}(\omega,\xi)$ (cf. Lemma \ref{lem:Diagonalization2d}). We decompose
\begin{equation*}
\begin{split}
m^{-1}(\xi) &= m_1(\xi) + m_2(\xi) \\
&=
\begin{pmatrix}
0 & 0 & 0 \\
\frac{\xi_1' \varepsilon_{21} - \xi_2' \varepsilon_{11}}{2} & \frac{\varepsilon_{22} \xi_1' - \varepsilon_{21} \xi_2'}{2} & - \frac{1}{2} \\
\frac{\xi_2' \varepsilon_{11} - \xi_1' \varepsilon_{12}}{2} & \frac{\xi_2' \varepsilon_{12} - \xi_1' \varepsilon_{22}}{2} & - \frac{1}{2}
\end{pmatrix}
+
\begin{pmatrix}
\mu^{-1} \xi_1' & \mu^{-1} \xi_2' & 0 \\
0 & 0 & 0 \\
0 & 0 & 0
\end{pmatrix}
\end{split}
\end{equation*}
based on the observation that $m_2(\xi) v(\xi) = 0$ for $\xi_1 v_1(\xi) + \xi_2 v_2(\xi) = 0$. We compute for $A$ and $B$ as in the first claim:
\begin{equation*}
M^2(A,B) = m(\xi) d(\omega,\xi)^{-1} m_1(\xi).
\end{equation*}
The computation is simplified by noting that:
\begin{equation*}
\begin{pmatrix}
0 & 0 & 0 \\
0 & A & 0 \\
0 & 0 & B
\end{pmatrix}
\begin{pmatrix}
0 & 0 & 0 \\
\frac{\xi_1' \varepsilon_{21} - \xi_2' \varepsilon_{11}}{2} & \frac{\varepsilon_{22} \xi_1' - \varepsilon_{21} \xi_2'}{2} & - \frac{1}{2} \\
\frac{\xi_2' \varepsilon_{11} - \varepsilon_{12} \xi_1'}{2} & \frac{\xi_2' \varepsilon_{12} - \xi_1' \varepsilon_{22}}{2} & - \frac{1}{2}
\end{pmatrix}
=
\begin{pmatrix}
0 & 0 & 0 \\
\frac{ A( \xi_1' \varepsilon_{21} - \xi_2' \varepsilon_{11})}{2} & \frac{A (\varepsilon_{22} \xi_1' - \varepsilon_{21} \xi_2'}{2} & - \frac{A}{2} \\
\frac{B(\xi_2' \varepsilon_{11} - \varepsilon_{12} \xi_1')}{2} & \frac{B(\xi_2' \varepsilon_{12} - \xi_1' \varepsilon_{22})}{2} & - \frac{B}{2}
\end{pmatrix}
.
\end{equation*}
We find for $m(\xi) d(\omega,\xi)^{-1} m_1(\xi)$:
\begin{equation*}
\begin{pmatrix}
\frac{A+B}{2 \mu} ((\xi_2')^2 \varepsilon_{11} - (\xi_1' \xi_2') \varepsilon_{12}) & \frac{A+B}{2 \mu}((\xi_2')^2 \varepsilon_{21} - \xi_1' \xi_2' \varepsilon_{22}) & \frac{\xi_2'}{2 \mu}(A-B) \\
\frac{A+B}{2 \mu}((\xi_1')^2 \varepsilon_{21} - \xi_1' \xi_2' \varepsilon_{11}) & \frac{A+B}{2 \mu}((\xi_1')^2 \varepsilon_{22} - \varepsilon_{12} (\xi_1') (\xi_2')) & \frac{\xi_1'}{2 \mu}(B-A) \\
\frac{A-B}{2}(\xi_2' \varepsilon_{11} - \xi_1' \varepsilon_{21}) & \frac{B-A}{2} (\xi_1' \varepsilon_{22} - \xi_2' \varepsilon_{21}) & \frac{A+B}{2}
\end{pmatrix}
\end{equation*}
In $M^2_c = m(\xi) d(\omega,\xi)^{-1} m_2(\xi)$ we have separated the contribution of non-trivial charges. The second claim follows with the same computation from Sokhotsky's formula.
\end{proof}

For $d=3$, we define $M^3(A,B,C,D) \in \C^{6 \times 6}$:
\begin{align*}
M^3_{11} &= \frac{a(C+D)(\tilde{\xi}_2^2 + \tilde{\xi}_3^2)}{2}, \quad M^3_{12} = -\frac{b(C+D)\tilde{\xi}_1 \tilde{\xi}_2}{2}  , M^3_{13} = - \frac{b(C+D)\tilde{\xi}_1 \tilde{\xi}_3 }{2}, \\
 M^3_{14} &= 0, \quad M^3_{15} = \frac{(D-C)\tilde{\xi}_3}{2}, \quad M^3_{16} = \frac{(C-D)\tilde{\xi}_2}{2}.
\end{align*}
Furthermore,
\begin{align*}
M^3_{21} &= - \frac{a(C+D)\tilde{\xi}_1 \tilde{\xi}_2}{2} , \quad M^3_{22} = \frac{(A+B)\xi_3^2}{2(\xi_2^2 + \xi_3^2)} + \frac{b(C + D)\tilde{\xi}_1^2 \xi_2^2}{2(\xi_2^2+ \xi_3^2)} , \\
M^3_{23} &= - \frac{(A+B)\xi_2 \xi_3}{2(\xi_2^2 + \xi_3^2)} + \frac{b(C+D)\tilde{\xi}^2_1 \xi_2 \xi_3}{2(\xi_2^2 + \xi_3^2)}, \quad M^3_{24} = \frac{\xi_3' (A+B)}{2 \sqrt{b}}, \\
M^3_{25} &= \frac{(B-A)\xi_1' \xi_2 \xi_3}{2 \sqrt{b} (\xi_2^2 + \xi_3^2)}  + \frac{(C-D)\tilde{\xi}_1 \xi_2 \xi_3 }{2(\xi_2^2 + \xi_3^2)}, \\
M^3_{26} &= \frac{(B-A) \xi_1' \xi_3^2}{2 \sqrt{b} (\xi_2^2 + \xi_3^2)} + \frac{(D-C) \tilde{\xi}_1 \xi_2^2}{2(\xi_2^2+ \xi_3^2)}.
\end{align*}
Next,
\begin{align*}
M^3_{31} &= -\frac{a(C+D) \tilde{\xi}_1 \tilde{\xi}_3}{2}, \quad M^3_{32} = -\frac{(A+B) \xi_2 \xi_3}{2(\xi_2^2 + \xi_3^2)} + \frac{b(C+D)\tilde{\xi}_1^2 \xi_2 \xi_3}{2(\xi_2^2 + \xi_3^2)}, \\
M^3_{33} &= \frac{(A+B)\xi_2^2 }{2(\xi_2^2 + \xi_3^2)} + \frac{b(C+D)\tilde{\xi}_1^2 \xi_3^2}{2(\xi_2^2 + \xi_3^2)}, \quad M^3_{34} = \frac{(B-A) \xi_2'}{2 \sqrt{b}} , \\
M^3_{35} &= \frac{(A-B)\xi_1' \xi_2^2}{2 \sqrt{b}(\xi_2^2 + \xi_3^2) } + \frac{(C-D)\tilde{\xi}_1 \xi_3^2}{2(\xi_2^2 + \xi_3^2)} , \quad 
M^3_{36} = \frac{(A-B)\xi_1' \xi_2 \xi_3}{2 \sqrt{b} (\xi_2^2 + \xi_3^2)} + \frac{(D-C)\tilde{\xi}_1 \xi_2 \xi_3}{2(\xi_2^2+ \xi_3^2)}. 
\end{align*}
\begin{align*}
M^3_{41} &= 0, \quad M^3_{42} = \frac{\sqrt{b} (A-B) \xi_3'}{2}, \\
M^3_{43} &= \frac{\sqrt{b} (B-A)\xi_2'}{2}, \quad M^3_{44} = \frac{(A+B) (\xi_2'^2 + \xi_3'^2)}{2} , \\
M^3_{45} &= - \frac{(A+B) \xi_1' \xi_2'}{2} , \quad M^3_{46} = - \frac{(A+B) \xi_1' \xi_3'}{2}.
\end{align*}
\begin{align*}
M^3_{51} &= \frac{a(D-C)}{2} \tilde{\xi}_3, \quad M^3_{52} = \frac{\sqrt{b} (B-A) \xi_1' \xi_2 \xi_3}{2(\xi_2^2 + \xi_3^2)} + \frac{b(C-D) \tilde{\xi}_1 \xi_2 \xi_3}{2 (\xi_2^2 + \xi_3^2)} , \\
M^3_{53} &= \frac{\sqrt{b} (A-B) \xi_1' \xi_2^2}{2 (\xi_2^2 + \xi_3^2)} + \frac{b(C-D) \tilde{\xi}_1 \xi_3^2}{2(\xi_2^2 + \xi_3^2)}, \quad M^3_{54} = -\frac{(A+B) \xi_1' \xi_2'}{2} , \\
M^3_{55} &= \frac{(A+B) \xi_1'^2 \xi_2^2}{2 (\xi_2^2+ \xi_3^2)}  + \frac{(C+D) \xi_3^2}{2( \xi_2^2 +  \xi_3^2)}, \quad M^3_{56} = \frac{(A+B) \xi_1'^2 \xi_2 \xi_3}{2(\xi_2^2 + \xi_3^2)}  - \frac{(C+D) \xi_2 \xi_3}{2(\xi_2^2+ \xi_3^2)} .
\end{align*}
Lastly,
\begin{align*}
M^3_{61} &= \frac{a(C-D) \tilde{\xi}_2 }{2}, \quad M^3_{62} = \frac{\sqrt{b} ( B-A)\xi_1' \xi_3^2 }{2(\xi_2^2 + \xi_3^2)}  + \frac{b(D-C) \tilde{\xi}_1 \xi_2^2}{2(\xi_2^2 + \xi_3^2)}, \\
M^3_{63} &= \frac{\sqrt{b}( A-B) \xi_1' \xi_2 \xi_3}{2(\xi_2^2 + \xi_3^2)} + \frac{b(D-C) \tilde{\xi}_1 \xi_2 \xi_3}{2(\xi_2^2 + \xi_3^2)} , \quad M^3_{64} = - \frac{(A+B) \xi_1' \xi_3' }{2}, \\
M^3_{65} &= \frac{(A+B) \xi_1'^2 \xi_2 \xi_3}{2 (\xi_2^2 + \xi_3^2)} - \frac{(C+D) \xi_2 \xi_3}{2(\xi_2^2 + \xi_3^2)} , \quad M^3_{66} = \frac{(A+B) \xi_1'^2 \xi_3^2}{2 ( \xi_2^2 + \xi_3^2)}  + \frac{(C+D) \xi_2^2}{2( \xi_2^2 + \xi_3^2)}.
\end{align*}
We let moreover
\begin{equation*}
M^3_c = \frac{1}{i \omega}
\begin{pmatrix}
b \tilde{\xi}_1^2  & b \tilde{\xi}_1 \tilde{\xi}_2  & b \tilde{\xi}_1 \tilde{\xi}_3 & 0 & 0 & 0 \\
a \tilde{\xi}_1 \tilde{\xi}_2  & a \tilde{\xi}_2^2  & a \tilde{\xi}_2 \tilde{\xi}_3  & 0 & 0 & 0 \\
a \tilde{\xi}_1 \tilde{\xi}_3  & a \tilde{\xi}_2 \tilde{\xi}_3  & a \tilde{\xi}_3^2  & 0 & 0 & 0 \\
0 & 0 & 0 & \xi_1'^2 & \xi_1' \xi'_2 & \xi_1' \xi'_3 \\
0 & 0 & 0 & \xi'_1 \xi_2' & \xi_2'^2 & \xi_2' \xi'_3 \\
0 & 0 & 0 & \xi'_1 \xi_3' & \xi'_2 \xi_3' & \xi_3'^2 
\end{pmatrix}
.
\end{equation*}

We have the following analog of Proposition \ref{prop:Explicit2d}:
\begin{proposition}
\label{prop:Explicit3d}
Let $d=3$. We find for $\omega \in \C \backslash \R$ and almost all $\xi \in \R^3$
\begin{equation*}
(P(\omega,D)^{-1} u) \widehat (\xi) = (M^3(A,B,C,D) + M^3_c) \hat{u}(\xi)
\end{equation*}
with 
\begin{equation*}
A= \frac{1}{i(\omega - \sqrt{b} \| \xi \|)}, \; B = \frac{1}{i(\omega + \sqrt{b} \| \xi \|)}, \; C= \frac{1}{i(\omega - \| \xi \|_\varepsilon)}, \; D= \frac{1}{i(\omega + \| \xi \|_\varepsilon)}.
\end{equation*}

\medskip

For $\omega > 0$, $\beta \in C^\infty_c(\R^3)$, and $u \in \mathcal{S}(\R^3)^6$, we find
\begin{equation*}
P(\omega \pm i \delta,D)^{-1} \beta(D) u \to P^{loc}_{\pm}(\omega) \beta(D) u \text{ in } (\mathcal{S}'(\R^3))^6
\end{equation*}
with
\begin{equation*}
P_{\pm}^{loc}(\omega) \beta(D) u (x) = \frac{1}{(2 \pi)^3} \int_{\R^3} e^{ix.\xi} ( M^3(A,B,C,D) + M^3_c)
\beta(\xi) \hat{u}(\xi), 
\end{equation*}
where
\begin{equation*}
\begin{split}
A &= \frac{1}{i} \{ v.p. \frac{1}{\omega - \sqrt{b} \| \xi \|} \mp i \pi \delta(\omega - \sqrt{b} \| \xi \|) \}, \; B = \frac{1}{i(\omega + \sqrt{b} \| \xi \|)}, \\
C &= \frac{1}{i} \{ v.p. \frac{1}{\omega -  \| \xi \|_{\varepsilon} } \mp i \pi \delta(\omega -  \| \xi \|_{\varepsilon}) \}, \quad \quad D = \frac{1}{i(\omega +  \| \xi \|_{\varepsilon})}.
\end{split}
\end{equation*}
\end{proposition}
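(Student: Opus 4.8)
The plan is to mirror the proof of Proposition~\ref{prop:Explicit2d}, with Lemma~\ref{lem:DiagonalizationPartiallyAnisotropic} replacing Lemma~\ref{lem:Diagonalization2d}. First I would observe that for $\omega\in\C\backslash\R$ every entry of $d(\omega,\xi)=i\,\mathrm{diag}(\omega,\omega,\omega-\sqrt b\|\xi\|,\omega+\sqrt b\|\xi\|,\omega-\|\xi\|_\varepsilon,\omega+\|\xi\|_\varepsilon)$ is nonzero, since $\sqrt b\|\xi\|$ and $\|\xi\|_\varepsilon$ are real. Hence $p(\omega,\xi)=\tilde m(\xi)\,d(\omega,\xi)\,\tilde m^{-1}(\xi)$ is invertible for almost every $\xi$, and $p^{-1}(\omega,\xi)=\tilde m(\xi)\,d(\omega,\xi)^{-1}\,\tilde m^{-1}(\xi)$ with $d(\omega,\xi)^{-1}=\mathrm{diag}(\tfrac1{i\omega},\tfrac1{i\omega},A,B,C,D)$ for $A,B,C,D$ as in the statement.

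Next I would split $\tilde m^{-1}(\xi)=\tilde m_1(\xi)+\tilde m_2(\xi)$, where $\tilde m_2(\xi)$ retains only the two rows of $\tilde m^{-1}(\xi)$ belonging to the degenerate eigenvalue $i\omega$, namely $(0,0,0,\xi_1',\xi_2',\xi_3')$ and $ab(\tilde\xi_1,\tilde\xi_2,\tilde\xi_3,0,0,0)$, while $\tilde m_1(\xi)$ retains rows $3$ through $6$. These two rows are scalar multiples of $\xi$ paired against the magnetic, respectively the electric, triple, so $\tilde m_2(\xi)\hat v(\xi)$ equals, up to constants, $\hat\rho_m$ and $\hat\rho_e$ in the first two slots; in particular it vanishes on solenoidal currents, which is the point of the split. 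Putting $M^3(A,B,C,D):=\tilde m(\xi)\,d(\omega,\xi)^{-1}\,\tilde m_1(\xi)$ and $M^3_c:=\tilde m(\xi)\,d(\omega,\xi)^{-1}\,\tilde m_2(\xi)$ gives $p^{-1}(\omega,\xi)=M^3(A,B,C,D)+M^3_c$, and what is left is to match the two products with the displayed matrices.

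For $M^3_c$ this is quick: only the two entries $\tfrac1{i\omega}$ of $d(\omega,\xi)^{-1}$ contribute, so $M^3_c=\tfrac1{i\omega}\big(v_1\otimes(0,0,0,\xi_1',\xi_2',\xi_3')+v_2\otimes ab(\tilde\xi_1,\tilde\xi_2,\tilde\xi_3,0,0,0)\big)$ with $v_1=(0,0,0,\xi_1',\xi_2',\xi_3')^t$, $v_2=(\tilde\xi_1/a,\tilde\xi_2/b,\tilde\xi_3/b,0,0,0)^t$ the first two columns of $\tilde m$; expanding these two rank-one matrices gives precisely the block-diagonal $M^3_c$ above. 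For $M^3(A,B,C,D)$ I would expand $\tilde m(\xi)\,d(\omega,\xi)^{-1}\,\tilde m_1(\xi)=\sum_{k=3}^{6}(d^{-1})_{kk}\,(k\text{-th column of }\tilde m)\otimes(k\text{-th row of }\tilde m^{-1})$: columns and rows $3,4$ carry $A,B$ and the $\sqrt b\|\xi\|$-eigenvectors, columns and rows $5,6$ carry $C,D$ and the $\|\xi\|_\varepsilon$-eigenvectors, and the normalizing factor $1/\alpha(\xi)$ in each of these columns of $\tilde m$ cancels the factor $\alpha(\xi)$ in the corresponding row of $\tilde m^{-1}$. Collecting the $36$ entries, with $(\xi_1')^2+(\xi_2')^2+(\xi_3')^2=1$ and the analogue for $\tilde\xi$, reproduces the displayed $M^3(A,B,C,D)$. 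I expect this last matching to be the only real obstacle, and it is purely computational: the main care is bookkeeping the signs attached to $v_3$ versus $v_4$ and to $v_5$ versus $v_6$, so that the differences $A-B$ and $C-D$ appear in the correct off-diagonal positions.

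For the limiting operators I would fix $\omega>0$, $\beta\in C^\infty_c(\R^3)$, $u\in\mathcal S(\R^3)^6$ and note that $\beta(\xi)\hat u(\xi)$ is a fixed smooth compactly supported function, that $\{\sqrt b\|\xi\|=\omega\}$ and $\{\|\xi\|_\varepsilon=\omega\}$ are smooth compact hypersurfaces away from the origin, and that near these sets, off the measure-zero $\xi_1$-axis, the entries of $M^3$ and $M^3_c$ are smooth multiples of the scalar kernels $A,B,C,D,\tfrac1{i\omega}$. Only $A$ and $C$ become singular as $\delta\downarrow0$, so Sokhotsky's formula applied to $\tfrac1{(\omega-\sqrt b\|\xi\|)\pm i\delta}$ and $\tfrac1{(\omega-\|\xi\|_\varepsilon)\pm i\delta}$ yields the stated limits for $A$ and $C$, while $B$, $D$ and the entries of $M^3_c$ converge trivially; the convergence $P(\omega\pm i\delta,D)^{-1}\beta(D)u\to P^{loc}_\pm(\omega)\beta(D)u$ in $(\mathcal S'(\R^3))^6$ then follows exactly as in Proposition~\ref{prop:LocalLAP}.
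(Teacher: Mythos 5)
Your proposal is correct and follows exactly the route the paper intends: it mirrors the proof of Proposition \ref{prop:Explicit2d}, using the diagonalization of Lemma \ref{lem:DiagonalizationPartiallyAnisotropic}, splitting $\tilde m^{-1}$ into the two rows attached to the degenerate eigenvalue $i\omega$ (giving $M^3_c$ as a sum of two rank-one matrices, which indeed reproduces the displayed block matrix and kills solenoidal data) plus the remaining rows (giving $M^3(A,B,C,D)$ after the $\alpha$-normalizations cancel), and then applying Sokhotsky's formula to the factors $A$ and $C$ for the limiting operators. The only part left implicit — checking the $36$ entries of $\tilde m\,d^{-1}\tilde m_1$ against the displayed $M^3$ — is the same routine computation the paper itself omits, and spot checks (e.g.\ the $(1,5)$, $(2,2)$, $(4,4)$ entries) confirm your bookkeeping is right.
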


\section*{Acknowledgements}

Funded by the Deutsche Forschungsgemeinschaft (DFG, German Research Foundation)  Project-ID 258734477 – SFB 1173. I would like to thank Lucrezia Cossetti and Rainer Mandel for helpful discussions about the results and context. Moreover, I am much obliged to the anonymous referees whose insightful comments clearly improved the presentation.

\end{document}